\documentclass[11pt]{amsart}
\usepackage{amsmath,amsfonts,amsthm,mathrsfs}
\usepackage{amssymb}

\usepackage[unicode,bookmarks]{hyperref}
\usepackage[dvipsnames]{xcolor}
\hypersetup{colorlinks=true,citecolor=NavyBlue,linkcolor=Brown,urlcolor=Orange}

\usepackage{tikz-cd,adjustbox}
\usepackage[arrow,curve,matrix]{xy}

\usepackage[margin=1in]{geometry}

\newif\ifdraft
\draftfalse

\counterwithin{equation}{subsection}
\counterwithout{subsection}{section}
\counterwithin{figure}{subsection}

\newtheorem{theorem}[equation]{Theorem}
\newtheorem*{theorem*}{Theorem}
\newtheorem{lemma}[equation]{Lemma}
\newtheorem*{lemma*}{Lemma}
\newtheorem{corollary}[equation]{Corollary}
\newtheorem{proposition}[equation]{Proposition}
\newtheorem*{proposition*}{Proposition}

\theoremstyle{definition}
\newtheorem{definition}[equation]{Definition}
\newtheorem*{definition*}{Definition}
\newtheorem{remark}[equation]{Remark}
\newtheorem*{remark*}{Remark}

\newtheorem{question}[equation]{Question}

\newtheorem{example}[equation]{Example}
\newtheorem*{example*}{Example}
\newtheorem*{problem*}{Problem}

\theoremstyle{plain}

\newcounter{intro}

\theoremstyle{plain}
\newtheorem{intro-conjecture}[intro]{Conjecture}
\newtheorem{intro-corollary}[intro]{Corollary}
\newtheorem{intro-theorem}[intro]{Theorem}
\newtheorem{intro-proposition}[intro]{Proposition}
\theoremstyle{definition}
\newtheorem{intro-definition}[intro]{Definition}
\newtheorem{intro-example}[intro]{Example}
\newtheorem{intro-remark}[intro]{Remark}
\newtheorem{intro-question}[intro]{Question}

\newcommand{\theoremref}[1]{\hyperref[#1]{Theorem~\ref*{#1}}}
\newcommand{\lemmaref}[1]{\hyperref[#1]{Lemma~\ref*{#1}}}
\newcommand{\definitionref}[1]{\hyperref[#1]{Definition~\ref*{#1}}}
\newcommand{\propositionref}[1]{\hyperref[#1]{Proposition~\ref*{#1}}}
\newcommand{\conjectureref}[1]{\hyperref[#1]{Conjecture~\ref*{#1}}}
\newcommand{\corollaryref}[1]{\hyperref[#1]{Corollary~\ref*{#1}}}
\newcommand{\exampleref}[1]{\hyperref[#1]{Example~\ref*{#1}}}

\DeclareMathOperator{\gr}{gr}

\DeclareMathOperator{\dr}{DR}

\DeclareMathOperator{\lcdef}{lcdef}
\DeclareMathOperator{\depth}{depth}

\DeclareMathOperator{\codim}{codim}

\DeclareMathOperator{\pt}{pt}

\DeclareMathOperator{\Sch}{Sch}
\DeclareMathOperator{\Sm}{Sm}
\DeclareMathOperator{\Spc}{Spc}

\makeatletter
\let\old@caption\caption
\renewcommand*{\caption}[1]{%
	\setcounter{figure}{\value{equation}}%
	\stepcounter{equation}%
	\old@caption{#1}\relax%
}
\makeatother

\begin{document}

\title{The Dual Complex and RC-Invariant Functors}

\author{Andrew Burke}
\address{Department of Mathematics, Harvard University, 1 Oxford Street, Cambridge, MA 02138, USA}
\email{aburke@math.harvard.edu}

\subjclass[2010]{14B05, 14D06, 14E30, 14M22}

\date{\today}

\begin{abstract}
We extend the dual complex of a simple normal crossings divisor to a homotopy type invariant $\mathcal{D}(X)$ of arbitrary complex varieties, and we prove that it is contractible when $X$ is connected and of klt type. In fact, the dual complex belongs to a broader class of invariants, which we call RC (rationally connected)-invariant functors. We prove a rigidity theorem governing the behavior of such functors with respect to rationally connected fibrations.
\end{abstract}

\maketitle

\makeatletter
\newcommand\@dotsep{4.5}
\def\@tocline#1#2#3#4#5#6#7{\relax
  \ifnum #1>\c@tocdepth 
  \else
    \par \addpenalty\@secpenalty\addvspace{#2}%
    \begingroup \hyphenpenalty\@M
    \@ifempty{#4}{%
      \@tempdima\csname r@tocindent\number#1\endcsname\relax
    }{%
      \@tempdima#4\relax
    }%
    \parindent\z@ \leftskip#3\relax
    \advance\leftskip\@tempdima\relax
    \rightskip\@pnumwidth plus1em \parfillskip-\@pnumwidth
    #5\leavevmode\hskip-\@tempdima #6\relax
    \leaders\hbox{$\m@th
      \mkern \@dotsep mu\hbox{.}\mkern \@dotsep mu$}\hfill
    \hbox to\@pnumwidth{\@tocpagenum{#7}}\par
    \nobreak
    \endgroup
  \fi}
\def\l@section{\@tocline{1}{0pt}{1pc}{}{\bfseries}}
\def\l@subsection{\@tocline{2}{0pt}{25pt}{5pc}{}}
\makeatother
\tableofcontents

\section{Introduction}

\noindent
{\bf Dual Complex.}
The \emph{dual complex} of a simple normal crossings (snc) divisor $E = \cup E_i$ is a cell complex $\mathcal{D}(E)$ encoding the incidence relations among the irreducible components of $E$. Its vertices correspond to the irreducible components $E_i$, its edges to the connected components of pairwise intersections $E_i \cap E_j$, and more generally its $p$-cells to the connected components of $(p+1)$-fold intersections $E_{i_0} \cap \ldots \cap E_{i_p}$.

The topology of $\mathcal{D}(E)$ is studied in \cite{stepanov, thu, abw, payne, dFKX}, especially when $E$ is the exceptional divisor of a log resolution of singularities. In this case, the homotopy type of $\mathcal{D}(E)$ is a singularity invariant. We start by recording two seminal results from \cite{dFKX} below. All varieties are assumed to be quasi-projective, separated, and of finite type over $\mathbb{C}$. Recall that such a variety $X$ is of \emph{klt type} if there exists a boundary $\mathbb{Q}$-divisor $\Delta$ such that the pair $(X, \Delta)$ is klt. 

\begin{intro-theorem}[{\cite[Theorems 2 and 4]{dFKX}}]\label{intro-thm dFKX}
The following statements hold.
\begin{enumerate}
    \item Let $x \in X$ be a closed point of a variety $X$ of klt type. If $f: Y \rightarrow X$ is a resolution such that $E = f^{-1}(x)_{\mathrm{red}}$ is an snc divisor, then $\mathcal{D}(E)$ is contractible.
    \item Let $f: Y \rightarrow C$ be a dominant projective morphism from a smooth connected variety to a smooth curve whose general fiber is rationally connected. If $x \in C$ is a closed point and $E = f^{-1}(x)_{\mathrm{red}}$ is an snc divisor, then $\mathcal{D}(E)$ is contractible.
\end{enumerate}
\end{intro-theorem}

In this paper, we extend the dual complex construction to arbitrary varieties. More precisely, to every variety $X$ we associate a cell complex $\mathcal{D}(X)$, well-defined up to homotopy equivalence, which agrees with the usual dual complex when $X$ is an snc divisor. We call $\mathcal{D}(X)$ the (generalized) dual complex of $X$. 

The formal definition uses hyperresolutions and is given in \S \ref{subsection exmps}. However, Corollary \ref{cor X subset A} shows that $\mathcal{D}(X)$ can be computed from an embedded resolution. Indeed, suppose that $i: X \hookrightarrow V$ is a closed embedding into a smooth variety and $g: W \rightarrow V$ is a projective birational morphism, with $W$ smooth and $E = g^{-1}(X)_{\mathrm{red}}$ an snc divisor. Then there is a homotopy equivalence
\begin{align}\label{eqn dual resn}
    \mathcal{D}(X) \simeq \mathcal{D}(E),
\end{align}
where $\mathcal{D}(E)$ denotes the usual dual complex of $E$. This formula is a topological analogue of the embedded resolution description of the zeroth Du Bois complex in \cite{schwede}; see also \cite{steenbrink}.

Our main result concerning the dual complex is the following rigidity theorem.

\begin{intro-theorem}\label{intro-thm dual complex}
Suppose $f: Y \rightarrow X$ is a dominant projective morphism between connected varieties of klt type whose general fiber is rationally connected. Then for any closed subvariety $Z \subset X$, there is a homotopy equivalence
\begin{align*}
    \mathcal{D} \big( f^{-1}(Z) \big) \simeq \mathcal{D} \big( Z \big).
\end{align*}
\end{intro-theorem}

Taking $Z = \{ x \}$ recovers both parts of Theorem \ref{intro-thm dFKX} as special cases. On the other hand, applying Theorem \ref{intro-thm dual complex} with $Z = X$ and with $f: Y \rightarrow X$ a resolution of singularities gives the following statement.

\begin{intro-corollary}\label{intro-cor klt}
If $X$ is connected and of klt type, then its dual complex
$\mathcal{D}(X)$ is contractible.
\end{intro-corollary}

The proof of Theorem \ref{intro-thm dual complex} analyzes a series of carefully chosen minimal model programs using the results of \cite{bchm}, and it is inspired by arguments in \cite{dFKX}. The flexibility of our generalized dual complex streamlines the argument and permits a uniform statement.

The generalized dual complex also has a Hodge-theoretic interpretation, extending the well-known case of snc divisors. 

\begin{intro-proposition}\label{intro-prop weight zero}
Let $X$ be a projective variety. For every $i \geq 0$, there is an isomorphism
\begin{align*}
    H^i(\mathcal{D}(X), \mathbb{Q}) \simeq W_0 H^i(X, \mathbb{Q}).
\end{align*}
Consequently:
\begin{enumerate}
    \item If $X$ has rational singularities, then $\mathcal{D}(X)$ is $\mathbb{Q}$-contractible.
    \item If $X$ is equidimensional and $s = \dim \mathrm{Sing}(X)$, then $H^i( \mathcal{D}(X), \mathbb{Q}) = 0$ for all $0 < i < \mathrm{depth} \mathscr{O}_X - s$. In particular, if $X$ has isolated Cohen-Macaulay singularities, then $\mathcal{D}(X)$ is a $\mathbb{Q}$-homology wedge of spheres.
\end{enumerate}
\end{intro-proposition}

By a theorem of Elkik, every variety of klt type has rational singularities. When $X$ is connected and of klt type, we have seen that $\mathcal{D}(X)$ is contractible. The proposition extends a weaker form of this statement to rational singularities and $\mathbb{Q}$-contractibility. Moreover, it is sharp. Indeed, Remark \ref{rmk stepanov} discusses an example, due to \cite{payne}, of a threefold $X$ with rational singularities for which $\mathcal{D}(X) \simeq \Sigma \mathbb{RP}^2$.

Part $(2)$ of the proposition follows from a Lefschetz hyperplane theorem for singular varieties, Theorem \ref{thm lefschetz}, proved in the Appendix. In fact, Theorem \ref{thm lefschetz} gives a stronger vanishing, where we replace $\depth \mathscr{O}_X$ with $\depth \underline{\Omega}_X^0$ for the zeroth Du Bois complex $\underline{\Omega}_X^0$ of $X$. Theorem \ref{thm lefschetz} refines the Lefschetz theorems of \cite[Theorem~E and Theorem~9.2]{parkpopa} according to the depth of Du Bois complexes, and it may be of independent interest.

\noindent
{\bf RC-Invariant Functors.}
Our proof of Theorem \ref{intro-thm dual complex} relies only on a formal invariance property of the dual complex. It therefore applies not only to the dual complex, but also to a more general class of invariants. These invariants are characterized by their compatibility with rational connectedness, and they form the main objects of study in this paper.

\begin{intro-definition}\label{intro-defn P^1 invt}
An \emph{RC-invariant functor} is a functorial assignment $X \mapsto F(X)$ from non-empty, connected, smooth varieties to sets or modules such that, for every dominant projective morphism $f: Y \rightarrow X$ whose general fiber is rationally connected, the induced map 
\begin{align*}
    F(f): F(Y) \rightarrow F(X)
\end{align*} 
is an isomorphism.
\end{intro-definition}

Equivalently, $F(f)$ is an isomorphism whenever $f$ is a model for an MRC fibration. Thus, $F$ is insensitive to rationally connected directions. In particular, if $X$ is rationally connected, then $F(X) \simeq F( \operatorname{Spec}\mathbb{C} )$.

Every RC-invariant functor admits an extension $G = RF$ to arbitrary singular varieties, valued in homotopy equivalence classes of spaces or of chain complexes, respectively; see \S \ref{subsection properties} for details. If $\epsilon_\bullet: X_\bullet \rightarrow X$ is a hyperresolution of $X$, then the extension is defined by the homotopy colimit
\begin{align*}
    G \big( X \big) :=  \operatorname*{hocolim} F \big( X_\bullet \big).
\end{align*}
Alternatively, $G(X)$ can be computed by a formula analogous to \eqref{eqn dual resn}; see Corollary \ref{cor X subset A}.

We actually define RC-invariant functors with values in an arbitrary cocomplete $\infty$-category. However, all of our examples except for the fundamental group $\pi_1(X)$ and the structure sheaf $\mathscr{O}_X$ fit into the format of Definition \ref{intro-defn P^1 invt}, so we ignore this point until the body of the paper.

With this terminology, the generalized dual complex is the extension $\mathcal{D} = R h^0$ of the RC-invariant functor $h^0$ that sends every non-empty, connected, smooth variety to a singleton. Many familiar invariants of varieties define RC-invariant functors.

\begin{intro-example}\label{intro-exmp RC-functor}
The following are RC-invariant functors: $h^0(X)$, $H^0(X, \mathbb{Z})$, $H^1(X, \mathbb{Z})$, $\pi_1(X)$, $CH_0(X)$, $H^j(X, \mathcal{O}_X)$, $\mathcal{O}_X$, $F^2 CH^2(X)_\mathbb{Q}$, and $X(\mathbb{C})/R$. 
\end{intro-example}

The extensions of these familiar RC-invariant functors to arbitrary singular varieties are often related to the more naive extensions by the same formula. For example, 
\[
    H_0\bigl(RCH_0(X)\bigr)\simeq CH_0(X),
    \qquad
    R\mathscr{O}_X\simeq \underline{\Omega}_X^0,
    \qquad\text{and}\qquad
    \pi_1\bigl(R\tau_{\leq 1}(X)\bigr)\simeq \pi_1(X).
\]
The use of the $1$-truncated analytic space in the final comparison is essential for obtaining a clean relationship with fundamental group. See \S \ref{subsection exmps} for precise statements and for further comparisons.

As mentioned earlier, there is a generalization of Theorem \ref{intro-thm dual complex} to RC-invariant functors, and this is our main theorem.

\begin{intro-theorem}\label{intro-theorem main}
Let $F$ be an RC-invariant functor with extension $G = RF$. Suppose $f: Y \rightarrow X$ is a dominant projective morphism between connected varieties of klt type whose general fiber is rationally connected. Then for any closed subvariety $Z \subset X$, there is an equivalence 
\begin{align*}
    G \big( f^{-1}(Z) \big) \simeq G \big( Z \big).
\end{align*}
\end{intro-theorem}

Thus, Theorem~\ref{intro-thm dual complex} follows from Theorem~\ref{intro-theorem main} by taking $F = h^0$.

Let us record several geometric consequences of Theorem \ref{intro-theorem main} arising from the RC-invariant functors listed in Example \ref{intro-exmp RC-functor}. We begin with $R$-equivalence, introduced in \cite{manin}. For a complex variety $X$, let $X(\mathbb{C})/R$ denote the set of \emph{$R$-equivalence classes} of closed points, where two points are equivalent if they are joined by a chain of rational curves. This defines an RC-invariant functor, and the connected components of its extension space are naturally identified with $X(\mathbb{C})/R$ in Proposition \ref{prop R-equ}. Thus, it is connected if and only if $X$ is rationally chain connected. Applying Theorem \ref{intro-theorem main} to this functor gives statement $(1)$ of the corollary below, while the RC-invariant functors $CH_0(X)$, $\mathscr{O}_X$, and $\pi_1(X)$ yield statements $(2)$, $(3)$, and $(4)$, respectively.

\begin{intro-corollary}\label{intro-cor fibers}
Let $f: Y \rightarrow X$ be a dominant projective morphism between connected varieties of klt type whose general fiber is rationally connected. Then every closed fiber $F$ of $f$ has the following properties:
\begin{enumerate}
    \item $F$ is rationally chain connected;
    \item $CH_0(F) \simeq \mathbb{Z}$;
    \item $\mathbb{H}^i(F, \underline{\Omega}_F^0) = 0$ for every $i > 0$; and
    \item $F$ is simply connected.
\end{enumerate}
\end{intro-corollary}

A more general statement concerning inverse images of arbitrary closed subvarieties appears in Corollary \ref{cor preimages}

Statement $(1)$ generalizes \cite[Corollary $1.5$]{hm rat conn}, which establishes rational chain connectedness for the fibers of a resolution of a klt type variety. That result is also an ingredient in the proof of Theorem \ref{intro-theorem main}. When $f$ is equidimensional, $(1)$ follows from \cite[Corollary IV.$3.5$]{rational curves}. The non-equidimensional case appears to be new, even for smooth varieties.

In forthcoming work \cite{burke CH_0}, we show that statements $(2)$ and $(3)$ remain valid under weaker hypotheses on the general fiber. Namely, $(2)$ holds assuming only that the general fiber satisfies $CH_0(F) \simeq \mathbb{Z}$, without requiring rational chain connectedness. Similarly, $(3)$ holds if the general fiber is connected and satisfies $\mathbb{H}^i(F, \underline{\Omega}_F^0) = 0$ for every $i > 0$. By contrast, no analogous statement holds for the fundamental group; see Example \ref{exmp pi_1}.

\noindent
{\bf Geometry of strata.}
Our main Theorem \ref{intro-theorem main} is well-suited to studying global invariants of fibers. However, some applications require finer information about their individual components and strata. In \S \ref{section collapse}, we refine our arguments to capture such information. As an illustration, consider the following question of Koll\'ar.

\begin{intro-question}[{\cite[Question $16$]{ax}}]\label{intro-question}
Let $f: Y \rightarrow C$ be a dominant projective morphism from a smooth connected variety to a smooth curve whose general fiber is rationally connected. If $x \in C$ is a closed point and $E = f^{-1}(x)_{\mathrm{red}}$ is an snc divisor, must $E$ have a rationally connected irreducible component?
\end{intro-question}

In this setting, $E$ is rationally chain connected and its dual complex is contractible. However, these are both global statements, and neither implies that an individual component of $E$ is rationally connected. Hogadi and Xu answered Question \ref{intro-question} affirmatively when $\dim Y \leq 4$ \cite{hogadi xu}, and Brown and Foster proved it in general \cite{brown foster}. We go further by describing the rational connectedness
behavior of all strata of $E$.

\begin{intro-theorem}\label{intro-thm degeneration}
In the setting of Question \ref{intro-question}, the divisor $E$ contains a rationally connected irreducible component $E_0$.

Moreover, the strata of $E$ other than $E_0$ admit an ordered partition into pairs
\[
\big( Z_1,W_1 \big ),\ldots, \big( Z_a, W_a \big)
\]
such that, for every \(i\),
\begin{enumerate}
    \item \(Z_i\) is a divisor in \(W_i\) with $f(Z_i) = f(W_i)$;
    \item $Z_i$ and $W_i$ have birational MRC bases; and
    \item every stratum of \(E\) contained in \(W_i\) is equal to \(Z_j\) or \(W_j\) for some \(j\leq i\).
\end{enumerate}
\end{intro-theorem}

The ordered partition in Theorem \ref{intro-thm degeneration} has a combinatorial interpretation in terms of the dual complex $\mathcal{D}(E)$. Namely, each pair $(Z_i, W_i)$ determines an elementary collapse. The ordering condition ensures that these collapses can be performed successively until only the vertex corresponding to $E_0$ remains. Thus, $\mathcal{D}(E)$ is \emph{collapsible}. This is strictly stronger than the contractibility of $\mathcal{D}(E)$. Our proof is inspired by the study of collapsibility in \cite{dFKX}. The additional geometric content is that the collapse preserves MRC bases. Namely, the two strata in each pair have birational MRC bases, and the final stratum $E_0$ is rationally connected.

Theorem \ref{intro-thm degeneration} is a special case of the more general Theorem \ref{thm cartier collapse}. The latter also has consequences for resolutions of klt singularities.

\begin{intro-theorem}\label{intro-thm collapse}
Let $X$ be a variety of klt type and $Z \subset X$ an irreducible subvariety. Then there exists a resolution $f: Y \rightarrow X$ such that $E = f^{-1}(Z)_{\mathrm{red}}$ is an snc divisor and the following holds. There exists an irreducible component $E_0$ of $E$ such that the map $E_0 \rightarrow Z$ is dominant with rationally connected general fibers, and the remaining strata of $E$ that dominate $Z$ admit an ordered partition into pairs satisfying conditions \textup{(1)}--\textup{(3)} of Theorem \ref{intro-thm degeneration}.

If $X$ is $\mathbb{Q}$-factorial, these properties hold for every resolution $f: Y \rightarrow X$ such that $f^{-1}(Z)_{\mathrm{red}}$ is an snc divisor.
\end{intro-theorem}

When $X$ is not $\mathbb{Q}$-factorial, it remains unclear whether every such resolution satisfies these properties.

\noindent
{\bf Integral weights and negative $K$-theory.}
We conclude with an application of Theorem \ref{intro-theorem main} to the weight filtration on cohomology. Deligne endowed the rational cohomology of every complex variety with a weight filtration, and Gillet-Soulé \cite{GS} constructed an integral refinement. For a projective variety $X$, this can be described explicitly using a hyperresolution. Indeed, a hyperresolution $\epsilon_\bullet: X_\bullet \rightarrow X$ induces a \emph{weight spectral sequence}
\begin{align}\label{intro-eqn weight ss}
    E_1^{p,q} = H^q(X_p, \mathbb{Z}) \Rightarrow H^{p+q}(X, \mathbb{Z}).
\end{align}

After tensoring with $\mathbb{Q}$, the spectral sequence degenerates at the $E_2$-page, and the resulting filtration on $H^\bullet(X,\mathbb Q)$ is Deligne's weight filtration. Over $\mathbb{Z}$, the absence of an integral Hodge filtration means that degeneration at $E_2$ need not occur.  Nevertheless, the spectral sequence defines an integral weight filtration on
\(H^\bullet(X,\mathbb Z)\), with $\gr_q^W H^{p+q}(X, \mathbb{Z})$ a subquotient of $E_2^{p,q}$.

The two lowest rows of the $E_2$-page are computed by the extension cochain complexes of the RC-invariant functors $H^0(X,\mathbb{Z})$ and $H^1(X, \mathbb{Z})$:
\begin{align}\label{intro-eqn E_2}
    E_2^{i,0} \simeq H^i \big( RH^0(X, \mathbb{Z}) \big), \text{ and } E_2^{i,1} \simeq H^i\big( RH^1(X, \mathbb{Z}) \big). 
\end{align}
In fact, the first of these groups is isomorphic to $H^i ( \mathcal{D}(X), \mathbb{Z} )$, giving an integral refinement of Proposition \ref{intro-prop weight zero}. Applying Theorem \ref{intro-theorem main} with $F = H^0$ and $F = H^1$ yields the following restriction on the integral weights for varieties of klt type. 

\begin{intro-corollary}\label{intro-cor integral weight}
Let $X$ be a projective variety of klt type. Then, for every $i \geq 0$,
\[
    H^i(X, \mathbb{Z}) \text{ has weights } \geq \min \{ 2, i\}.
\]
\end{intro-corollary}

It is instructive to compare Corollary \ref{intro-cor integral weight} with the situation for rational singularities. If $X$ is projective and has rational singularities, then by Lemma \ref{lem weight restrictions},
\[
    H^i(X, \mathbb{Q}) \text{ has weights } \geq \min \{ 2, i\}.
\]
Thus, Corollary \ref{intro-cor integral weight} gives an integral refinement of this rational weight vanishing for klt type singularities. 

In joint work with Popa and Shen \cite{bps}, we apply this circle of ideas to the negative $K$-groups of singular varieties. For example, we identify the bottom negative $K$-group of an $n$-dimensional variety $X$ with a term in the weight spectral sequence \ref{intro-eqn weight ss}:
\begin{align*}
    K_{-n}(X) \simeq E_2^{n,0}
\end{align*}
Combining Theorem \ref{intro-theorem main} with \ref{intro-eqn E_2}, we conclude that $K_{-n}(X) = 0$ whenever $X$ is of klt type; see \cite[Theorem $10.1$]{bps}.

The finer analysis of strata underlying Theorem \ref{intro-thm collapse} also provides information about $K_{-n+2}$. More precisely, we use an ad hoc version of that theorem to show that, if $X$ is projective of dimension $n \geq 3$ with isolated singularities of klt type, then
\begin{align*}
    K_{-n+2}(X)_\mathbb{Q} \simeq W_2 H^n(X, \mathbb{Q}).
\end{align*}
See \cite[Theorems $9.9$ and $12.1$]{bps}. Together, these results fit into a broader conjectural framework in which the weight filtration on cohomology governs negative $K$-groups.

\noindent
{\bf Acknowledgements.}
I would like to thank Rob Lazarsfeld, Sung Gi Park, Mihnea Popa, Wanchun Shen, Anda Tenie, Anh Duc Vo, and Chenyang Xu for helpful conversations regarding the contents of this paper.

\section{Preliminaries}

\subsection{Hyperresolutions and extension to singular varieties}

In this section, we review categorical background and state an extension criterion, Theorem \ref{thm extension} from \cite{infinity cat}, which is inspired by the work of \cite{guillen navarro}. The theorem allows us to extend suitable functors defined on smooth varieties to singular varieties. We will later apply it to RC-invariant functors, defined in \S \ref{subsection properties}.

Throughout, all schemes are separated and of finite type over $\mathbb{C}$ and all varieties are quasi-projective over $\mathbb{C}$. Let $\mathrm{Sch}$ denote the category of such schemes and let $\mathrm{Sm} \subset \mathrm{Sch}$ denote the full subcategory of smooth, connected, non-empty varieties. We also require the notion of an abstract blow-up square.

\begin{definition}\label{defn abstract blow up square}
An \emph{abstract blow-up square} is a Cartesian diagram
\begin{equation}\label{eqn abus}
\begin{tikzcd}
E \arrow[r] \arrow[d]
  & Y \arrow[d, "f"] \\
Z \arrow[r, "i"]
  & X,
\end{tikzcd}
\end{equation}
such that $f$ is proper, $i$ is a closed embedding, and $f$ induces an isomorphism $Y \setminus E \simeq X \setminus Z$. We say that \eqref{eqn abus} is a \emph{smooth blow-up square} if $X$ is a smooth variety, $Z \subset X$ is a smooth subvariety, and $Y$ is the blow-up of $X$ along $Z$.
\end{definition}

We may now state the extension theorem using the language of $\infty$-categories. The relevant examples for our applications are the $\infty$-categories $\mathrm{Spc}$ of topological spaces and $\mathcal{D}(R)_{\geq 0}$ of non-negatively graded complexes of $R$-modules, where $R$ is a ring (or a sheaf of rings). Readers unfamiliar with $\infty$-categories may, for intuition, think instead of the corresponding homotopy categories.

\begin{theorem}[{\cite[Theorem $2.3.5$]{infinity cat}}]\label{thm extension}
Let $F: \Sm \rightarrow \mathcal{D}$ be a functor to a cocomplete $\infty$-category $\mathcal{D}$. Suppose that for every smooth blow-up square 
\[
\begin{tikzcd}
E \arrow[r] \arrow[d]
  & Y \arrow[d] \\
Z \arrow[r]
  & X,
\end{tikzcd}
\]
the induced diagram
\[
\begin{tikzcd}
F(E) \arrow[r] \arrow[d]
  & F(Y) \arrow[d] \\
F(Z) \arrow[r]
  & F(X)
\end{tikzcd}
\]
is a pushout square in $\mathcal{D}$.

Then $F$ admits a unique extension $G = RF: \Sch \rightarrow \mathcal{D}$, such that $G(\emptyset)$ is the initial object and every abstract blow-up square is sent to a pushout square in $\mathcal{D}$.

More explicitly, if $X$ is a singular variety and $\epsilon_\bullet: X_\bullet \rightarrow X$ is a cubical hyperresolution, then
\begin{align}\label{eqn colim}
    G(X) \simeq \operatorname*{colim} F(X_\bullet).
\end{align}
\end{theorem}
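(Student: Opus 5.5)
This is Theorem 2.3.5 of \cite{infinity cat}, and I would prove it by left Kan extension along $\Sm \hookrightarrow \Sch$, followed by cdh-style descent. Uniqueness is the easier half. Suppose $G$ is any extension sending abstract blow-up squares to pushouts and $\emptyset$ to the initial object. Then $G(X)$ is determined inductively on $\dim X$. First, if $X = X_1 \cup X_2$ with closed pieces, the square $X_1\cap X_2 \to X_1$, $X_2 \to X$ is an abstract blow-up square. This reduces $G(X)$ to irreducible, and then reduced, varieties, since $X_{\mathrm{red}} \to X$ with $Z=\emptyset$-complement handling gives $G(X_{\mathrm{red}})\simeq G(X)$. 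Second, for integral $X$, a resolution $Y \to X$, isomorphic off a proper closed $Z$ with preimage $E$, gives $G(X)\simeq G(Y)\sqcup_{G(E)} G(Z)$, where $E$ and $Z$ have smaller dimension. Iterating this is precisely the construction of a cubical hyperresolution, so $G(X)\simeq \operatorname{colim} F(X_\bullet)$. That is formula \eqref{eqn colim} and gives uniqueness.

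For existence, I would define $G(X) := \operatorname{colim} F(X_\bullet)$ for a chosen cubical hyperresolution. The key steps, in order, are as follows.

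\begin{enumerate}
\item Show that the colimit is independent of the hyperresolution. Following Guill\'en--Navarro Aznar, any two cubical hyperresolutions are dominated by a third. The comparison maps between them are built from morphisms of smooth varieties that are compositions of smooth blow-ups, via weak factorization of Abramovich--Karu--Matsuki--W{\l}odarczyk. Any two resolutions are connected by a chain of smooth blow-ups and blow-downs. The smooth blow-up hypothesis then makes $F$ send each such comparison to an equivalence after taking colimits over the cube.
\item Check functoriality. Hyperresolutions can be chosen compatibly over morphisms $X' \to X$, and the space of choices is contractible. This yields a functor rather than merely an object-wise assignment. Equivalently, $G$ is the left Kan extension of $F$ restricted to the relevant descent condition, which is automatically functorial.
\item Show that abstract blow-up squares go to pushouts. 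Given such a square, I would build hyperresolutions of $Z$, $E$, and $Y$ compatible with it. A hyperresolution of $X$ can then be assembled from these as an augmented cube of one higher dimension, so that $\operatorname{colim}F(X_\bullet)$ is computed as the pushout of the other three colimits. The point is that colimits commute with colimits.
\end{enumerate}

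The main obstacle is step (1): the independence and coherence of the colimit. There, the smooth blow-up condition must be promoted, via weak factorization and the Guill\'en--Navarro descent criterion, to the statement that $F$ satisfies descent for cubical hyperresolutions. I would handle this first on smooth $X$. There, $\operatorname{colim} F(X_\bullet)\simeq F(X)$ should follow by induction on the dimension of the cube, using that every proper birational morphism between smooth varieties is, up to weak factorization, a zigzag of smooth blow-ups. The singular case then follows formally by the inductive argument above.
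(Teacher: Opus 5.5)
The paper does not prove this statement; it quotes it from \cite[Theorem~2.3.5]{infinity cat}, noting that it is inspired by \cite{guillen navarro}. Your outline is essentially the Guill\'en--Navarro Aznar argument, which is the right set of ideas at the level of the homotopy category $h\mathcal{D}$. For the statement as posed, however, the decisive step is missing.

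\textbf{Coherence.} Choosing a hyperresolution for every object and arrow, and checking that comparison maps become equivalences, gives at best a functor $\Sch\to h\mathcal{D}$. This is what Guill\'en--Navarro obtain, with values in a descent category. But being a pushout square is not detected in $h\mathcal{D}$, and nothing guarantees a lift to $\mathcal{D}$. Your phrase ``the space of choices is contractible'' is exactly the coherence statement that needs proof. It is not even clear how to set it up: hyperresolutions can neither be pulled back nor pushed forward along arbitrary morphisms, and hyperresolving each finite diagram separately does not assemble into a coherent functor. Your fallback ``$G$ is a left Kan extension restricted to the descent condition'' points the right way, but it is not yet an argument. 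Showing that such an extension is excisive and computed by hyperresolutions is the content of the theorem.

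\textbf{Uniqueness.} The same defect undermines your uniqueness argument. Identifying $G(X)\simeq\operatorname*{colim}F(X_\bullet)$ object by object does not give a natural equivalence $G\simeq G'$. The pushout $G(Y)\sqcup_{G(E)}G(Z)$ depends on the maps out of $G(E)$, and your induction fixes these only up to non-canonical equivalence.

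\textbf{A fix.} The standard way out is to make no choices. A functor sending abstract blow-up squares to pushouts and $\emptyset$ to the initial object is a $\mathcal{D}$-valued cosheaf for the topology generated by abstract blow-ups. By resolution of singularities every variety is covered by smooth ones. A comparison-lemma argument then makes restriction to $\Sm$, with the induced topology, an equivalence, which gives existence, functoriality and uniqueness at once. Formula \eqref{eqn colim} then follows from excision applied to the iterated squares that define a cubical hyperresolution.

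\textbf{Step (1).} Weak factorization is what shows that the smooth blow-up hypothesis implies the cosheaf condition on $\Sm$. Concretely, one needs $F(X)\simeq\operatorname*{colim}F(X_\bullet)$ for hyperresolutions of \emph{smooth} $X$. This comes down to $F(Y)\sqcup_{G(E)}G(Z)\simeq F(X)$ for abstract blow-up squares with $X,Y$ smooth but $E,Z$ arbitrary. Your step (1) misdescribes this in two ways.
\begin{enumerate}
\item The components of a morphism between hyperresolutions need not be birational, let alone composites of smooth blow-ups. For instance, after blowing up $X_0$, the preimage of the discriminant acquires positive-dimensional fibres.
\item After factoring $Y\to X$ over $X$ into smooth blow-ups and blow-downs, the intermediate preimages of $Z$ are singular. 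So the verification needs $G$ to be already constructed and excisive on lower-dimensional singular varieties. Existence and the descent property must therefore be proved by a simultaneous induction on $\dim X$, following the pattern of Lemma~\ref{lemma sm blow-up compatibility}. Induction on the dimension of the cube is not the right induction.
\end{enumerate}

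\textbf{A minor slip.} To get $G(X_{\mathrm{red}})\simeq G(X)$, use the abstract blow-up square with $Y=Z=E=X_{\mathrm{red}}$. With $Z=\emptyset$, the condition $Y\setminus E\simeq X\setminus Z$ fails for non-reduced $X$.
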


For example, $G$ is insensitive to non-reduced structure and $G$ commutes with coproducts:
\begin{align}\label{eqn coprod}
    G \big( \coprod X_i\big) \simeq \coprod G \big( X_i \big).
\end{align}

We will recall the notion of a cubical hyperresolution below. For now, let us emphasize that the colimit in \eqref{eqn colim} is a homotopy colimit. We illustrate this point in the $\infty$-category $\mathrm{Spc}$ of spaces by comparing homotopy pushouts with ordinary topological pushouts.

\begin{remark}[{\bf Homotopy pushout}]\label{rmk homotopy pushout}
Suppose that $f: C \rightarrow A$ and $g: C \rightarrow B$ are continuous maps. The \emph{topological pushout} is the quotient space $(A \coprod B)/\sim$, where $f(c) \sim g(c)$ for every $c \in C$. This construction is not homotopy invariant. For example, consider the diagrams
\begin{equation}\label{eqn top pushout}
\begin{tikzcd}
\pt \coprod \pt \arrow[r] \arrow[d] & \pt  \\
\pt  & 
\end{tikzcd}
\qquad\qquad
\begin{tikzcd}
\pt \coprod \pt \arrow[r, "g"] \arrow[d] & I \\
\pt  & 
\end{tikzcd}
\end{equation}
where $I = [0,1]$ and $g$ sends the two points to the endpoints $0$ and $1$. These diagrams are objectwise homotopy equivalent, but their topological pushouts are not. Indeed, the first pushout is a point, whereas the second pushout is the circle $S^1$.

The homotopy pushout fixes this failure. A model for the \emph{homotopy pushout} is the quotient space 
\begin{align*}
    \bigl(A\coprod(C\times I)\coprod B\bigr)\big/\sim
\end{align*}
with relations $(c, 0) \sim f(c)$ and $(c,1) \sim g(c)$ for every $c \in C$. The homotopy pushout of both diagrams in \eqref{eqn top pushout} is $S^1$.
\end{remark}

Let us briefly recall the notions of cubical and semi-simplicial hyperresolutions. Lemma \ref{lem colimit simplicial} will show that the colimit in \eqref{eqn colim} may be computed using either type of hyperresolution. 

To describe the indexing category, for an integer $n \geq 0$, let $\square_n$ denote the category whose objects are the non-empty subsets $J \subset [n] := \{ 0 , \ldots, n\}$, with a unique morphism $J \rightarrow J'$ whenever $J \subset J'$. A \emph{cubical hyperresolution} of a variety $X$ consists of a functor
\[
    X_\bullet: \square_n^{\mathrm{op}} \rightarrow \mathrm{Sm}
\]
for some $n \geq 0$, together with compatible morphisms $\epsilon_\bullet: X_\bullet \rightarrow X$ of each piece $X_J$ to $X$. This data is required to arise from an iterated system of abstract blow-up squares resolving $X$. We refer to \cite{gnpp} and \cite[Chapter $5$]{peters steenbrink} for the precise definition and for an algorithm to construct a cubical hyperresolution. Every variety $X$ admits a cubical hyperresolution satisfying the dimension bound $\dim X_J \leq \dim X - |J|+1$ for every $J \subset [n]$.

A cubical hyperresolution $\epsilon_\bullet: X_\bullet \rightarrow X$ canonically gives rise to a semi-simplicial hyperresolution. To describe ths construction, let $\Delta_n$ denote the category whose objects are the ordered sets $[k] = \{ 0, \ldots , k \}$ for $0 \leq k \leq n$, and whose morphisms are strictly increasing maps. A \emph{semi-simplicial hyperresolution} of a variety $X$ consists of a functor 
\[
    Y_\bullet: \Delta_n^{\mathrm{op}} \rightarrow \Sm,
\]
for some $n \geq 0$, together with compatible morphisms $\delta_\bullet: Y_\bullet \rightarrow X$ of each piece $Y_k$ to $X$.

The semi-simplicial hyperresolution associated to $\epsilon_\bullet$ has terms
\[
Y_k=\coprod_{|J|=k+1} X_J,
\qquad 0\leq k\leq n.
\]
See \cite[Remark $5.4$]{peters steenbrink} for details. From a categorical perspective, $Y_\bullet$ is the left Kan extension of $X_\bullet$ along the natural functor $u: \square_n^{\mathrm{op}} \rightarrow \Delta_n^{\mathrm{op}}$ sending $J$ to $[|J|-1]$. This latter description is useful for the proof of Lemma \ref{lem colimit simplicial}.

\begin{example}[{\bf Snc hyperresolution}]\label{exmp snc hyperres}
An snc divisor $E = \bigcup_{i=0}^n E_i$ carries a standard cubical hyperresolution whose terms are the strata $E_J := \bigcap_{j \in J} E_j$ for all $J \subset [n]$.

If we set $E^{[k]} := \coprod_{|J| = k+1} E_J$, then the associated semi-simplicial resolution has the form
\begin{align*}
     \bigg(  \ldots E^{[2]}
    \mathrel{\substack{\rightarrow\\[-0.6ex]\rightarrow\\[-0.6ex]\rightarrow}}
    E^{[1]} \rightrightarrows E^{[0]} \bigg) \rightarrow E.
\end{align*}
\end{example}

The following lemma shows that the colimit appearing in \eqref{eqn colim} may be computed either from a cubical hyperresolution or from its associated semi-simplicial hyperresolution. This flexibility is useful in applications.

\begin{lemma}\label{lem colimit simplicial}
Let $\epsilon_\bullet: X_\bullet \rightarrow X$ be a cubical hyperresolution, with associated semi-simplicial hyperresolution $\delta_\bullet: Y_\bullet \rightarrow X$. If $F: \mathrm{Sm} \rightarrow \mathcal{D}$ is a functor to a cocomplete $\infty$-category $\mathcal{D}$, then there is a natural equivalence
\[
    \operatorname*{colim}_{\square_n^{\mathrm{op}}} F(X_\bullet)
    \simeq
    \operatorname*{colim}_{\Delta_n^{\mathrm{op}}} F(Y_\bullet).
\]
\end{lemma}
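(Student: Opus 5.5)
The plan is to use the categorical description of $Y_\bullet$ given just before the lemma: $Y_\bullet$ is the left Kan extension of $X_\bullet$ along $u: \square_n^{\mathrm{op}} \rightarrow \Delta_n^{\mathrm{op}}$, $J \mapsto [|J|-1]$. The basic tool is transitivity of colimits. For any diagram $D: \square_n^{\mathrm{op}} \rightarrow \mathcal{D}$ into a cocomplete $\infty$-category, the left Kan extension $u_! D$ exists and
\[
\operatorname*{colim}_{\Delta_n^{\mathrm{op}}} u_! D \simeq \operatorname*{colim}_{\square_n^{\mathrm{op}}} D,
\]
because both sides corepresent the functor $c \mapsto \mathrm{Map}(D, \mathrm{const}_c) \simeq \mathrm{Map}(u_!D, \mathrm{const}_c)$. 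The lemma therefore reduces to an identification
\[
u_!\bigl(F \circ X_\bullet\bigr) \simeq F(Y_\bullet).
\]

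To get this identification, first compute $u_!$ pointwise. For $[k] \in \Delta_n^{\mathrm{op}}$,
\[
(u_! D)([k]) \simeq \operatorname*{colim}_{(J,\ \alpha: u(J) \rightarrow [k])} D(J),
\]
where $\alpha$ ranges over morphisms $[|J|-1] \rightarrow [k]$ in $\Delta_n^{\mathrm{op}}$, that is, strictly increasing maps $[k] \rightarrow [|J|-1]$. The key step is to show that the comma category $u \downarrow [k]$ has as cofinal subcategory the discrete set of pairs $(J, \mathrm{id})$ with $|J| = k+1$. Given an object $(J', \alpha)$, the strictly increasing map $[k] \rightarrow [|J'|-1]$ picks out a unique $(k+1)$-element subset $J \subset J'$. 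This yields a morphism $J' \rightarrow J$ in $\square_n^{\mathrm{op}}$ compatible with the structure maps, and it is the unique map from $(J',\alpha)$ to an object of the discrete subcategory. So each object admits a terminal map into exactly one point of that subcategory, the relevant slice is contractible, and the inclusion is cofinal. Hence $(u_!D)([k]) \simeq \coprod_{|J|=k+1} D(J)$, with face maps induced by the inclusions of subsets.

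The last step is to compare this with $F(Y_k) = F\bigl(\coprod_{|J|=k+1} X_J\bigr)$. Here one subtlety arises: $F$ is defined only on connected smooth varieties, so $F$ of a disjoint union must be interpreted. I would adopt the convention, implicit in \eqref{eqn coprod} and in \cite{peters steenbrink}, that $F$ is extended to disconnected smooth varieties additively, $F(\coprod X_i) := \coprod F(X_i)$. Equivalently, one regards $Y_\bullet$ as a diagram indexed by the Grothendieck construction of the components. With this convention the identification holds term by term, and it is compatible with face maps by construction, giving $u_! F(X_\bullet) \simeq F(Y_\bullet)$. Naturality in $X_\bullet$ and $F$ follows because all identifications come from universal properties.

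I expect the main obstacle to be the cofinality computation. One must correctly check that, in the opposite categories, the comma category $u \downarrow [k]$ decomposes as a disjoint union of categories, each with a terminal object $(J,\mathrm{id})$, rather than merely having a cofinal discrete subset in some weaker sense. I would verify this by writing morphisms in $\square_n$ and $\Delta_n$ explicitly and confirming the uniqueness of the factorization through the subset determined by the image of $[k] \rightarrow [|J'|-1]$.
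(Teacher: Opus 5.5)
Your proposal is correct and follows the paper's route. Like the paper, you write the colimit over $\square_n^{\mathrm{op}}$ as the left Kan extension along $u$ followed by the colimit over $\Delta_n^{\mathrm{op}}$, and you identify $u_!(F\circ X_\bullet)$ with $F(Y_\bullet)$; your pointwise cofinality check (each component of $u\downarrow[k]$ has terminal object $(J,\mathrm{id})$ with $|J|=k+1$) and your convention that $F$ extends to disconnected smooth varieties by coproducts supply the justification for that identification, which the paper only asserts.
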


\begin{proof}
One can prove this by a direct computation. Alternatively, it follows formally from the functoriality of left Kan extensions. Indeed, consider the diagram
\[
\begin{tikzcd}[column sep=4.5em, row sep=3em]
\Box_n^{\mathrm{op}}
  \arrow[r, "X_\bullet"]
  \arrow[d, "u"']
  \arrow[dd, bend right=55, "p"']
&
\mathrm{Sm}
  \arrow[r, "F"]
&
\mathcal{D}
\\
\Delta_n^{\mathrm{op}}
  \arrow[ur, "Y_\bullet"']
  \arrow[d, "q"']
\\
*
\end{tikzcd}
\]
The colimit $\operatorname*{colim} F(X_\bullet)$ is the left Kan extension of $F \circ X_\bullet$ along the functor $p$ to the terminal category $*$ by \cite[Proposition $6.5.1$]{riehl}. But $p$ factors as $q \circ u$. The left Kan extension of $F \circ X_\bullet$ along $u$ is $F \circ Y_\bullet$, and the left Kan extension of $F \circ Y_\bullet$ along $q$ is the colimit $\operatorname*{colim} F(Y_\bullet)$, again by \cite[Proposition $6.5.1$]{riehl}. Our claim therefore follows from the composition rule for left Kan extensions \cite[Exercise $6.1$iii]{riehl}.
\end{proof}

\subsection{Integral weight filtrations}

In this section, we discuss the weight filtration defined on the integral cohomology of a projective variety introduced by \cite{GS}. We follow the treatment of \cite[\S 2]{bps}.

Let $X$ be a projective variety, and choose a cubical hyperresolution $\epsilon_\bullet: X_\bullet \rightarrow X$. By cohomological descent, the constant sheaf $\mathbb{Z}_X$ is quasi-isomorphic to the total complex associated to the derived pushforwards
\begin{align}\label{eqn hyperres cone}
    \mathbb{Z}_X \simeq \bigg( R\epsilon_{0*} \mathbb{Z}_{X_0} \rightarrow R\epsilon_{1*} \mathbb{Z}_{X_1} \rightarrow \ldots \bigg).
\end{align}

Let us first review the situation after tensoring with $\mathbb{Q}$. We obtain from \eqref{eqn hyperres cone} a \emph{weight spectral sequence}
\begin{align}\label{eqn ratl weight ss}
    E_1^{p,q} = H^q(X_p, \mathbb{Q}) \Rightarrow H^{p+q}(X, \mathbb{Q}).
\end{align}
Since $X_p$ is smooth and projective, the terms $E_1^{p,q}$ naturally carry pure Hodge structures of weight $q$, and the boundary maps $d_1$ respect the Hodge filtration. It follows that the subquotients $E_2^{p,q}$ are pure of weight $q$ as well. The boundary maps $d_2: E_2^{p,q} \rightarrow E_2^{p+2, q-1}$ are between pure Hodge structures of different weight and they preserve the Hodge filtration. Therefore, they all vanish. The same argument shows the boundary maps $d_r$ for $r \geq 2$ vanish as well. This shows that the spectral sequence degenerates at the $E_2$ page. The filtration induced on $H^{p+q}(X, \mathbb{Q})$ is, by definition, Deligne's weight filtration.

Let us work now with integral coefficients. The iterated cone \eqref{eqn hyperres cone} again gives rise to a cohomology spectral sequence
\begin{align}\label{eqn weight ss}
    E_1^{p,q} = H^q(X_p, \mathbb{Z}) \Rightarrow H^{p+q}(X, \mathbb{Z}).
\end{align}
This time, given the lack of a Hodge filtration, the spectral sequence need not degenerate. Nevertheless, it is an invariant of $X$.

\begin{proposition}[{\cite[Theorem $3$]{GS}}]\label{prop integral weight}
Let $X$ be a complex projective variety. The spectral sequence \eqref{eqn weight ss} is an invariant of $X$, from the $E_2$-page onwards. It defines a weight filtration $W_\ell H^i(X, \mathbb{Z})$ on the cohomology of $X$ with integral coefficients, which agrees with Deligne's weight filtration after tensoring with $\mathbb{Q}$.
\end{proposition}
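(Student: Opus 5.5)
The plan is to deduce invariance from the $E_2$-page onward from the universal property in Theorem \ref{thm extension}, applied to a cochain-valued functor on smooth projective varieties. Let $C^\bullet(-,\mathbb{Z})$ denote singular cochains, or $R\Gamma(-,\mathbb{Z})$. Consider the functor $X \mapsto R\Gamma(X,\mathbb{Z})$ together with its canonical (Postnikov, or truncation) filtration $\tau_{\leq q}$. This gives a functor to filtered complexes, with contravariance handled by passing to opposite categories or by working with the dual chains.

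A cleaner route is to use chains. Send a smooth projective $X$ to $C_\bullet(X,\mathbb{Z})$, filtered by the canonical truncation $\tau_{\geq q}$, viewed in the $\infty$-category of filtered complexes, which is cocomplete. I will check the hypothesis of Theorem \ref{thm extension} for a smooth blow-up square $E \to Y$, $Z \to X$ with projective pieces. By the blow-up formula, $H_*(Y) \simeq H_*(X) \oplus \ker\bigl(H_*(E)\to H_*(Z)\bigr)$, and $H_*(E)\to H_*(Z)\oplus H_*(Y)$ is injective. Consequently the square of homology groups is a split pushout in each degree $q$. A short exact Mayer--Vietoris sequence in each degree means each graded piece $\gr_q = H_q[q]$ gives a pushout square, so the filtered square is a pushout.

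Theorem \ref{thm extension} is stated for all of $\Sm$. I would either restrict to the projective setting, where hyperresolutions of projective varieties can be chosen with projective terms, or note that the same blow-up formula holds for quasi-projective smooth varieties. With this in hand, Theorem \ref{thm extension} yields an extension $G(X)$ in filtered complexes that is independent of the hyperresolution, and $G(X) \simeq \operatorname{colim} F(X_\bullet)$ computed using Lemma \ref{lem colimit simplicial}.

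Next I identify the spectral sequence of this filtered colimit. For the semi-simplicial hyperresolution, the colimit is the totalization. Its filtration-induced spectral sequence has an $E_1$-page given by $H_q(X_p)$ after reindexing. The resulting $E_1$ differs from the weight spectral sequence, but after décalage (Deligne's shift from the canonical to the filtration bête indexing) the $E_1$ of the filtered colimit is the $E_2$ of \eqref{eqn weight ss}. Dualizing via universal coefficients, or running the same argument with cochains in the opposite category, gives cohomology.

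The main obstacle is this décalage identification: one must check that the spectral sequence of $\operatorname{colim}$ of the $\tau$-filtered objects is the décalée of the hyperresolution spectral sequence, so that invariance of the former from $E_1$ gives invariance of \eqref{eqn weight ss} from $E_2$. I would verify it on the simplicial totalization directly, since $\gr_q$ of the colimit is the complex $(H_q(X_p))_p$, whose homology is $E_2^{p,q}$. For independence of choices of maps I would use functoriality of hyperresolutions under refinement.

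Finally, rationally the filtration agrees with Deligne's weight filtration, since both arise from the same hyperresolution spectral sequence \eqref{eqn ratl weight ss}, which degenerates at $E_2$.
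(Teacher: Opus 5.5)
Your argument is correct, and it takes a different route from the source. The paper gives no proof of this proposition; it quotes \cite[Theorem~3]{GS}. There, roughly, invariance comes from the weight complex of Chow motives: two hyperresolutions are compared through a common refinement. The induced maps on the $E_1$-rows $H^q(X_\bullet,\mathbb{Z})$ are homotopy equivalences because they realize homotopy equivalences of complexes of motives, so they are isomorphisms from $E_2$ on.

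You instead apply Theorem~\ref{thm extension} to $X\mapsto (C^\bullet(X,\mathbb{Z}),\tau^{\le q})$. The only geometric input is that, for a smooth blow-up square, the integral sequences $0\to H^q(X)\to H^q(Y)\oplus H^q(Z)\to H^q(E)\to 0$ are split exact. This also holds for non-proper smooth $X$, since $f^*$ is split by proper pushforward and $H^q(Z)\to H^q(E)$ by the projective bundle formula. Independence of the hyperresolution is then packaged into the uniqueness of the extension, with the comparison maps $\operatorname{colim}F(X_\bullet)\to G(X)$ induced by the augmentations. So you do not need the refinement argument you mention.

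Your route gives more than the statement. The whole $\tau$-filtered object is well defined up to filtered quasi-isomorphism and is functorial for all morphisms. Its graded pieces are the extensions of $H^q(-,\mathbb{Z})$, which recovers the identification of the rows $q=0,1$ with $RH^0, RH^1$ in Proposition~\ref{prop H^i}. The Gillet--Soul\'e route instead yields a motivic invariant that realizes in other cohomology theories and is defined over any field of characteristic zero. It also treats compactly supported cohomology of non-proper varieties, which your functor does not.

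Two points should be tightened.

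First, computing $\gr_q$ of the colimit only matches the $E_1$-page of the $\tau$-filtration with the $E_2$-page of \eqref{eqn weight ss}. For the higher differentials and the abutment filtration you need the full d\'ecalage statement, and it holds on the nose. On the totalization of $C^\bullet(X_\bullet,\mathbb{Z})$, the subcomplex $\operatorname{Tot}(\tau^{\le q})$ consists of the elements of internal degree $\le q$ whose degree-$q$ component is a cocycle. This is exactly Deligne's $\mathrm{Dec}(L)^{-q}$ for the column filtration $L$. Deligne's lemma gives $E_r(\mathrm{Dec}\,L)\cong E_{r+1}(L)$ for $r\ge 1$ (up to reindexing), together with $\mathrm{Dec}(L)^{-q}H^i=L^{i-q}H^i$. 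This yields the invariance of \eqref{eqn weight ss} from $E_2$ on and the formula $W_qH^i(X,\mathbb{Z})=L^{i-q}H^i(X,\mathbb{Z})$.

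Second, discard the option of dualizing the chain-level result by universal coefficients. Over $\mathbb{Z}$ the dual of a $\tau$-filtered complex is not $\tau$-filtered, since $R\mathrm{Hom}(H_q[q],\mathbb{Z})$ has an $\mathrm{Ext}$ term in the adjacent degree, so the rows would mix. Run the cochain version directly instead, as a functor to the cocomplete $\infty$-category $\mathrm{Fil}(\mathcal{D}(\mathbb{Z}))^{\mathrm{op}}$ sending smooth blow-up squares to pullbacks.
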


\begin{remark}\label{rmk non-projective}
There is an integral weight filtration for arbitrary complex varieties $X$; see \cite[Theorem $3$]{GS}. When $X$ is not projective the construction is less explicit, and we will not use it here.
\end{remark}

We will use the notation
\begin{align*}
    H_q^{p+q}(X, \mathbb{Z}) = E_2^{p,q}.
\end{align*}
The weight piece $\gr_q^W H^{p+q}(X, \mathbb{Z})$ is a subquotient of $H_q^{p+q}(X, \mathbb{Z})$ and these groups coincide after tensoring with $\mathbb{Q}$. 

The weight filtration on integral cohomology carries nontrivial information about the singularities of $X$, even on torsion subgroups, as the following example illustrates.

\begin{example}[{\cite[Proposition $23.2$]{bps}}]\label{example kummer}
Let $A$ be an abelian variety of dimension $g$ and let $X = A/\iota$ be the Kummer variety, where $\iota: x \mapsto -x$ is the negation involution. Then
\[
H^{i}(X,\mathbb{Z}) \;\cong\;
\begin{cases}
\mathbb{Z}^{\binom{2g}{\,i\,}}, 
& \text{if } 0 \le i \le 2g \text{ and } i \text{ is even},\\[6pt]
(\mathbb{Z}/2\mathbb{Z})^{\big(\sum_{j=0}^{2g-i} \binom{2g}{j}\big)}, 
& \text{if } 3 \le i \le 2g-1 \text{ and } i \text{ is odd},\\[6pt]
0, 
& \text{otherwise.}
\end{cases}
\]
When $i$ is even, $H^i(X, \mathbb{Z})$ is pure of weight $i$, and when $i$ is odd, $H^i(X, \mathbb{Z})$ is pure of weight $i-1$.
\end{example}

\subsection{Rational connectedness and MRC fibrations}

We recall the notions of rational connectedness and rational chain connectedness, introduced independently by \cite{kmm} and \cite{campana}, and we record the properties needed below.

\begin{definition}
Let $X$ be a variety. We say that $X$ is \emph{rationally chain connected} if any two points in a dense open subset can be joined by a connected chain of rational curves.

We say that $X$ is \emph{rationally connected} if any two points in a dense open subset lie on a rational curve.
\end{definition}

Rational connectedness is a birational invariant of irreducible varieties, including singular ones. Every rational variety is therefore rationally connected, although the converse is far from true. Nevertheless, rational connectedness is often more tractable than rationality and it is better behaved. 

Moreover, rational connectedness implies rational chain connectedness. For smooth projective varieties, the converse holds by \cite[Theorem $2.1$]{kmm}, but it can fail for singular varieties. For example, a cone over an elliptic curve is rationally chain connected, since any two points can be joined by a chain of two lines passing through the vertex, but it is not rationally connected.

\begin{theorem}[{\cite[Corollaries~$1.5$ and $1.6$]{hm rat conn}}]\label{thm hm}
Let \((X,\Delta)\) be a klt pair.
\begin{enumerate}
    \item If $f: Y \rightarrow X$ is a projective birational morphism, then every fiber of $f$ is rationally chain connected. In particular, if $X$ is rationally chain connected, then $Y$ is rationally chain connected as well.
    \item If $g: X \rightarrow S$ is a dominant projective morphism with connected fibers and $-(K_X + \Delta )$ is $g$-ample, then every fiber of $g$ is rationally chain connected.
\end{enumerate}
\end{theorem}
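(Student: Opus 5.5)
The plan is to prove part (2) first by a specialization argument, and then deduce part (1) by running a relative minimal model program over $X$, using (2) at each step.

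\textbf{Part (2).} Since $g$ is projective with connected fibers, $-(K_X+\Delta)$ is $g$-ample and $(X,\Delta)$ is klt, the general fiber $X_s$ with $\Delta_s=\Delta|_{X_s}$ is a klt log Fano pair. By Zhang's theorem (or Hacon–McKernan's own MMP argument), $X_s$ is rationally connected, in particular rationally chain connected. Rational chain connectedness is preserved under specialization in proper families (Koll\'ar, \emph{Rational curves}, IV.3.5). The idea is that chains of rational curves joining two general points of the general fiber form a family of bounded degree, whose limits in special fibers are again connected chains of rational curves. Since every fiber is connected, this gives that every closed fiber is rationally chain connected.

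\textbf{Part (1): reductions and setting up the MMP.}
\begin{enumerate}
\item If $Y'\to Y$ is a further resolution, then fibers of $Y'\to X$ surject onto fibers of $Y\to X$, and images of rationally chain connected varieties are rationally chain connected. So we may assume $Y$ is a log resolution of $(X,\Delta)$.
\item Fibers are connected by Zariski's main theorem, since $X$ is normal.
\item Write $K_Y+\Delta_Y=f^*(K_X+\Delta)+E$, where $\Delta_Y$ is the strict transform of $\Delta$ plus $(1-\epsilon)$ times the exceptional divisors. Then $(Y,\Delta_Y)$ is klt and $E\ge 0$ is exceptional with support equal to the whole exceptional locus.
\item Run a $(K_Y+\Delta_Y)$-MMP over $X$ with scaling. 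By \cite{bchm} it terminates. By the negativity lemma it contracts all of $E$, so it ends at a small modification of $X$ over which $E=0$, hence at $X$ itself when $X$ is $\mathbb{Q}$-factorial. In general one ends at a small $\mathbb{Q}$-factorial model $X'\to X$, itself a relative Fano contraction, which is handled by (2).
\end{enumerate}
We obtain a sequence $Y=Y_0\dashrightarrow Y_1\dashrightarrow\cdots\dashrightarrow Y_m\to X$. Each step is the contraction $\phi_i\colon Y_i\to Z_i$ of a $(K+\Delta)$-negative extremal ray, followed by a flip if the contraction is small. By (2), applied to $\phi_i$ with the klt pair $(Y_i,\Delta_{Y_i})$, all fibers of $\phi_i$ are rationally chain connected.

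\textbf{Part (1): descending induction.} We prove by descending induction on $i$ that the fibers of $Y_i\to X$ are rationally chain connected. Let $F_i$ denote the fiber over $x$.
\begin{enumerate}
\item The fiber of $Z_i$ over $x$ is the image of $F_{i+1}$, or of $F_i$'s image in the divisorial case, so it is rationally chain connected by induction.
\item The map $F_i\to (Z_i)_x$ is surjective with rationally chain connected fibers.
\item What remains is a lifting statement: a proper surjection with rationally chain connected fibers onto a rationally chain connected base has rationally chain connected source.
\end{enumerate}

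\textbf{Main obstacle: the lifting statement.} This is the step I expect to be hardest, since it is false in general for singular fibers. The approach I would try first uses that $\phi_i$ is a relative log Fano contraction. Given a rational curve $C\subset Z_i$, pass to the normalization of $\phi_i^{-1}(C)$, or of a suitable component dominating $C$. By adjunction and a perturbation of the boundary, its general fiber over $\mathbb{P}^1$ carries a klt log Fano structure and is therefore rationally connected. Graber–Harris–Starr then yields a rational curve in $Y_i$ mapping onto $C$. Any two points in a fiber of $\phi_i$ are joined by chains by (2). Combining these lifted curves with such fiber chains links any two points of $F_i$ by a chain of rational curves. Making the adjunction step precise, so that general fibers over $C$ remain klt log Fano, is where I expect the real work, following Hacon–McKernan.
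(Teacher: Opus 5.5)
There is a genuine gap, and it lies in part (2). It is more basic than the lifting step you flagged. Your argument uses the hypotheses (klt, $-(K_X+\Delta)$ $g$-ample) only to make the general fiber rationally connected, and then appeals to specialization. But Koll\'ar's specialization result (\emph{Rational Curves}, IV.3.5) is a statement about equidimensional (e.g.\ flat) families: limits of rational chains in nearby fibers are rational chains in the limit of those fibers. When $g$ is not equidimensional, the special fiber has components that are not limits of general fibers, and connectedness of fibers does not help.

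For example, let $\widetilde S\to S$ be the blow-up of the affine cone $S$ over a smooth plane cubic $E$ at its vertex. This is a projective birational morphism from a smooth surface, with connected fibers and general fiber a point, yet its fiber over the vertex is the elliptic curve $E$. Your argument applies verbatim here, since a point is trivially rationally connected, so it cannot be correct as stated. What it actually proves is (2) for equidimensional $g$. The substantive cases are birational contractions and fibrations whose fiber dimension jumps, and there the klt and relative anti-ampleness hypotheses must be used on the special fiber itself. In the example, the elliptic fiber is possible precisely because $K_{\widetilde S}\cdot E=3>0$.

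This also removes the foundation of part (1). The only input you take from (2) is that every fiber of each extremal contraction $\phi_i$ is rationally chain connected. But the $\phi_i$ are divisorial or flipping contractions, and the final small model over $X$ is birational too. Their general fiber is a point, so the specialization argument says nothing about them.

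The lifting step is open for a related reason. If $C\subset \phi_i(\mathrm{Ex}(\phi_i))$, the general fibers of $\phi_i^{-1}(C)\to C$ are positive-dimensional exceptional fibers of $\phi_i$, in general reducible and non-normal. So $\phi_i^{-1}(C)$ carries no adjunction structure, and Graber--Harris--Starr does not apply as stated.

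The paper does not prove this theorem; it quotes Hacon--McKernan. They avoid specialization altogether by proving Shokurov's stronger conjecture: if $-(K_X+\Delta)$ is big and nef over $S$ and $Y\to X$ is birational, then every fiber of $Y\to S$ is rationally chain connected modulo the preimage of the non-klt locus. That relative, ``modulo the non-klt locus'' formulation is the missing idea. It lets one handle special fibers directly, through the non-klt locus and induction on dimension, instead of treating them as limits of general fibers.
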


We will compare varieties using their maximal rationally connected fibrations. This is a construction which extracts the non-rationally-connected part of a variety.

\begin{proposition}[{\cite[Theorem~IV.5.2]{rational curves}}]\label{prop mrc}
Let $X$ be a connected smooth projective variety. There exists a smooth projective variety $Z$, dense open sets $X_0 \subset X$ and $Z_0 \subset Z$, and a dominant rational map $\mu: X \dashrightarrow Z$ such that:
\begin{enumerate}
    \item the restriction $\mu|_{X_0}: X_0 \rightarrow Z_0$ is a proper morphism;
    \item the fibers of $\mu|_{X_0}$ are rationally connected;
    \item for a very general point $z \in Z$, any rational curve in $X$ meeting the fiber $X_z$ is contained in $X_z$; and
    \item $Z$ is not uniruled \cite[Corollary $1.4$]{ghs} .
\end{enumerate}
The rational map $\mu$ is called the \emph{maximal rationally connected fibration} or \emph{MRC fibration} of $X$, and $Z$ is called the \emph{MRC base}. Its birational equivalence class is uniquely determined by the birational type of $X$.
\end{proposition}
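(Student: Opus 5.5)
The statement is quoted from \cite[Theorem~IV.5.2]{rational curves}, so my plan follows the standard construction of the rationally connected quotient due to Campana and Koll\'ar--Miyaoka--Mori \cite{kmm, campana}, with \cite{ghs} supplying item (4).

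\textbf{Step 1: the equivalence relation.} The space of rational curves on $X$ (the normalization of the relevant part of $\operatorname{Hom}(\mathbb{P}^1, X)$ or of the Chow variety) has countably many irreducible components. For a finite collection $\mathcal{F}$ of such families, declare $x \sim_{\mathcal{F}} y$ when $x$ and $y$ are joined by a connected chain of curves from $\mathcal{F}$. Chains of length at most $m$ form a constructible relation. Since the dimension of the equivalence class through a general point is bounded by $\dim X$, it stabilizes after finitely many families and finitely many chain lengths. I choose $\mathcal{F}$ so that the class through a general point has maximal dimension among all finite collections.

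\textbf{Step 2: the quotient.} Apply Koll\'ar's quotient theorem for proper prerelations, as in \cite[IV.4]{rational curves}, to obtain a dense open $X_0 \subset X$, a variety $Z_0$, and a proper morphism $X_0 \to Z_0$ whose fibers are exactly the $\sim_{\mathcal{F}}$-classes. Shrinking $Z_0$, the fibers are smooth, and they are rationally chain connected by construction. Their closures can be resolved, and rational chain connectedness survives on a suitable open part. By \cite[Theorem~2.1]{kmm}, chain connectedness and connectedness agree for smooth projective varieties, so the general fibers are rationally connected. Compactifying $Z_0$ and resolving singularities gives a smooth projective $Z$ together with the rational map $\mu$. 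This yields (1) and (2).

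\textbf{Step 3: maximality (3).} Suppose a family $\mathcal{G}$ of rational curves contained a member meeting a general fiber $X_z$ without lying in it. Adding $\mathcal{G}$ to $\mathcal{F}$ would then strictly enlarge the general equivalence class, contradicting the maximality in Step 1. For each of the countably many families, the set of $z$ where this fails is a proper closed subset. A very general $z$ avoids all of them, which gives (3).

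\textbf{Step 4: $Z$ is not uniruled (4).} Suppose $Z$ were uniruled. Take a rational curve $C \subset Z$ through a very general point, and restrict $\mu$ over its normalization $\mathbb{P}^1$; after resolving, this is a family whose general fiber is rationally connected. By \cite{ghs} it has a section, whose image is a rational curve in $X$ that meets $X_z$ but is not contained in it. This contradicts (3).

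\textbf{Uniqueness.} Property (3) characterizes the general fibers intrinsically, as the maximal rationally connected subvarieties through general points. Rational connectedness is a birational invariant, so the birational class of $Z$ depends only on the birational type of $X$.

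\textbf{Main obstacle.} The key difficulty is Step 2: producing a quotient that is genuinely proper over a dense open set, rather than just a constructible relation. This rests on Koll\'ar's theorem on quotients by proper prerelations, and I would invoke that result rather than reprove it.
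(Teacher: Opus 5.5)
Your outline is correct and is essentially the standard argument behind the paper's citation; the paper gives no proof of its own and simply quotes Theorem IV.5.2 of Koll\'ar's book together with Corollary 1.4 of Graber--Harris--Starr, whose arguments your Steps 1--4 follow (the Koll\'ar quotient with stabilization over a maximal finite subfamily of rational curves, Koll\'ar--Miyaoka--Mori to upgrade chain connectedness to rational connectedness, and a Graber--Harris--Starr section over a rational curve in $Z$ for (4)). One cosmetic correction to Step 2: the fibers over $Z_0$ are already closed in $X$ and, after shrinking $Z_0$, smooth projective, so you do not need to take closures or resolve them.
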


A dominant rational map $f:X\dashrightarrow Y$ induces a dominant rational map between MRC bases. If the general fiber of $f$ is rationally connected, the induced map of MRC bases is birational.

More generally, we define the MRC fibration of an irreducible variety to be the MRC fibration of a smooth projective model. This is well-defined up to birational equivalence.

\subsection{Singularities of pairs and MMP}

In this section, we collect terminology and results from the minimal model program (MMP) that will be used in the sequel. Let us begin by reviewing singularities of pairs.
    
\begin{definition}
A \emph{pair} $(X, \Delta)$ consists of a normal variety $X$ and an effective $\mathbb{Q}$-divisor $\Delta$ such that $K_X + \Delta$ is $\mathbb{Q}$-Cartier. If $f: Y \rightarrow X$ is a birational morphism, there is a unique $\mathbb{Q}$-divisor $\Delta_Y$ on $Y$, called the \emph{log pull-back} of $\Delta$, defined by the formulas
\begin{align}\label{eqn delta_Y}
    K_Y + \Delta_Y = f^*(K_X + \Delta), \hspace{.2cm} \text{ and } \hspace{.2cm} f_* \Delta_Y = \Delta. 
\end{align}
If $E$ is a prime divisor on $Y$, the \emph{discrepancy} of $E$ with respect to $(X, \Delta)$, is $a(E, X, \Delta) := - \mathrm{coeff}_E ( \Delta_Y)$.

The pair $(X, \Delta)$ is \emph{lc} (resp. \emph{klt}) if $a(E, X, \Delta) \geq -1$ (resp. $a(E, X, \Delta) > -1)$ for every birational morphism $f: Y \rightarrow X$ and every prime divisor $E$ on $Y$. It is \emph{plt} if it is lc and $a(E, X, \Delta) > -1$ for every prime exceptional divisor $E$ over $X$. An irreducible subvariety $Z \subset X$ is a \emph{lc center} if there exists a prime divisor $E$ over $X$ such that $a(E, X, \Delta)= -1$ and $f(E) = Z$. Thus, an lc pair is klt if and only if it has no lc centers.

The pair $(X, \Delta)$ is an \emph{snc pair} if $X$ is smooth and $\Delta^{=1}$ is an snc divisor. A \emph{log resolution} of $(X, \Delta)$ is a resolution $f: Y \rightarrow X$ of $X$ such that $ \big( Y, \mathrm{Ex}(f) + f_*^{-1}\Delta \big)$ is an snc pair. 

Let $X^{\mathrm{snc}} \subset X$ denote the largest open subset on which $(X,\Delta)$ is an snc pair. An lc pair $(X, \Delta)$ is \emph{dlt} if every lc center meets $X^{\mathrm{snc}}$. In this case, the lc centers of $(X, \Delta)$ are precisely the closures of the strata of $ \Delta^{=1}|_{X^{\mathrm{snc}}}$.
\end{definition}

For the collapse arguments in \S \ref{section collapse}, we will use qdlt (quotient-dlt) pairs, a mild generalization of dlt pairs. This notion was introduced in \cite[Section $5$]{dFKX} in connection with the dual complex and is also used in \cite[Section $2.1$]{xu zhuang}.

\begin{definition}[{\cite[Definition $2.1$]{xu zhuang}}]
Let $E$ be a reduced divisor on a normal variety $Y$. The pair $(Y, E)$ is \emph{toroidal} if, for every point $y \in Y$, there exists an snc pair $y' \in (Y', E')$, a point $y' \in Y'$, and a finite abelian group $G$ acting on $(Y', E')$, such that $G$ preserves every irreducible component of $E'$, acts freely in codimension one, and Zariski locally, there is an isomorphism of pointed pairs
\[
    \big( y \in (Y, E) \big) \cong \big( y' \in (Y',E') \big)/G.
\]
\end{definition}

\begin{definition}[{\cite[Definition $2.2$]{xu zhuang}}]\label{defn qdlt}
An lc pair $(Y, \Gamma)$ is \emph{qdlt} (quotient-dlt) if there exists an open set $U \subset Y$ such that $\Gamma|_U$ is reduced, the pair $(U, \Gamma|_U)$ is toroidal, and every lc center of $(Y, \Gamma)$ meets $U$.
\end{definition}

The lc centers of a qdlt pair $(Y, \Gamma)$ are precisely the connected components of the intersections $E_1 \cap \ldots \cap E_r$, where $E_1, \ldots, E_r$ are irreducible components of $\Gamma^{=1}$ \cite[Proposition $34$]{dFKX}. If $S$ is such an lc center, then $\codim_Y S = r$ and each $E_i$ is $\mathbb{Q}$-Cartier at the generic point of $S$.

Qdlt pairs are preserved by the divisorial contractions and flips of an MMP. Their advantage is that, unlike dlt pairs, they are also preserved by Fano contractions.

\begin{lemma}[{\cite[Proposition $40$]{dFKX}}]\label{lemma qdlt fano}
Let $(Y, \Gamma)$ be a $\mathbb{Q}$-factorial qdlt pair, and let $h: Y \rightarrow W$ be a Fano contraction corresponding to a $(K_Y+\Gamma)$-negative extremal ray. Suppose that $\Gamma^{=1}$ is $h$-vertical. Then $W$ is $\mathbb{Q}$-factorial and there exists a $\mathbb{Q}$-divisor $\Gamma_W$ such that $(W, \Gamma_W)$ is qdlt. Moreover, $h$ induces a bijection between the strata of $\Gamma^{=1}$ and those of $\Gamma_W^{=1}$. 
\end{lemma}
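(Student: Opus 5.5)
The plan is to follow the strategy of \cite[Proposition $40$]{dFKX}: build a boundary on $W$ by a canonical bundle formula, check qdlt locally using the toroidal structure, and compare strata via the relative Picard number one condition. Write $\Gamma = E + \Gamma'$ with $E = \Gamma^{=1}$. Two inputs are standard. First, $W$ is $\mathbb{Q}$-factorial, since $Y$ is $\mathbb{Q}$-factorial and $h$ is an extremal contraction; the Weil divisor class of $W$ injects into that of $Y$ modulo the ray. Second, since $\rho(Y/W)=1$ and each component $E_i$ of $E$ is $h$-vertical, each $E_i$ is $h$-numerically trivial. Indeed, if $h(E_i)\ne W$, then $E_i$ is disjoint from a general fiber, so $E_i\cdot C=0$ for curves $C$ in the ray. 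Hence $E_i \sim_{\mathbb{Q}} h^*D_i$ for a $\mathbb{Q}$-Cartier divisor $D_i$ on $W$. As $h$ has connected fibers and $E_i$ is effective and irreducible, $E_i = h^{-1}(h(E_i))$ set-theoretically. So $h(E_i)=D_i$ is a prime divisor on $W$ and $E_i = h^*D_i$ up to multiplicity. Since $E_i$ is reduced and the multiplicity is constant along the irreducible $E_i$, we get $E_i = m_i h^*D_i$.

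Next, I would construct $\Gamma_W$. Choose an ample $A$ on $W$ and $0<\epsilon \ll 1$. Since $-(K_Y+\Gamma)$ is $h$-ample, the divisor $-(K_Y+\Gamma)+\epsilon h^*A$ is ample after perturbing. Pick a general effective $H \sim_{\mathbb{Q}} -(K_Y+\Gamma) + h^*(\text{ample})$ with small coefficients, so that $(Y,\Gamma+H)$ is still qdlt with the same lc centers and $K_Y+\Gamma+H\sim_{\mathbb{Q},h}0$. Then apply Ambro's canonical bundle formula, or Kawamata subadjunction in the form of \cite[Prop.~40]{dFKX}: $K_Y+\Gamma+H \sim_{\mathbb{Q}} h^*(K_W+B_W+M_W)$. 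Here $B_W$ is the discriminant and $M_W$ is the moduli part, which is b-nef and can be replaced by a general effective divisor with small coefficients. Set $\Gamma_W = B_W + M_W$. The lc threshold computation shows that $\mathrm{coeff}_{D_i}B_W = 1$, because $E_i$ appears with coefficient $1$ over $D_i$. Every lc center of $(W,\Gamma_W)$ is the image of an lc center of $(Y,\Gamma+H)$, and vice versa.

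The qdlt property of $(W,\Gamma_W)$ is the step I expect to be the main obstacle. Near the generic point $\eta$ of an lc center $S=E_1\cap\dots\cap E_r$ of $Y$, the pair is a toroidal quotient. I would argue that $h(S)=D_1\cap\cdots\cap D_r$ has codimension $r$ in $W$ and that $(W,\sum D_i)$ is toroidal near the generic point of $h(S)$. Over the generic point $\xi$ of $h(S)$, each $E_i=m_ih^*D_i$, so locally the $D_i$ are $r$ $\mathbb{Q}$-Cartier divisors whose intersection has codimension exactly $r$. Combined with lc-ness of $(W,\sum D_i)$ near $\xi$, this gives a toroidal structure. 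The mechanism is that lc with $r$ components through a codimension $r$ point, with coefficients summing to $r$, forces a quotient of snc, by the toric characterization of such maximal lc centers (as in \cite[Sect.~5]{dFKX}). This yields the open set $U$ in Definition \ref{defn qdlt}.

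Finally, for the bijection on strata, the map $S \mapsto h(S)$ sends strata to strata by the previous step. It is injective because $E_{i}=h^{-1}(D_i)$ set-theoretically, so the index set of a stratum is recovered from its image. It is surjective because a connected component of $D_{i_1}\cap\dots\cap D_{i_r}$ pulls back to a connected set $E_{i_1}\cap\dots\cap E_{i_r}$ over it. This uses connected fibers, and the preimage is a union of strata, which must then be a single connected component.
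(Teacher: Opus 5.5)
The paper gives no argument here; it only cites \cite[Proposition 40]{dFKX}. Your outline has the right architecture: vertical components are pullbacks because $\rho(Y/W)=1$, a canonical bundle formula produces $\Gamma_W$, the toric characterization is used at generic points of strata, and connected fibers give the bijection. However, the construction of $\Gamma_W$ has a genuine gap.

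\textbf{The gap.} You assert that the moduli part $M_W$, being b-nef, ``can be replaced by a general effective divisor with small coefficients.'' Nefness gives nothing of the kind. Choosing a general effective representative whose support avoids prescribed subvarieties amounts to b-semiampleness of the moduli part, and that is not known in general.

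Small coefficients do not rescue you in the lc setting. Each stratum $h(S)$ must be an lc center, with some divisor over it of discrepancy exactly $-1$. Adding any effective divisor through $h(S)$, in particular one containing some $D_i$, destroys log canonicity there, however small its coefficient. So you have established neither that $(W,\Gamma_W)$ is lc nor that its lc centers are exactly the $h(S)$. These are precisely the inputs your toroidal step needs.

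\textbf{The repair.} Use the $\mathbb{Q}$-factoriality of $W$ and simply discard the moduli part: set $\Gamma_W:=B_W$, the discriminant.
\begin{itemize}
\item Take a high model $\pi\colon W'\to W$ on which $\mathbf{M}$ descends with $M_{W'}$ nef (Ambro's b-nefness). Then $K_{W'}+B_{W'}+M_{W'}=\pi^*(K_W+B_W+M_W)$.
\item Since $W$ is $\mathbb{Q}$-factorial, $M_W$ is $\mathbb{Q}$-Cartier. The negativity lemma gives $\pi^*M_W=M_{W'}+G$ with $G\ge 0$, hence $K_{W'}+B_{W'}-G=\pi^*(K_W+B_W)$. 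The same holds on all higher models.
\item Therefore $(W,B_W)$ is lc. Any lc place of it has $\mathbf{B}$-coefficient $1$, so its center on $W$ is the image of an lc center of $(Y,\Gamma+H)$. Hence the lc centers of $(W,B_W)$ are among the $h(S)$.
\end{itemize}

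\textbf{The codimension step.} You assert but do not prove $\codim_W h(S)\ge r$. Here is the argument. A fiber of $h$ meeting $S$ is connected and lies in $E_J=h^{-1}(D_J)$, hence in the connected component $S$. So every fiber of $h|_S$ has dimension at least $\dim Y-\dim W$, which gives $\dim h(S)\le \dim W-r$. It follows that $B_W^{=1}=\sum D_i$, and your toric characterization then applies at the generic point of each $h(S)$.

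\textbf{A minor slip.} $-(K_Y+\Gamma)+\epsilon h^*A$ need not be ample for small $\epsilon$; you need a large multiple of $h^*A$.

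With these repairs, the rest of your argument, including the bijection of strata, goes through as you describe.
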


Qdlt pairs are also well-suited to inductive arguments on strata. Indeed,  if $(Y, \Gamma)$ is qdlt and $S$ is an lc center, adjunction endows $S$ with a natural $\mathbb{Q}$-divisor $\Gamma_S = \mathrm{Diff}_S^* \Gamma$ such that $(S, \Gamma_S)$ is qdlt and $K_S + \Gamma_S \sim_\mathbb{Q} \left(K_Y + \Gamma_Y)\right|_S$. This allows us to transfer information about $Y$ to its lc centers.

The following technical lemma will be useful for analyzing klt pairs that are not $\mathbb{Q}$-factorial.

\begin{lemma}\label{lem plt blow-up}
Let $(X, \Delta)$ be a klt pair and let $Z \subset X$ be an irreducible closed subvariety. Then there exists a $\mathbb{Q}$-factorial dlt pair $(Y, E)$, a projective birational morphism $f: Y \rightarrow X$, an open neighborhood $U \subset X$ of the generic point $\eta_Z$, and an irreducible component $E_0$ of $E$ such that $f^{-1}(Z \cap U) = E_0 \cap f^{-1}(U)$.
\end{lemma}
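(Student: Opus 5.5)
We will produce $Y$ by extracting a single divisor over $\eta_Z$ and then running a relative MMP to remove everything else near $\eta_Z$. First take a log resolution $g: W \rightarrow X$ of $(X, \Delta)$ such that $g^{-1}(Z)$ is a divisor. Over the generic point $\eta_Z$, the divisors $F_i$ on $W$ with $g(F_i) = Z$ each satisfy $a(F_i, X, \Delta) > -1$. Set $\Gamma_W := g_*^{-1}\Delta + \sum_j G_j$, where the sum runs over all $g$-exceptional prime divisors $G_j$, each with coefficient one. Then $(W, \Gamma_W)$ is a $\mathbb{Q}$-factorial dlt pair. We write
\[
K_W + \Gamma_W \sim_{\mathbb{Q},X} \sum_j \big(1 + a(G_j, X, \Delta)\big) G_j,
\]
which is an effective $g$-exceptional divisor whose support is all of $\mathrm{Ex}(g)$.

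Next we run a $(K_W + \Gamma_W)$-MMP over $X$ with scaling. This terminates by \cite{bchm}, since $\Delta$ is big over $X$ (we are in the birational setting, so everything is big) and the pair is klt after a small perturbation. By the negativity lemma, the MMP contracts every exceptional divisor. Doing this alone would return us to a small modification of $X$, so it must be modified. Instead, we fix one divisor $F_0$ over $\eta_Z$, for example a component of the exceptional divisor of the blow-up of $Z$ at its generic point. We then choose coefficients so that $F_0$ is the only divisor preserved.

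Concretely, take $\Gamma_W := g_*^{-1}\Delta + F_0 + \sum_{j\neq 0} (1 - a(G_j) - \epsilon_j)\, G_j$, with the coefficients chosen so that
\[
K_W + \Gamma_W \equiv_X \big(1 + a(F_0)\big) F_0 + \sum_{j \neq 0} c_j G_j,
\]
where $c_j > 0$. The MMP over $X$ then contracts all $G_j$ ($j \neq 0$) and, by the negativity lemma, ends in a model $f: Y \rightarrow X$ on which $K_Y + \Gamma_Y$ is $\mathbb{Q}$-linearly equivalent over $X$ to a positive multiple of $F_0$, restricted to a suitable neighborhood. This is the standard construction of a $\mathbb{Q}$-factorial plt (hence dlt) blow-up extracting exactly $F_0$, following \cite{bchm}.

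The key step, and the main obstacle, is to arrange that over a neighborhood $U$ of $\eta_Z$ the divisor $E_0 := (F_0)_Y$ equals $f^{-1}(Z \cap U)$ set-theoretically, rather than having $f^{-1}(Z)$ contain extra non-divisorial pieces. On $Y$, the divisor $-E_0$ is $f$-nef over $U$ in the appropriate sense, because $K_Y + \Gamma_Y \equiv_X (1 + a)E_0$ is negative there after the sign choice. More precisely, we run the MMP on $K + \Gamma - (1+\delta)F_0$ so that the output has $-E_0$ $f$-ample over $X$. Then $f^{-1}(Z \cap U)$ is the support of a divisor whose negative is $f$-ample, so it equals $\mathrm{Ex}(f) \cap f^{-1}(U)$, which is pure of codimension one and hence equal to $E_0$ over $U$, after shrinking $U$ to avoid the images of other exceptional loci.

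We take $E := E_0 + \lfloor$other lc boundary$\rfloor$, or simply $E = E_0$ if the lemma permits extra coefficients in the boundary. Finally, we check that $(Y, E)$ is dlt: it is plt with unique lc place $E_0$, and $\mathbb{Q}$-factoriality is preserved under the steps of the MMP.
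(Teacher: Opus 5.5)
There is a genuine gap. The whole content of the lemma is the existence of a model extracting a single divisor $E_0$ over $\eta_Z$, with $E_0$ $\mathbb{Q}$-Cartier (or $-E_0$ relatively ample) and $(Y,f_*^{-1}\Delta+E_0)$ plt near $E_0$. That is, one needs a plt blow-up over $\eta_Z$. The paper takes this from Moraga as a black box, compactifies, and passes to a $\mathbb{Q}$-factorial dlt modification that is small over $V$; since $F_0$ is $\mathbb{Q}$-Cartier, this gives $f^{-1}(Z\cap U)=E_0\cap f^{-1}(U)$.

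Your construction of this model does not work. With your boundary, $K_W+\Gamma_W\equiv_X(1+a(F_0))F_0+\sum_{j\neq0}c_jG_j$ is effective and $g$-exceptional, with support all of $\mathrm{Ex}(g)$; recall $1+a(F_0)>0$ because $(X,\Delta)$ is klt. So the same negativity-lemma argument from your first paragraph shows that this MMP contracts $F_0$ too. An output with $K_Y+\Gamma_Y\equiv_X$ a positive multiple of $E_0\neq0$ cannot be $f$-nef. (The coefficients $1-a(G_j)-\epsilon_j$ also need not lie in $[0,1]$.) Your proposed repair, an MMP for $K+\Gamma-(1+\delta)F_0$, is not a log MMP, because the coefficient of $F_0$ is negative, so BCHM does not apply. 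More generally, if the non-exceptional part of the boundary is $g_*^{-1}\Delta$, then $K_W+\Theta\equiv_X\sum_j(a(G_j,X,\Delta)+\theta_j)G_j$. The negativity lemma then forces $F_0$ to be contracted whenever its coefficient $a(F_0,X,\Delta)+\theta_0$ is positive, which is automatic if $a(F_0,X,\Delta)>0$.

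Moreover, $F_0$ cannot be prescribed in advance, and your suggested choice fails. Let $0\in X=\{x^2+y^3+z^5=0\}$ be the $E_8$ surface singularity, with $\Delta=0$, $Z=\{0\}$, and $F_0$ the exceptional curve of the blow-up of $0$. Since $f^{-1}(Z\cap U)$ must be the single curve $F_0$, the model $Y$ is forced to be the extraction of $F_0$ alone, namely $\mathrm{Bl}_0X$. This surface has an $E_7$ singularity on $F_0$. On its minimal resolution $\pi\colon\widetilde Y\to Y$, the divisor $\pi^*(K_Y+F_0)-K_{\widetilde Y}-\pi^{-1}_*F_0$ has coefficient $3$ on the trivalent curve. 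Hence $(Y,F_0)$ is not even lc, and no dlt boundary containing $F_0$ with coefficient one exists.

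The missing idea is that the divisor must be produced by the construction, using an auxiliary divisor. Choose $H$ through $Z$ and let $c$ be the log canonical threshold of $(X,\Delta;H)$ at $\eta_Z$. Perturb so that a single divisor $F_0$ computes it, then extract $F_0$ alone; this is possible since $a(F_0,X,\Delta+cH)=-1\le0$. Finally take the relative ample model of $-F_0$. This is the standard existence proof of plt blow-ups that the paper cites. You would still need a $\mathbb{Q}$-factorial dlt model, which the paper obtains by a dlt modification small over $V$. Once such a model is in hand, your last step is fine: relative ampleness of $-E_0$ together with $f(E_0)=Z$ does force $f^{-1}(Z\cap U)=E_0\cap f^{-1}(U)$.
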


\begin{proof}
Take a plt blow-up of $(X, \Delta)$ over the generic point $\eta_Z$; see \cite[Definition $2.5$ and Remark $2.9$]{moraga plt}. We obtain an open subvariety $U \subset X$ of $\eta_Z$ and a projective birational morphism $g: V \rightarrow U$ such that $g^{-1}(Z \cap U) = F_0$ is an irreducible $\mathbb{Q}$-Cartier divisor and $(V, g_*^{-1}\Delta+F_0)$ is plt. Choose a projective compactification $\overline{g}: \overline{V} \rightarrow X$ of $g$ and extend the boundary on $V$ to a pair $(\overline{V}, \Delta_{\overline{V}})$. Next, take a $\mathbb{Q}$-factorial dlt modification $h: (Y, \Gamma) \rightarrow (\overline{V}, \Delta_{\overline{V}})$ whose restriction over $V$ is small. Set $f = \overline{g} \circ h$ and $E = \Gamma^{=1}$, and let $E_0$ be the strict transform of $F_0$ on $Y$. The preimage $f^{-1}(Z \cap U) = \overline{g}^{-1}(F_0 \cap V)$ is equal to $E_0 \cap f^{-1}(U)$ because $F_0$ is $\mathbb{Q}$-Cartier and $h$ is small on $V$.
\end{proof}

Our arguments rely on running a relative MMP for pairs $(Y, \Gamma)$ over a base $X$. We record two termination results that will be used for MMPs ending with log terminal models and Fano contractions, respectively.

\begin{theorem}[{\cite[Theorem $1.2$]{bchm}}]\label{bchm pseudo-eff}
Let $(Y, \Gamma)$ be a $\mathbb{Q}$-factorial klt pair and let $f: Y \rightarrow X$ be a projective morphism of quasi-projective varieties. If $K_Y + \Gamma$ is $f$-big (for example, if $f$ is birational), then we may run a $(K_Y + \Gamma)$-MMP over $X$ ending with a log terminal model over $X$.
\end{theorem}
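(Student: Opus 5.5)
This result is quoted from \cite{bchm}, so the plan is to explain how it reduces to the main existence and finiteness theorems there rather than to reprove them. I would argue with a $(K_Y+\Gamma)$-MMP over $X$ \emph{with scaling} of an $f$-ample divisor, in three steps.

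\textbf{Step 1 (the birational case).} If $f$ is birational, the generic fiber is a point. Then every $\mathbb{Q}$-Cartier divisor is $f$-big, in particular $K_Y+\Gamma$, so the parenthetical case is covered by the general one.

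\textbf{Step 2 (making the boundary big).} Since $K_Y+\Gamma$ is $f$-big, write
\[
K_Y+\Gamma \sim_{\mathbb{Q},X} A + B
\]
with $A$ $f$-ample and $B \geq 0$. For small rational $\epsilon>0$, set $\Gamma' := \Gamma + \epsilon B + \epsilon A'$, where $A'$ is a general member of $|mA|/m$. Then
\[
(1+\epsilon)(K_Y+\Gamma) \sim_{\mathbb{Q},X} K_Y+\Gamma'.
\]
Because $(Y,\Gamma)$ is klt, $Y$ is $\mathbb{Q}$-factorial and $A'$ is general, the pair $(Y,\Gamma')$ is still klt for $\epsilon$ small. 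Its boundary $\Gamma'$ contains an $f$-ample summand, so it is $f$-big. Since $K_Y+\Gamma'$ is a positive multiple of $K_Y+\Gamma$ up to $\mathbb{Q}$-linear equivalence over $X$, the two divisors have the same MMPs. This puts us exactly in the setting of \cite[Theorem 1.2 and Corollary 1.4.2]{bchm}.

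\textbf{Step 3 (running the MMP with scaling and termination).} Choose an $f$-ample $\mathbb{Q}$-divisor $H$ such that $K_Y+\Gamma'+H$ is klt and $f$-nef. Run the $(K_Y+\Gamma')$-MMP over $X$ with scaling of $H$. Each divisorial contraction or flip exists by the cone and contraction theorems together with the existence of flips \cite[Corollary 1.4.1]{bchm}. At each stage the model is a log terminal model over $X$ of $K_Y+\Gamma'+t_iH$, where the scaling thresholds $t_i$ are nonincreasing. Finiteness of models \cite[Corollary 1.1.5]{bchm} applies because the boundaries $\Gamma'+tH$ lie in a compact rational polytope of big boundaries. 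It shows that only finitely many such models occur, so the MMP terminates. Since $K_Y+\Gamma'$ is $f$-big, no Mori fiber space can appear at the end. Hence the final model has $K+\Gamma'$ nef over $X$, i.e.\ it is a log terminal model of $(Y,\Gamma)$ over $X$.

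\textbf{Main obstacle.} The hard step is termination. It relies on the deep finiteness-of-models theorem of \cite{bchm}, whose proof is an induction on dimension using extension theorems, and I would not attempt to reprove it. The only verifications needed here are that Step 2 preserves the klt condition and that it does not change the MMP up to scaling; both are routine.
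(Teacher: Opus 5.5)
Your proposal is correct and matches the paper's treatment: the paper gives no argument of its own and simply cites \cite{bchm}. Your reduction is the standard way this statement is extracted from \cite[Corollary 1.4.2]{bchm}: you perturb $\Gamma$ to a klt boundary $\Gamma'$ containing a general $f$-ample summand, with $K_Y+\Gamma'\sim_{\mathbb{Q},X}(1+\epsilon)(K_Y+\Gamma)$, and then run the MMP with scaling, using existence of flips and finiteness of models. Two small precisions are worth adding. First, $Y_i$ is in general only a weak log canonical model of $K_Y+\Gamma'+t_iH$, since the steps with $t_j=t_i$ are trivial for this divisor; this is harmless, because finiteness of models counts weak log canonical models. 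Second, termination also needs that no model recurs. This follows because discrepancies never decrease along the MMP and strictly increase over the exceptional locus of each step.
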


This means the MMP outputs a factorization of $f$ as a composition 
\[
Y=Y_0 \overset{g_0}{\dashrightarrow} Y_1 \overset{g_1}{\dashrightarrow} \cdots \dashrightarrow
Y_r {\rightarrow} X .
\]
If $\Gamma_{i}$ denotes the birational transform of $\Gamma$ on $Y_i$, then each pair $(Y_i, \Gamma_i)$ is $\mathbb{Q}$-factorial and klt. Each map $g_i: Y_i \dashrightarrow Y_{i+1}$ for $0 \leq i < r$ is either a divisorial contraction or a flip. Moreover, since $Y_r$ is a log terminal model, $K_{Y_r} + \Gamma_r$ is relatively nef over $X$.

The complementary case produces a Fano contraction.

\begin{theorem}[{\cite[Theorem $1.3.2$]{bchm}}]\label{bchm not pseudo-eff}
Let $(Y, \Gamma)$ be a $\mathbb{Q}$-factorial klt pair and let $f: Y \rightarrow X$ be a projective morphism of normal quasi-projective varieties. If $K_Y + \Gamma$ is not $f$-pseudo-effective, then we may run a $(K_Y+\Gamma)$-MMP over $X$ ending with a Fano contraction over $X$.
\end{theorem}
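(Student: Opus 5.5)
The plan is to deduce the statement from the finiteness and termination results of \cite{bchm} by running a $(K_Y+\Gamma)$-MMP over $X$ \emph{with scaling} of a relatively ample divisor, and to use the failure of pseudo-effectivity to rule out the outcome in which the scaling parameter tends to zero.

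\textbf{Setting up the scaling.} Choose an $f$-ample $\mathbb{Q}$-divisor $A$ on $Y$ in general position, so that $(Y,\Gamma+A)$ is still klt and $K_Y+\Gamma+A$ is $f$-nef. We may arrange this, for instance, by taking $A$ a small multiple of a general member of a very ample linear system plus a large multiple of an ample class. Set
\[
\lambda_0=\inf\{t\ge 0 : K_Y+\Gamma+tA \text{ is } f\text{-nef}\}.
\]
Then $\lambda_0\le 1$. We have $\lambda_0>0$: otherwise $K_Y+\Gamma$ would be $f$-nef, hence $f$-pseudo-effective, which is excluded. By the cone theorem there is a $(K_Y+\Gamma)$-negative extremal ray $R$ over $X$ with $(K_Y+\Gamma+\lambda_0A)\cdot R=0$.

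\textbf{The inductive step.} Contract $R$.
\begin{enumerate}
    \item If the contraction is of fiber type, it is the desired Fano contraction, since $-(K_Y+\Gamma)$ is relatively ample over the target.
    \item If it is divisorial, or small followed by the flip (whose existence is guaranteed by \cite{bchm}), we replace $(Y,\Gamma,A)$ by their birational transforms. We then repeat, producing a non-increasing sequence of scaling parameters $\lambda_0\ge\lambda_1\ge\cdots$.
\end{enumerate}
Each new model is again $\mathbb{Q}$-factorial klt, and $K_{Y_i}+\Gamma_i+\lambda_iA_i$ is nef over $X$. Moreover, $K_{Y_i}+\Gamma_i$ stays non-pseudo-effective over $X$, because pseudo-effectivity is preserved under birational transform by these steps. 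The same argument as above therefore gives $\lambda_i>0$ at every stage.

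\textbf{Termination: the main obstacle.} The key point is that the sequence of steps cannot be infinite. We argue as follows. Since $K_Y+\Gamma$ is not $f$-pseudo-effective, there is $\epsilon>0$ such that $K_Y+\Gamma+tA$ is not $f$-pseudo-effective for every $t<\epsilon$. Indeed, the $f$-pseudo-effective cone is closed, so if such divisors were pseudo-effective for arbitrarily small $t$, then $K_Y+\Gamma$ would be pseudo-effective in the limit. Because each $K_{Y_i}+\Gamma_i+\lambda_iA_i$ is nef, hence pseudo-effective, this forces $\lambda_i\ge\epsilon$ for all $i$.

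Consequently every step of the MMP is also a step of an MMP for a pair $(Y,\Gamma+\lambda A)$ with $\lambda\in[\epsilon,1]$. Here the boundary contains an ample part, so it is big over $X$. The finiteness of models theorem of \cite{bchm} (the same input underlying Theorem \ref{bchm pseudo-eff}) then shows that only finitely many log terminal models occur for boundaries in the compact polytope $\Gamma+[\epsilon,1]A$. From this it follows that the MMP with scaling terminates after finitely many flips and divisorial contractions.

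When it stops, the only remaining possibility is the first alternative of the inductive step: a fiber type contraction of a $(K+\Gamma)$-negative extremal ray over $X$, which is the required Fano contraction. The delicate points are the two technical ingredients invoked above:
\begin{enumerate}
    \item the existence of flips for klt pairs;
    \item the finiteness of models, which is needed to rule out an infinite sequence of flips with $\lambda_i$ bounded below.
\end{enumerate}
Both are the core results of \cite{bchm}, and we would cite them rather than reprove them.
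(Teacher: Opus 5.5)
Your proposal is correct and is essentially the argument of \cite{bchm} that the paper cites without reproving: run the $(K_Y+\Gamma)$-MMP over $X$ with scaling of a general relatively ample $A$, use non-pseudo-effectivity to get $\lambda_i \geq \epsilon > 0$, and conclude termination by finiteness of weak log canonical models for the big boundaries $\Gamma+\lambda A$, $\lambda\in[\epsilon,1]$. The two details you leave implicit are standard and are how \cite{bchm} finish: first, the bound $\lambda_i\ge\epsilon$ comes from pulling pseudo-effectivity back to $Y$ by the negativity lemma, since every step is $(K_Y+\Gamma+\lambda_iA)$-non-positive; second, no model can repeat, because each step strictly increases some discrepancy.
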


In this case, the MMP outputs a factorization of $f$ as a composition
\[
Y=Y_0 \overset{g_0}{\dashrightarrow} Y_1 \overset{g_1}{\dashrightarrow} \cdots \dashrightarrow
Y_r \overset{h}{\rightarrow} W \rightarrow X .
\]
As above, the pairs $(Y_i, \Gamma_i)$ are $\mathbb{Q}$-factorial and klt, and each map $g_i: Y_i \dashrightarrow Y_{i+1}$ for $0 \leq i < r$ is either a divisorial contraction or a flip. The final contraction $h: Y_r \rightarrow W$ is a Fano contraction. In particular, $\dim W < \dim Y_r$. This outcome will allow us to induct on dimension.

\section{RC-Invariant functors}\label{section RC-functors}

\subsection{Basic properties}\label{subsection properties}

We introduce the fundamental invariants of this paper, called RC-invariant functors. Recall that $\mathrm{Sm}$ denotes the category of smooth, connected, and non-empty quasi-projective varieties.

\begin{definition}\label{defn P^1 invt}
An \emph{RC-invariant functor} or \emph{RC-functor} is a functor $F: \mathrm{Sm} \rightarrow \mathcal{D}$ to a cocomplete $\infty$-category $\mathcal{D}$ such that, for every dominant projective morphism $f: Y \rightarrow X$ between smooth varieties whose general fiber is rationally connected, the induced morphism $F(f): F(Y) \rightarrow F(X)$ is an equivalence.
\end{definition}

This is the $\infty$-categorical version of Definition \ref{intro-defn P^1 invt} in the introduction, with $\mathcal{D}$ taken to be either the category $\mathrm{Spc}$ of spaces or the category $\mathcal{D}(R)_{\geq 0}$ of non-negatively graded chain complexes of $R$-modules. 

In particular, $F$ is birationally invariant for smooth projective varieties and $F(f)$ is an isomorphism whenever $f$ is a model for an MRC fibration. Moreover, every RC-functor satisfies the hypotheses of Theorem \ref{thm extension}. Indeed, consider a smooth blow-up square of the form \eqref{eqn abus}. Applying $F$ gives a commutative diagram
\begin{equation}\label{eqn RC}
\begin{tikzcd}
F(E) \arrow[r] \arrow[d] & F(Y) \arrow[d] \\
F(Z) \arrow[r]           & F(X)
\end{tikzcd}
\end{equation}
The morphism $Y \rightarrow X$ is birational, while $E \rightarrow Z$ is a projective-space bundle. In either case, the general fiber is rationally connected. By RC-invariance, the vertical maps in \eqref{eqn RC} are equivalences, and consequently \eqref{eqn RC} is a pushout square. 

Theorem \ref{thm extension} therefore provides an extension 
\begin{align*}
    G = RF: \Sch \rightarrow \mathcal{D}
\end{align*}
which sends abstract blow-up squares to pushout squares. By abuse of notation, we will refer to the extension $G$ as an RC-functor as well. 

We require two slight variations on RC-functors to treat the Chow group of zero-cycles (see Proposition \ref{prop CH_0}) and the structure sheaf (see Proposition \ref{prop O}), respectively.

\begin{remark}[{\bf Variants of RC-functors}]\label{rmk P^1-invt variants}
Let $\mathrm{Sm}^{\mathrm{pr}} \subset \mathrm{Sm}$ denote the subcategory whose objects are the same as $\mathrm{Sm}$, and whose maps are the proper morphisms of varieties. A \emph{properly RC-invariant functor} is a functor $F: \mathrm{Sm}^{\mathrm{pr}} \rightarrow \mathcal{D}$ such that the condition of Definition \ref{defn P^1 invt} holds.

Let $S$ be a variety, and let $\mathrm{Sm}_S$ denote the category of smooth, connected, and non-empty varieties over $S$. A \emph{relative RC-functor} is a functor $F: \mathrm{Sm}_S \rightarrow \mathcal{D}$ such that the condition of Definition \ref{defn P^1 invt} holds whenever $f$ is a morphism of varieties over $S$.
\end{remark}

The extensions of the Chow group of zero-cycles and the structure sheaf will be treated separately, since Theorem \ref{thm extension} does not directly apply. Otherwise, all subsequent results concerning RC-functors, including Theorem \ref{intro-theorem main}, apply verbatim.

We have the following construction for an RC-functor $G = RF$. In the case of the dual complex, it appears in various forms in \cite{stepanov, thu, abw, payne, dFKX}.

\begin{definition}\label{def resn complex}
Let $G = RF$ be an RC-functor, and let $Z \subset X$ be a closed subvariety. If $f: Y \rightarrow X$ is a resolution of singularities, the \emph{resolution complex} of $Z \subset X$ is defined by
\[
    G\mathcal{R}(Z \subset X) := G(f^{-1}Z).
\]
\end{definition}

To see that this construction is independent of the choice of resolution, we prove the following birational invariance statement.

\begin{lemma}\label{lemma sm blow-up compatibility}
Let $f: Y \rightarrow X$ be a projective birational morphism between smooth connected varieties, and let $Z \subset X$ be a closed subvariety. Then there is an equivalence 
\begin{align*}
    G \big( f^{-1}(Z) \big) \simeq G \big( Z \big).
\end{align*}
\end{lemma}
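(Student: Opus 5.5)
The plan is to reduce the statement to a single smooth blow-up, using a two-out-of-three argument and the fact that $G$ sends abstract blow-up squares to pushout squares (Theorem \ref{thm extension}). Throughout, the equivalence will be the one induced by the morphism $f|_{f^{-1}(Z)}: f^{-1}(Z) \rightarrow Z$. Working with this induced map, rather than an abstract equivalence, is what lets us compose and cancel.

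\emph{Step 1 (projective bundles).} First I would show that for a projective-space bundle $\pi: P \rightarrow W$ over an arbitrary variety $W$, the map $G(P) \rightarrow G(W)$ is an equivalence. The argument is induction on $\dim W$.
\begin{itemize}
\item By \eqref{eqn coprod} and insensitivity to non-reduced structure, we may assume $W$ is reduced.
\item Choose a resolution $W' \rightarrow W$ that is an isomorphism outside a closed subset $D$ with $\dim D < \dim W$, and let $D' \subset W'$ be the preimage of $D$. Then $(D', W', D, W)$ is an abstract blow-up square.
\item Pulling back $P$ along this square gives another abstract blow-up square $(P_{D'}, P_{W'}, P_D, P)$.
\item Each component of $W'$ is smooth, so $P_{W'} \rightarrow W'$ is, componentwise, a projective morphism of smooth connected varieties with fibers $\mathbb{P}^n$. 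RC-invariance makes $G(P_{W'}) \rightarrow G(W')$ an equivalence.
\item The maps $G(P_{D'}) \rightarrow G(D')$ and $G(P_D) \rightarrow G(D)$ are equivalences by induction, since $\dim D' , \dim D < \dim W$.
\item A map of pushout diagrams that is an objectwise equivalence induces an equivalence on pushouts. Hence $G(P) \simeq G(W)$.
\end{itemize}

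\emph{Step 2 (one smooth blow-up).} Let $f: Y = \mathrm{Bl}_C X \rightarrow X$ with $C$ smooth and exceptional divisor $E \rightarrow C$. For a closed subvariety $Z \subset X$, the square
\[
\begin{tikzcd}
E|_{Z \cap C} \arrow[r] \arrow[d] & f^{-1}(Z) \arrow[d] \\
Z \cap C \arrow[r] & Z
\end{tikzcd}
\]
is an abstract blow-up square, because $f$ is an isomorphism over $X \setminus C$. Applying $G$ gives a pushout square. Its left vertical arrow is $G$ applied to the restriction of the projective bundle $E \rightarrow C$ to the closed subvariety $Z \cap C$, so it is an equivalence by Step 1. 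In a pushout square, the base change of an equivalence is an equivalence, so $G(f^{-1}(Z)) \rightarrow G(Z)$ is an equivalence.

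\emph{Step 3 (general $f$).} By induction on the number of blow-ups, Step 2 gives the claim for any composition of smooth blow-ups. For a general projective birational $f: Y \rightarrow X$ between smooth connected varieties, I would invoke Hironaka: write $Y$ as the blow-up of an ideal sheaf on $X$, and principalize that ideal. This produces a composition of smooth blow-ups $h: Y' \rightarrow X$ that factors as $h = f \circ g$, with $g: Y' \rightarrow Y$ projective birational between smooth connected varieties. Applying the same construction to $g$ yields $g': Y'' \rightarrow Y'$ such that $g \circ g'$ is also a composition of smooth blow-ups.

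Now consider the maps
\[
G\big((fgg')^{-1}Z\big) \rightarrow G\big((fg)^{-1}Z\big) \rightarrow G\big(f^{-1}Z\big) \rightarrow G(Z).
\]
We know:
\begin{itemize}
\item the composite of the last two maps, induced by $h = fg$, is an equivalence;
\item the composite of the first two maps, induced by $g g'$ over $f^{-1}Z$, is an equivalence.
\end{itemize}
By two-out-of-six, every map in the chain is an equivalence; in particular $G(f^{-1}Z) \rightarrow G(Z)$ is one.

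The main obstacle is Step 3. It needs the domination result to be set up so that every morphism in the chain is a composition of smooth blow-ups of smooth varieties, and it needs the identifications to be genuinely induced by the morphisms, so that the two-out-of-six cancellation is legitimate. Steps 1 and 2 are formal consequences of Theorem \ref{thm extension} and RC-invariance.
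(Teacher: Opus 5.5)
Your proposal is correct. Steps 1 and 2 are essentially the paper's argument. The paper also reduces to a single smooth blow-up and uses the same abstract blow-up square $f^{-1}(Z\cap C)\to f^{-1}(Z)$ over $Z\cap C\to Z$. It handles the projective bundle $f^{-1}(Z\cap C)\to Z\cap C$ by pulling back a cubical hyperresolution of $Z\cap C$, so that each piece becomes a $\mathbb{P}^n$-bundle over a smooth variety. Your induction on $\dim W$ unwinds exactly this, one abstract blow-up square at a time.

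The genuine difference is Step 3. The paper invokes the weak factorization theorem to reduce an arbitrary projective birational $f$ to a single smooth blow-up. You instead write $Y$ as the blow-up of an ideal on the quasi-projective $X$, dominate $f$ by compositions of smooth blow-ups via Hironaka's principalization, and cancel using two-out-of-six.

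Your route buys two things. First, it needs only resolution and principalization rather than the much deeper weak factorization theorem. Second, it proves directly that the map $G(f^{-1}Z)\to G(Z)$ \emph{induced by $f$} is an equivalence. That is the form the paper actually uses later for $G$-compatibility, for example in the claim that $G$-triviality can be tested on any resolution.

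The paper's route is a one-line reduction. However, weak factorization produces a zigzag of blow-ups and blow-downs. Getting the induced-map version from it needs the relative form of the theorem, in which every intermediate variety maps compatibly to $X$. The paper leaves this point implicit.
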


\begin{proof}
By the weak factorization theorem \cite{weak factorization}, it is enough to treat the case in which $f: Y \rightarrow X$ is the blow-up of $X$ along a smooth center $C \subset X$. Let $E = f^{-1}(C)$ be the exceptional divisor. Consider the abstract blow-up square
\[
\begin{tikzcd}
f^{-1}(Z\cap C) \arrow[r] \arrow[d]
& f^{-1}(Z) \arrow[d] \\
Z\cap C \arrow[r]
& Z.
\end{tikzcd}
\]
Applying $G$, we obtain a homotopy pushout square
\[
\begin{tikzcd}
G\bigl(f^{-1}(Z\cap C)\bigr) \arrow[r] \arrow[d]
& G\bigl(f^{-1}(Z)\bigr) \arrow[d] \\
G(Z\cap C) \arrow[r]
& G(Z).
\end{tikzcd}
\]

The morphism $f^{-1}(Z \cap C) \rightarrow Z \cap C$ is a projective bundle. Hence, any cubical hyperresolution of $Z \cap C$ pulls back to a cubical hyperresolution of $f^{-1}(Z \cap C)$. Since projective space is rationally connected, we deduce that $G(f^{-1}(Z \cap C)) \simeq G(Z \cap C)$. The homotopy pushout square above therefore implies that $G(f^{-1}(Z)) \simeq G(Z)$.
\end{proof}

For concrete computations of $G \mathcal{R}(Z)$, one may choose a resolution such that $f^{-1}(Z)_{\mathrm{red}}$ is an snc divisor, and then apply the standard hyperresolution of an snc divisor in Example \ref{exmp snc hyperres}. Nevertheless, the flexibility built into Definition \ref{def resn complex} will be useful. Our main Theorem \ref{intro-theorem main} may be regarded as a substantial generalization of Lemma \ref{lemma sm blow-up compatibility}.

Lemma \ref{lemma sm blow-up compatibility} also implies that an RC-invariant functor may be computed from an embedded resolution.

\begin{corollary}\label{cor X subset A}
Let $X$ be a variety. Suppose that $i: X \hookrightarrow V$ is a closed embedding into a smooth variety and $g: W \rightarrow V$ is a projective birational morphism, with $W$ smooth and $E = g^{-1}(X)_{\mathrm{red}}$ an snc divisor. Then there is a homotopy equivalence 
\begin{align*}
    G(X) \simeq G(E).
\end{align*}
\end{corollary}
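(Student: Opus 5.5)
The plan is to reduce Corollary \ref{cor X subset A} to Lemma \ref{lemma sm blow-up compatibility} applied with $Z = X$, together with the insensitivity of $G$ to non-reduced structure. We may assume $V$ is connected: otherwise replace $V$ by the union of the connected components meeting $X$. Then $X$ and $E$ decompose compatibly over the components of $V$, and $G$ commutes with coproducts by \eqref{eqn coprod}. Likewise $W$ is smooth and birational to $V$, so each component of $W$ maps birationally onto a component of $V$, and we may assume $W$ is connected as well.

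With $V$ and $W$ smooth and connected and $g$ projective birational, Lemma \ref{lemma sm blow-up compatibility} applied to the closed subvariety $X \subset V$ gives an equivalence
\[
    G\bigl(g^{-1}(X)\bigr) \simeq G(X).
\]
Here $g^{-1}(X)$ denotes the scheme-theoretic preimage, or its reduction; either reading works, because $G$ is insensitive to non-reduced structure. This insensitivity follows from Theorem \ref{thm extension}: the square with $Z = X_{\mathrm{red}} \hookrightarrow X$, with $Y = X_{\mathrm{red}}$ and with $E = X_{\mathrm{red}}$ is an abstract blow-up square. Hence $G\bigl(g^{-1}(X)\bigr) \simeq G\bigl(g^{-1}(X)_{\mathrm{red}}\bigr) = G(E)$, and combining the two equivalences gives $G(X) \simeq G(E)$.

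Finally, for an RC-functor with values in spaces such as $h^0$, the standard snc hyperresolution of Example \ref{exmp snc hyperres} computes $G(E)$. The hocolim over the semi-simplicial diagram of points indexed by strata, via Lemma \ref{lem colimit simplicial}, is the usual dual complex $\mathcal{D}(E)$, which recovers \eqref{eqn dual resn}.

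The only step needing care is the reduction to connected $V$ and $W$, since Lemma \ref{lemma sm blow-up compatibility} is stated for connected varieties and relies on weak factorization. This reduction is essentially formal. There is no real obstacle, because the content lies entirely in Lemma \ref{lemma sm blow-up compatibility}.
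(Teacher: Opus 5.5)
Your proposal is correct and is essentially the paper's argument. The paper gives no separate proof, only the remark that Lemma~\ref{lemma sm blow-up compatibility} implies the corollary: apply that lemma with $Z = X$ and $f = g$, then use that $G$ ignores non-reduced structure. Your reduction to connected $V$ and $W$ via \eqref{eqn coprod}, and your justification of nilpotent-insensitivity via the abstract blow-up square $X_{\mathrm{red}} \to X$, are valid details that the paper leaves implicit.
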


This recovers the description \eqref{eqn dual resn} of the generalized dual complex in the introduction.

\subsection{Examples}\label{subsection exmps}

This section collects the principal examples of RC-invariant functors and establishes their basic properties. We defer the final example, R-equivalence classes of points, to \S \ref{section R-equ}, since it requires our main Theorem \ref{intro-theorem main}. We begin with the most basic example.

\begin{definition}\label{defn dual complex}
Let $h^0: \mathrm{Sm} \rightarrow \Spc$ be the functor sending every smooth, connected, non-empty variety to a space with one point. This is an RC-functor, and hence admits an extension $\mathcal{D} := Rh^0$. For an arbitrary variety $X$, we call $\mathcal{D}(X)$ its \emph{(generalized) dual complex}.
\end{definition}

If $X$ is smooth but not necessarily connected, then $\mathcal{D}(X)$ is the discrete space whose points correspond to the connected components of $X$ by \eqref{eqn coprod}. For singular $X$, the space $\mathcal{D}(X)$ is built from a homotopy colimit over the pieces of a hyperresolution. We compute a few examples and compare this construction with the usual dual complex.

\begin{example}[{\bf Snc curve}]
Let $E = C_1 \cup C_2$ be an snc divisor, where $C_1$ and $C_2$ are curves meeting transversely at two points $p_1$ and $p_2$. The abstract blow-up square
\[
\begin{tikzcd}
\{ p_1, p_2 \} \arrow[r] \arrow[d]
  & C_1 \arrow[d] \\
C_2 \arrow[r]
  & E
\end{tikzcd},
\]
induces a homotopy pushout square
\[
\begin{tikzcd}
S^0 \arrow[r] \arrow[d]
  & \pt \arrow[d] \\
\pt \arrow[r]
  & \mathcal{D}(E)
\end{tikzcd}.
\]
The homotopy pushout of this diagram is $\mathcal{D}(E) \simeq S^1$.
\end{example}

More generally, the generalized dual complex of an snc divisor recovers its usual dual complex.

\begin{example}[{\bf Snc divisor}]\label{exmp snc}
Let $E = \bigcup_{i =0}^n E_i$ be an snc divisor. Write $E_J = \cap_{j \in J} E_j$ for a non-empty subset $J \subset \{0, \ldots n\}$ and $E^{[k]} = \coprod_{|J| = k+1} E_J$ for $0 \leq k \leq n$. As in Example \ref{exmp snc hyperres}, the standard semi-simplicial resolution of $E$ has the form
\begin{align*}
     \bigg( \ldots E^{[2]}
    \mathrel{\substack{\rightarrow\\[-0.6ex]\rightarrow\\[-0.6ex]\rightarrow}}
    E^{[1]} \rightrightarrows E^{[0]} \bigg) \rightarrow E.
\end{align*}
By Lemma \ref{lem colimit simplicial}, $\mathcal{D}(E) \simeq \operatorname*{hocolim}_k \big( h^0 E^{[k]} \big)$. This homotopy colimit is a cell complex whose vertices correspond to the irreducible components $E_i$, whose edges correspond to connected components of pairwise intersections $E_i \cap E_j$, and more generally whose $p$-cells correspond to the connected components of the $(p+1)$-fold intersections $E_{i_0} \cap \ldots \cap E_{i_p}$. Thus $\mathcal{D}(E)$ agrees with the usual dual complex of the snc divisor $E$.
\end{example}

\begin{example}[{\bf Isolated singularities}]\label{exmp suspension}
Let $x \in X$ be an isolated singularity, and let $f: Y \rightarrow X$ be a log resolution with exceptional divisor $E = f^{-1}(x)_{\mathrm{red}}$. The associated abstract blow-up square induces a homotopy pushout
\[
\begin{tikzcd}
\mathcal{D}(E) \arrow[r] \arrow[d]
  & \mathcal{D}(Y) \simeq \pt \arrow[d] \\
\mathcal{D}(x) \simeq \pt \arrow[r]
  & \mathcal{D}(X).
\end{tikzcd},
\]
Therefore, $\mathcal{D}(X) \simeq \sum \mathcal{D}(E)$, the suspension of the usual dual complex of $E$.
\end{example}

We next introduce RC-functors measuring the low-degree cohomology groups $H^0(-, \mathbb{Z})$ and $H^1(-,\mathbb{Z})$. 

Recall that $\mathcal D(\mathbb Z)_{\geq 0}^{\mathrm{op}}$ denotes the $\infty$-category of non-negatively graded cochain complexes of abelian groups. Recall also that the integral weight piece $\gr_q^W H^{p+q}(X,\mathbb Z)$ is naturally a subquotient of $H_q^{p+q}(X,\mathbb Z)$, and that the two become isomorphic after tensoring with $\mathbb Q$.

\begin{proposition}\label{prop H^i}
The functors
\begin{align*}
H^0(-,\mathbb Z),\, H^1(-,\mathbb Z)
\colon
\mathrm{Sm}\longrightarrow \mathcal D(\mathbb Z)_{\geq 0}^{\mathrm{op}},
\end{align*}
where each cohomology group is regarded as a cochain complex concentrated in degree zero, are RC-functors. Let $RH^0,\ RH^1: \mathrm{Sch}\rightarrow \mathcal D(\mathbb Z)_{\geq 0}^{\mathrm{op}}$ denote their extensions. Then, for every projective variety $X$ and every $i\geq 0$, there are isomorphisms
\begin{align*}
    H^i \big( \mathcal{D}(X), \mathbb{Z} \big) \simeq H^i \big( RH^0(X) \big) \simeq H^i_0 \big( X , \mathbb{Z} \big)
\end{align*}
and 
\begin{align*}
    H^i \big( RH^1(X) \big) \simeq H^{i+1}_1 \big( X , \mathbb{Z} \big).
\end{align*}
After tensoring with $\mathbb Q$, these agree with the weight pieces $W_0 H^i(X, \mathbb{Q})$ and $\gr_1^W H^{i+1}(X, \mathbb{Q})$.
\end{proposition}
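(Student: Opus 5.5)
We need to prove two things: that $H^0$ and $H^1$ are RC-functors, and that their extensions compute the $E_2$ rows of the weight spectral sequence. The plan has three steps.

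\textbf{Step 1 (RC-invariance).} Let $f\colon Y\to X$ be a dominant projective morphism between smooth connected varieties whose general fiber is rationally connected. Then $f$ has connected general fiber, and both $H^0(Y,\mathbb Z)$ and $H^0(X,\mathbb Z)$ equal $\mathbb Z$. For $H^1$, we show that $f^*\colon H^1(X,\mathbb Z)\to H^1(Y,\mathbb Z)$ is an isomorphism.
\begin{itemize}
\item[(a)] The fiber $F$ is smooth, projective and rationally connected, so $\pi_1(F)=1$ (Campana, Koll\'ar--Miyaoka--Mori).
\item[(b)] Over a dense open $U\subset X$, $f$ is a smooth fibration. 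Hence $\pi_1(f^{-1}U)\to\pi_1(U)$ is an isomorphism: the kernel is the image of $\pi_1(F)=1$, and surjectivity holds because $F$ is connected.
\item[(c)] Since $X$ is smooth, $\pi_1(U)\to\pi_1(X)$ is surjective. Since $f$ has connected fibers over a normal base, $\pi_1(Y)\to\pi_1(X)$ is surjective.
\item[(d)] Via $H^1(-,\mathbb Z)=\mathrm{Hom}(\pi_1,\mathbb Z)$, surjectivity gives injectivity of $f^*$ on $H^1$.
\item[(e)] For surjectivity of $f^*$: a class on $Y$ restricts to a class on $f^{-1}U$, which descends to $U$ by (b). It extends across the complement $X\setminus U$ because it is trivial on loops around the divisorial part of $X\setminus U$. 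Such loops lift to $Y$, since some component of $f^{-1}$ of each divisor dominates it, possibly with multiplicity.
\end{itemize}
The multiplicity in (e) is the subtle point: a multiple fiber of multiplicity $m$ only shows that $m$ times the loop class dies. To avoid this, use instead that rationally connected general fibers force a component of multiplicity one over each codimension-one point (Graber--Harris--Starr give a section over curves), which kills the loop itself. Alternatively, reduce by RC-invariance to the birational case after resolving, and use that $H^1$ is a birational invariant of smooth varieties only up to removing codimension $\geq 2$. I expect this part (e) to be the main obstacle in Step 1.

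\textbf{Step 2 (computing the extensions).} By Theorem~\ref{thm extension} and Lemma~\ref{lem colimit simplicial}, for a semi-simplicial hyperresolution $X_\bullet\to X$, $RH^q(X)$ is the colimit in $\mathcal D(\mathbb Z)^{\mathrm{op}}_{\ge 0}$ of $H^q(X_\bullet)$, that is, the homotopy limit of the cosemisimplicial abelian group $H^q(X_\bullet,\mathbb Z)$. This limit is the total cochain complex
\[
H^q(X_0)\to H^q(X_1)\to\cdots,
\]
with alternating sums of face maps as differentials. This is exactly the $q$-th row $(E_1^{\bullet,q},d_1)$ of \eqref{eqn weight ss}. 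Therefore
\[
H^i(RH^q(X))=E_2^{i,q}=H^{i+q}_q(X,\mathbb Z).
\]
Taking $q=0$ and $q=1$ gives the claimed isomorphisms $H^i(RH^0(X))\simeq H^i_0(X,\mathbb Z)$ and $H^i(RH^1(X))\simeq H^{i+1}_1(X,\mathbb Z)$. Independence of the choice of hyperresolution comes from the uniqueness in Theorem~\ref{thm extension}.

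\textbf{Step 3 (the dual complex and the rational statement).} For the dual complex, $\mathcal D(X)$ is the hocolim of the discrete sets $\pi_0(X_\bullet)$. Its cellular cochains are the complex of locally constant $\mathbb Z$-valued functions on $X_\bullet$, which is $H^0(X_\bullet,\mathbb Z)$. Equivalently, the functor $C^*(-,\mathbb Z)\colon\mathrm{Spc}\to\mathcal D(\mathbb Z)^{\mathrm{op}}$ sends colimits to limits and sends $h^0$ to $H^0$, so
\[
H^i(\mathcal D(X),\mathbb Z)\simeq H^i(RH^0(X)).
\]
Finally, after tensoring with $\mathbb Q$, the degeneration at $E_2$ recalled before Proposition~\ref{prop integral weight} identifies $E_2^{i,0}\otimes\mathbb Q$ with $W_0H^i(X,\mathbb Q)$ and $E_2^{i,1}\otimes\mathbb Q$ with $\gr^W_1H^{i+1}(X,\mathbb Q)$.
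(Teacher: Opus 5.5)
Your proposal is correct. Steps 2 and 3 are essentially the paper's argument. In Step 2, $RH^q(X)$ is the totalization of $H^q(X_\bullet,\mathbb{Z})$, i.e.\ the $q$th row $(E_1^{\bullet,q},d_1)$ of the weight spectral sequence. In Step 3, $H^i(\mathcal{D}(X),\mathbb{Z})\simeq H^i(RH^0(X))$ because $C^\bullet(-,\mathbb{Z})$ turns colimits into limits and $C^\bullet\circ h^0=H^0$ on smooth varieties. The paper phrases this through the uniqueness in Theorem \ref{thm extension}, while you use the colimit formula directly; the two amount to the same thing.

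The real difference is in Step 1. The paper does no topology there. It quotes the RC-invariance of $\pi_1$ (Proposition \ref{prop pi1}) and writes $H^1(-,\mathbb{Z})=\mathrm{Hom}(\pi_1(-),\mathbb{Z})$. So $H^1$-invariance becomes a one-line corollary of a stronger statement that the paper needs anyway for $R\tau_{\leq 1}$.

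Your argument is self-contained, but you have misidentified where its difficulty lies. In (e) you are testing a homomorphism $\beta\colon \pi_1(U)\to\mathbb{Z}$. A small loop in $Y$ around a component of multiplicity $m$ gives $m\,\beta(\gamma)=0$, and hence $\beta(\gamma)=0$ because $\mathbb{Z}$ is torsion-free. So the Graber--Harris--Starr multiplicity-one detour is unnecessary, though it is valid. The other alternative you mention, ``reduce by RC-invariance to the birational case,'' is circular and should be dropped.

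With that observation, your proof uses rational connectedness only through $\pi_1(F)=1$. It therefore shows that $H^1(-,\mathbb{Z})$ is invariant under any such fibration with connected, simply connected general fiber. This includes the family in Example \ref{exmp pi_1}, where $\pi_1$ itself is not invariant. The $\pi_1$-isomorphism the paper relies on is strictly stronger, and it is exactly there that multiple fibers must genuinely be ruled out.
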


The restriction to degrees $0$ and $1$ is essential. For $j>1$, the groups $H^j(X,\mathbb Z)$ are not birational invariants of smooth projective varieties, and therefore cannot define RC-functors.

\begin{proof}
Let $f: Y \rightarrow X$ be a dominant projective morphism of smooth connected varieties whose general fiber is rationally connected. Clearly $H^0(X, \mathbb{Z}) \simeq H^0(Y, \mathbb{Z})$, so $H^0(-, \mathbb{Z})$ is an RC-functor. 

By Proposition \ref{prop pi1}, proved independently below, the fundamental group is RC-invariant, so $\pi_1(Y) \simeq \pi_1(X)$. The universal coefficient theorem and the Hurewicz theorem give natural identifications
\begin{align*}
    H^1(X, \mathbb{Z}) = \mathrm{Hom} \big( H_1(X, \mathbb{Z}), \mathbb{Z}\big) = \mathrm{Hom} \big( \pi_1(X), \mathbb{Z} \big)
\end{align*}
and 
\begin{align*}
    H^1(Y, \mathbb{Z}) = \mathrm{Hom} \big(H_1(Y, \mathbb{Z} \big), \mathbb{Z}) = \mathrm{Hom} \big( \pi_1(Y), \mathbb{Z} \big).
\end{align*}
Thus, $H^1(X, \mathbb{Z}) \simeq H^1(Y, \mathbb{Z})$, so $H^1(-,\mathbb{Z})$ is an RC-functor as well.

We now prove the identifications of cohomology groups. First, the functor $H^0(-,\mathbb{Z})$ factors as
\begin{align*}
    \mathrm{Sm} \xrightarrow[]{h^0} \mathrm{Spc}  \xrightarrow{C^\bullet(-,\mathbb{Z})} \mathcal{D}(\mathbb{Z})^{\mathrm{op}}_{\geq 0},
\end{align*}
where $C^\bullet(-,\mathbb{Z})$ sends a space to its singular cochain complex. Since $C^\bullet(-,\mathbb{Z})$ preserves pushout squares, the composite $C^\bullet \circ Rh^0$ sends abstract blow-up squares to pushouts. It agrees with $RH^0$ on smooth varieties, so the uniqueness part of Theorem \ref{thm extension} implies $RH^0 = C^\bullet \circ Rh^0$. Taking cohomology gives
\[
H^i\bigl(RH^0(X)\bigr)
\simeq
H^i\bigl(\mathcal D(X),\mathbb Z\bigr).
\]

On the other hand, if $X$ is projective and $\epsilon_\bullet: X_\bullet \rightarrow X$ is a cubical hyperresolution, then by definition
\begin{align*}
    RH^0(X, \mathbb{Z}) = \bigg( H^0(X_0, \mathbb{Z}) \rightarrow H^0(X_1, \mathbb{Z}) \rightarrow \ldots  \bigg).
\end{align*}
The $i$th cohomology group of this complex is $H_0^i(X, \mathbb{Z})$.

Similarly, we have an expression for the extension $RH^1(X, \mathbb{Z})$ as the total complex
\begin{align*}
    RH^1(X, \mathbb{Z}) = \bigg( H^1(X_0, \mathbb{Z}) \rightarrow H^1(X_1, \mathbb{Z}) \rightarrow \ldots  \bigg).
\end{align*}
The $i$th cohomology group of this complex is $H^{i+1}_1(X, \mathbb{Z})$. We obtain the final statement regarding the tensor product with $\mathbb{Q}$ from Proposition \ref{prop integral weight}.
\end{proof}

We next consider the fundamental group. Although $\pi_1(-)$ is itself RC-invariant, it is useful to retain slightly more homotopical information for a well-behaved extension to singular varieties. See Example \ref{exmp pi_1} for details. We therefore work with the $1$-truncation $\tau_{\leq 1} X$ of the analytic homotopy type of $X$. It is characterized up to homotopy equivalence by
\[
\pi_0(\tau_{\leq 1}X)\simeq \pi_0(X), \qquad
\pi_1(\tau_{\leq 1}X)\simeq \pi_1(X),
\]
and
\[
\pi_i(\tau_{\leq 1}X)=0
\qquad\text{for all }i>1.
\]
Equivalently, $\tau_{\leq 1} X$ is the classifying space of the fundamental groupoid $\Pi_1(X)$.

\begin{proposition}\label{prop pi1}
The functor
\begin{align*}
    \tau_{\leq 1}: \mathrm{Sm} \rightarrow \mathrm{Spc}
\end{align*}
is RC-invariant. Let $R \tau_{\leq 1}: \mathrm{Sch} \rightarrow \mathrm{Spc}$ denote its extension. Then, for every variety $X$, there is an isomorphism 
\begin{align*}
    \pi_1 \big( R \tau_{\leq 1} X \big) \simeq \pi_1(X).
\end{align*}
\end{proposition}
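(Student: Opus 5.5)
The proof has two parts: RC-invariance of $\tau_{\leq 1}$, and the identification $\pi_1(R\tau_{\leq 1}X)\simeq \pi_1(X)$ for singular $X$.

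\emph{RC-invariance.} Let $f: Y \to X$ be a dominant projective morphism of smooth connected varieties whose general fiber is rationally connected. Both source and target are connected $1$-types, so it suffices to show that $f_*: \pi_1(Y) \to \pi_1(X)$ is an isomorphism.

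For surjectivity, first observe that the general fiber is connected, being rationally connected. Choose a dense open $U \subset X$ over which $f$ is smooth with rationally connected fibers. Then $\pi_1(f^{-1}U) \to \pi_1(U)$ is surjective by the homotopy exact sequence of the fibration. Since $X$ is smooth, $\pi_1(U) \to \pi_1(X)$ is surjective, and surjectivity of $f_*$ follows.

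For injectivity, the smooth fibers $F$ over $U$ are smooth projective rationally connected, hence simply connected (Campana, Koll\'ar--Miyaoka--Mori). So $\pi_1(f^{-1}U) \simeq \pi_1(U)$. The kernel of $\pi_1(Y) \to \pi_1(X)$ is then generated by the images of loops around the divisors of $Y$ lying over $X \setminus U$. Here I would use the standard argument (Koll\'ar, \emph{Shafarevich maps}, or Campana): it suffices to show that over a general point of each codimension-one component $D \subset X \setminus U$, some component of $f^{-1}(D)$ appears with multiplicity one. Equivalently, the fibration has no multiple fibers in codimension one. This follows from Graber--Harris--Starr: after restricting to a general curve $C \subset X$ transverse to $D$, the family over $C$ admits a section. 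The section meets a reduced component of the fiber, and the meridian loop in $Y$ maps isomorphically to the meridian in $X$. Loops over components of $f^{-1}(X\setminus U)$ not dominating a divisor are trivial, since $Y$ is smooth and those components have codimension $\geq 2$ in $Y$, or map to codimension $\geq 2$, where $\pi_1(X\setminus\text{codim}\,2)=\pi_1(X)$. I expect this step to be the main obstacle.

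\emph{Comparison.} Let $\mathcal{T}(X)$ denote the analytic homotopy type of $X$. The functor $X \mapsto \tau_{\leq 1}\mathcal{T}(X)$ on $\Sch$ is not known a priori to send abstract blow-up squares to pushouts, so instead I compare via van Kampen. For an abstract blow-up square $E \to Y$, $Z \to X$, the analytic topology gives a homotopy pushout $\mathcal{T}(X) \simeq \mathcal{T}(Y)\cup_{\mathcal{T}(E)}\mathcal{T}(Z)$. This holds because $(Y,E)$ and $(X,Z)$ are CW pairs by triangulation, and $Y/E \simeq X/Z$ via the proper map that is an isomorphism off $E$.

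Applying $\tau_{\leq 1}$ and taking the pushout in $1$-types, $\tau_{\leq1}$ is a left adjoint and so preserves colimits. Hence $X \mapsto \tau_{\leq 1}\mathcal{T}(X)$, with the pushout computed in $1$-types, sends abstract blow-up squares to pushouts in $\tau_{\leq 1}\Spc$. Now run the uniqueness argument of Theorem \ref{thm extension} in the cocomplete category of $1$-types. The functor $\tau_{\leq 1}\circ R\tau_{\leq 1}$ also sends abstract blow-up squares to pushouts, since $\tau_{\leq 1}$ preserves colimits, and it agrees with $\tau_{\leq1}\mathcal{T}$ on $\Sm$. Hence $\tau_{\leq 1}R\tau_{\leq 1}X \simeq \tau_{\leq 1}\mathcal{T}(X)$. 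Taking $\pi_1$, which is unchanged by $1$-truncation, gives $\pi_1(R\tau_{\leq 1}X)\simeq \pi_1(X)$.
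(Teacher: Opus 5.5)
Your argument is correct. The comparison step has the same skeleton as the paper's: show that the analytic invariant carries abstract blow-up squares to pushouts of $1$-types, note that $\tau_{\leq 1}$ is a left adjoint so $\tau_{\leq 1}\circ R\tau_{\leq 1}$ does too, and conclude by the uniqueness clause of Theorem~\ref{thm extension}. The paper phrases this in $\mathrm{Gpd}$ via $B^{-1}$, which is equivalent.

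The differences are in the two inputs.

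For RC-invariance, the paper simply cites an external theorem. You instead reprove it by the standard route. Simple connectivity of smooth rationally connected fibers gives $\pi_1(f^{-1}U)\simeq\pi_1(U)$. Graber--Harris--Starr sections over general curves produce a multiplicity-one component over each divisor $D\subset X\setminus U$, so the meridian of $D$ dies in $\pi_1(Y)$. Only the containment $\ker(\pi_1(U)\to\pi_1(X))\subseteq\ker(\pi_1(U)\to\pi_1(Y))$ needs proof; the reverse is automatic. Your remarks about components not dominating a divisor are therefore unnecessary, and your phrase about the kernel being generated by meridians of $Y$ is slightly garbled. The criterion you isolate is nonetheless the right one.

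For descent, the paper applies Seifert--van Kampen for groupoids to open-star neighborhoods of $Z$ and $E$. You prove the stronger fact that the whole analytic homotopy type sends abstract blow-up squares to homotopy pushouts, and then truncate. Your justification of that fact should be sharpened. An identification $Y/E\simeq X/Z$ on its own only controls the homology of the comparison map $Y\cup_E Z\to X$. What actually does the work is that this map is a continuous proper bijection; equivalently, $Y\sqcup Z\to X$ is a proper surjection and hence a quotient map. So $X$ is homeomorphic to the topological pushout $Y\cup_E Z$, and that pushout is a homotopy pushout because $E\hookrightarrow Y$ is a closed cofibration by triangulation.

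With that repair, your route gives descent for the untruncated homotopy type and avoids the open-star bookkeeping. The paper's route stays at the groupoid level, which is all the statement needs, and it outsources the RC-invariance of $\pi_1$ to the literature.
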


For a point of comparison, note that $0$-truncation defines the RC-functor  $\tau_{\leq 0} = h^0$ of Proposition \ref{prop H^i}.

\begin{proof}
Let $f: Y \rightarrow X$ be a dominant projective morphism of smooth connected varieties whose general fiber is rationally connected. By \cite[Theorem $4.1$]{bc}, there is an isomorphism $\pi_1(Y) \simeq \pi_1(X)$. Since both $X$ and $Y$ are connected, this means $\tau_{\leq 1}(Y) \simeq \tau_{\leq 1}(X)$. Hence $\tau_{\leq 1}$ is RC-invariant.

We will prove that the functor
\begin{align*}
\Pi_1\colon \mathrm{Sch}\longrightarrow \mathrm{Gpd},
\qquad
X\longmapsto \Pi_1(X),
\end{align*}
sending a variety to its fundamental groupoid carries abstract blow-up squares to pushout squares of groupoids.

Assuming this, the proposition follows. Indeed, recall that the classifying space functor induces an equivalence $B: \mathrm{Gpd} \rightarrow \mathrm{Spc}_{\leq 1}$ under which $B \Pi_1(X) = \tau_{\leq 1}(X)$. Consider the composite
\begin{align*}
    \mathrm{Sch} \xrightarrow[]{R \tau_{\leq 1}} \mathrm{Spc} \xrightarrow[]{\tau_{\leq 1}} \mathrm{Spc}_{\leq 1} \xrightarrow{B^{-1}} \mathrm{Gpd}.
\end{align*}
Since $\tau_{\leq 1}$ is left adjoint to the inclusion $\mathrm{Spc}_{\leq 1} \rightarrow \mathrm{Spc}$, it preserves pushouts. Hence, the above composition sends abstract blow-up squares to pushout squares. Moreover, on smooth varieties, it agrees with the fundamental groupoid $\Pi_1$. By the uniqueness part of Theorem \ref{thm extension}, the two functors are equivalent. In particular, taking fundamental groups, we obtain $\pi_1( R \tau_{\leq 1} X ) \simeq \pi_1(X)$.

Consider now an abstract blow-up square of the form \eqref{eqn abus}. Choose compatible triangulations of the analytic spaces underlying $X$ and $Y$ such that $Z$ and $E$ are subcomplexes, respectively. Let $U \subset X$ be the open star of $Z$, and set $V = X \setminus Z$. Then $U$ is a small open neighborhood of $Z$ which deformation retracts onto $Z$. By the Seifert-Van Kampen theorem for fundamental groupoids, 
\begin{align}\label{eqn pi X}
    \Pi_1 \big(X \big) \simeq \Pi_1 \big(V \big) *_{\Pi_1 ( U \cap V ) } \Pi_1 \big( U \big).
\end{align}
The pre-image $f^{-1}(U)$ is the open star of $E$, so it deformation retracts onto $E$. By Seifert-Van Kampen again,
\begin{align}\label{eqn pi Y}
    \Pi_1 \big( Y \big) \simeq \Pi_1 \big( f^{-1}(V) \big) *_{\Pi_1 ( f^{-1}(V \cap U) )} \Pi_1 \big(f^{-1}(U) \big).
\end{align}
Using \eqref{eqn pi X} and \eqref{eqn pi Y}, together with the identifications $f^{-1}(V) \simeq V$ and $f^{-1}(V \cap U) \simeq V \cap U$, we compute
\begin{align*}
    \Pi_1 \big(Y \big) *_{\Pi_1(E)} \Pi_1 \big( Z \big) \simeq& \bigg( \Pi_1 \big(V \big) *_{\Pi_1(V \cap U)} \Pi_1 \big(f^{-1}(U) \big) \bigg) *_{\Pi_1(f^{-1}(U))} \Pi_1 \big(U \big) \\
    \simeq& \Pi_1 \big(V \big) *_{\Pi_1(V \cap U)} \Pi_1 \big(U \big) \\ \simeq& \Pi_1 \big(X \big).
\end{align*}
Thus \(\Pi_1\) sends abstract blow-up squares to pushout squares of groupoids.
\end{proof}

\begin{remark}[{\bf Simpler variants}]
Alternatively, one may consider the groupoid-valued functor
\begin{align*}
    \Pi_1: \mathrm{Sm} \rightarrow \mathrm{Gpd}, \qquad X \mapsto \Pi_1(X).
\end{align*}
The argument of Proposition~\ref{prop pi1} shows it is RC-invariant and that its extension is given by $R\Pi_1(X) = \Pi_1(X) \in \mathrm{Gpd}$. This is conceptually simpler than the functor in Proposition \ref{prop pi1}, whose geometric meaning in higher degrees is unclear. Nevertheless, the latter functor retains strictly more information, so we shall continue to use it. 

A similar comment will apply to the Chow group of zero-cycles: there is an RC-functor $CH_0: \mathrm{Sm} \rightarrow \mathrm{AbGp}$ whose extension satisfies $RCH_0(X) \simeq CH_0(X) \in \mathrm{AbGp}$ for all projective varieties. Instead, we will work with an analogue that retains more information.
\end{remark}

It is crucial in Proposition \ref{prop pi1} to record the full $1$-truncation $\tau_{\leq 1} X$ (or the fundamental groupoid), rather than only the fundamental group $\pi_1(X)$. The following example illustrates why.

\begin{remark}[{\bf Fundamental Group}]\label{rmk groupoid}
Let $\mathbb{P}^1 \rightarrow C$ be the normalization of a nodal rational curve. Applying $R \tau_{\leq 1}$ to the associated abstract blow-up square gives a pushout square
\[
\begin{tikzcd}
S^0 \arrow[r] \arrow[d]
  & \pt \arrow[d] \\
\pt \arrow[r]
  & R \tau_{\leq 1} C.
\end{tikzcd}.
\]
Its homotopy pushout is $R \tau_{\leq 1}(C) \simeq S^1$, and therefore $\pi_1(C) \simeq \pi_1(S^1) \simeq \mathbb{Z}$. The generator arises from identifying the two points of the fiber over the node. If one only records fundamental groups, this information is lost, and the resulting pushout fails to recover $\pi_1(C)$.
\end{remark}

We next consider the structure sheaf and its cohomology as examples of RC-functors. We use the notion of a relative RC-functor introduced in Remark \ref{rmk P^1-invt variants}. For a variety $S$, let $\mathcal{D}_{\mathrm{qc}}(\mathscr{O}_S)_{\geq 0}$ denote the $\infty$-category of non-negatively graded complexes of quasi-coherent $\mathscr{O}_S$-modules.

\begin{proposition}\label{prop O}
Let $S$ be a variety. The assignment
\begin{align*}
    \mathscr{O}_{/S}: \mathrm{Sm}_S \rightarrow \mathcal{D}_{\mathrm{qc}}(\mathscr{O}_S)_{\geq 0}, \hspace{.4cm} \big( \phi:X \rightarrow S \big) \mapsto R \phi_* \mathscr{O}_X
\end{align*}
is a relative RC-invariant functor. It admits an extension $R \mathscr{O}_{/S}: \mathrm{Sch}_S \rightarrow \mathcal{D}_{\mathrm{qc}}(\mathscr{O}_S)_{\geq 0}$ given by $R\mathscr{O}_{/S} (X) \simeq R \phi_* \underline{\Omega}_X^0$, which sends abstract blow-up squares to pushout squares.
\end{proposition}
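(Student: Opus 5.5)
The plan has two parts. First I verify relative RC-invariance on smooth varieties. Then I identify the extension with $R\phi_*\underline{\Omega}_X^0$ by checking that the latter satisfies the characterizing properties in Theorem \ref{thm extension}.

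\emph{RC-invariance.} Let $f: Y \rightarrow X$ be a dominant projective morphism over $S$ between smooth connected varieties whose general fiber is rationally connected. I claim $Rf_*\mathscr{O}_Y \simeq \mathscr{O}_X$. Since $f$ is dominant and projective, it is surjective. Consider its Stein factorization; the general fiber is connected, so $f_*\mathscr{O}_Y = \mathscr{O}_X$ because $X$ is normal. For the higher direct images, use Koll\'ar's torsion-freeness theorem: $R^if_*\omega_Y$ is torsion-free. Its generic rank equals $h^i(F,\omega_F)=h^{d-i}(F,\mathscr{O}_F)$, and this vanishes for $i<d$, the relative dimension, since the general fiber $F$ is rationally connected. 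Hence $R^if_*\omega_Y = 0$ for $0\le i<d$, and $R^df_*\omega_Y\simeq\omega_X$. Grothendieck duality then gives $Rf_*\mathscr{O}_Y\simeq \mathscr{O}_X$. Alternatively, one can compare both sides with a resolution: smooth varieties have rational singularities, so the identity is a birational statement reducible to the fibration case. Applying $R\phi_*$, with $\phi\circ f=\phi_Y$, gives $R\phi_{Y*}\mathscr{O}_Y \simeq R\phi_*\mathscr{O}_X$. This functor is covariant along pushforward, so its morphisms are induced by $\mathscr{O}_X\to Rf_*\mathscr{O}_Y$, read in the appropriate (op) direction. I will be careful about variance but otherwise treat it formally.

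\emph{Extension.} Theorem \ref{thm extension} applies to the relative category, since the argument given after Definition \ref{defn P^1 invt} shows that smooth blow-up squares go to pushouts. The paper says Theorem \ref{thm extension} does not directly apply here, presumably because of variance or the quasi-coherent target. So I will instead verify the universal property by hand. Define $G(X) = R\phi_*\underline{\Omega}_X^0$ for $X\in \mathrm{Sch}_S$. For smooth $X$ we have $\underline{\Omega}_X^0 = \mathscr{O}_X$, so $G$ agrees with $\mathscr{O}_{/S}$. Also $G$ is insensitive to nilpotents, and $G(\emptyset)=0$. The key input is the standard exact triangle of Du Bois complexes for an abstract blow-up square (Du Bois; see \cite[Chapter 7]{peters steenbrink}):
\[
\underline{\Omega}_X^0 \to R\pi_*\underline{\Omega}_Y^0 \oplus \underline{\Omega}_Z^0 \to R\pi_*\underline{\Omega}_E^0 \xrightarrow{+1}.
\]
Pushing forward to $S$ gives a bicartesian square, which is a pushout in the stable setting, read in the opposite category as required. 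Uniqueness in the extension formalism, or equivalently computation via a cubical hyperresolution together with cohomological descent $\underline{\Omega}_X^0 \simeq R\epsilon_{\bullet*}\mathscr{O}_{X_\bullet}$, then identifies $R\mathscr{O}_{/S}$ with $G$.

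The main obstacle is this identification, rather than the RC-invariance. Two points need care. The first is that the limit/colimit in $\mathcal{D}_{\mathrm{qc}}(\mathscr{O}_S)_{\geq 0}$ must be taken with the correct variance; in particular, the truncation $\geq 0$ must be compatible with the fiber/cofiber in the triangle. The second is that the descent description $\underline{\Omega}^0_X\simeq R\epsilon_{\bullet *}\mathscr{O}_{X_\bullet}$ agrees with $\operatorname*{colim} F(X_\bullet)$. For the latter I would use Lemma \ref{lem colimit simplicial} to pass to the semi-simplicial hyperresolution, where the totalization is visibly the Du Bois complex by its very definition.
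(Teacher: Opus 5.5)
Your proposal is correct and is essentially the paper's argument: RC-invariance reduces to $Rf_*\mathscr{O}_Y\simeq\mathscr{O}_X$, and the extension is $R\phi_*\underline{\Omega}^0_X$ because the Du Bois Mayer--Vietoris triangle sends abstract blow-up squares to bicartesian squares (hence pushouts in the opposite category), while $\underline{\Omega}^0_X=\mathscr{O}_X$ for smooth $X$. The only difference is that the paper simply cites Koll\'ar's theorem for $Rf_*\mathscr{O}_Y\simeq\mathscr{O}_X$ in the projective case and then compactifies and restricts, whereas you reprove it via torsion-freeness of $R^if_*\omega_Y$, the generic-rank computation, and Grothendieck duality; for that route you should likewise compactify before invoking torsion-freeness, and note that $R^df_*\omega_Y\simeq\omega_X$ itself follows from duality together with $f_*\mathscr{O}_Y=\mathscr{O}_X$.
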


\begin{proof}
For RC-invariance, it suffices to show that if $f: Y \rightarrow X$ is a dominant projective morphism of smooth connected varieties with rationally connected general fibers, then $R f_* \mathscr{O}_Y \simeq \mathscr{O}_X$. This is established in \cite[Theorem $7.1$]{dualizing} when $X$ and $Y$ are projective. The general case follows by choosing projectivizations and then restricting.

By Mayer-Vietoris \cite[Proposition $4.11$]{du bois}, the Du Bois complexes of an abstract blow-up square form a pushout square. Consequently, $R\phi_* \underline{\Omega}_X^0$ provides an extension.
\end{proof}

\begin{proposition}\label{prop structure}
Fix $j \geq 0$. The functor
\begin{align*}
    H^j(-, \mathcal{O}): \mathrm{Sm} \rightarrow \mathcal{D}(\mathbb{C})^{op}_{\geq 0}, \qquad X \mapsto H^j(X, \mathscr{O}_X)
\end{align*}
is RC-invariant. Let $RH^j(-,\mathscr{O}): \mathrm{Sch} \rightarrow \mathcal{D}(\mathbb{C})^{op}_{\geq 0}$ denote its extension. Then, for every variety $X$ and integer $i \geq 0$,
\begin{align*}
    H^i( RH^j(X, \mathcal{O})) \simeq \gr_0^F \gr_j^W H^{i+j}(X, \mathbb{C}).
\end{align*}
\end{proposition}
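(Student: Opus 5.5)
First, RC-invariance. Let $f: Y \rightarrow X$ be a dominant projective morphism of smooth connected varieties whose general fiber is rationally connected. By Proposition \ref{prop O}, applied with $S = \operatorname{Spec}\mathbb{C}$ or directly relative to $X$, we have $Rf_*\mathscr{O}_Y \simeq \mathscr{O}_X$. Taking hypercohomology on $X$ then gives isomorphisms $H^j(X,\mathscr{O}_X) \simeq H^j(Y,\mathscr{O}_Y)$. These are induced by pullback $f^*$, which is the contravariant functoriality required by the target $\mathcal{D}(\mathbb{C})^{op}_{\geq 0}$. Hence $H^j(-,\mathscr{O})$ is RC-invariant, and Theorem \ref{thm extension} produces the extension $RH^j(-,\mathscr{O})$.

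Next, the identification. Choose a cubical hyperresolution $\epsilon_\bullet: X_\bullet \rightarrow X$. By \eqref{eqn colim} and Lemma \ref{lem colimit simplicial}, $RH^j(X,\mathscr{O})$ is the cochain complex
\[
H^j(X_0,\mathscr{O}_{X_0}) \rightarrow H^j(X_1,\mathscr{O}_{X_1}) \rightarrow \cdots,
\]
with differentials the alternating sums of pullbacks. I treat the projective case first, so that each $X_p$ is smooth projective, which I can arrange. Hodge theory identifies $H^j(X_p,\mathscr{O}) \simeq \gr_F^0 H^j(X_p,\mathbb{C})$, and this identification is functorial for pullbacks. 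The complex above is therefore $\gr_F^0$ applied to the $q=j$ row of the $E_1$-page of the weight spectral sequence \eqref{eqn ratl weight ss}. Since $\gr_F^0$ is exact on morphisms of pure Hodge structures by strictness, its $i$th cohomology is $\gr_F^0 E_2^{i,j}$. Degeneration at $E_2$ then gives $E_2^{i,j} \simeq \gr_j^W H^{i+j}(X,\mathbb{C})$ compatibly with $F$, because $F$ induces the Hodge filtration on the $E_2 = E_\infty$ terms. This yields the formula.

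For non-projective $X$, the $X_p$ are smooth but not proper. I would compactify the hyperresolution to a simplicial system of smooth projective varieties with snc boundary, and use the mixed Hodge complex of $X$ built from log de Rham complexes. The main obstacle is matching $H^j(X_p,\mathscr{O})$, which is infinite-dimensional for affine pieces, with the Hodge-theoretic graded piece. To handle this, I would first use the blow-up pushout property and the uniqueness in Theorem \ref{thm extension} to reduce to properties that hold for projective $X$. Concretely, I would compare two functors on $\mathrm{Sch}$ that send abstract blow-up squares to pushouts. Both sides satisfy long exact Mayer--Vietoris sequences for abstract blow-up squares, since the right side comes from the exact sequences of mixed Hodge structures and $\gr_F^0\gr^W$ is exact. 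Using Nagata compactification and induction on dimension, I would then deduce the formula for arbitrary $X$ from the projective case. If this reduction fails, I would restrict the formula to projective (or proper) $X$, where the argument above is complete.
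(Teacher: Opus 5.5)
For projective $X$, your argument is the paper's. You write $RH^j(X,\mathscr{O})$ as the complex $H^j(X_0,\mathscr{O}_{X_0})\to H^j(X_1,\mathscr{O}_{X_1})\to\cdots$, recognize it as $\gr_F^0$ of the $j$th row of the $E_1$-page of \eqref{eqn ratl weight ss}, and conclude by strictness and $E_2$-degeneration. Your RC-invariance step via Proposition~\ref{prop O} also matches the paper. Note that the paper's proof relies on \eqref{eqn ratl weight ss}, which is only set up for projective $X$, so it too only covers that case.

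Your proposed extension to non-projective $X$ cannot succeed, because the formula is false there. Take $X=\mathbb{A}^1$. It is smooth, so $RH^0(X,\mathscr{O})=H^0(\mathbb{A}^1,\mathscr{O})=\mathbb{C}[t]$ in degree $0$, whereas $\gr_0^F\gr_0^W H^0(\mathbb{A}^1,\mathbb{C})=\mathbb{C}$. Similarly, let $C$ be a smooth projective curve $\bar C$ of genus $g\geq 1$ minus a point. Then $H^1(C,\mathscr{O}_C)=0$, since $C$ is affine, but $\gr_0^F\gr_1^W H^1(C,\mathbb{C})\simeq H^1(\bar C,\mathscr{O}_{\bar C})\simeq\mathbb{C}^g$.

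The reduction is also missing a tool. $RH^j$ only turns abstract blow-up squares into pushouts and has no localization property for an open immersion $X\subset\bar X$. A Nagata compactification therefore gives no relation between $RH^j(X,\mathscr{O})$ and $RH^j(\bar X,\mathscr{O})$. Matching Mayer--Vietoris sequences also cannot help, since the comparison already fails on smooth non-proper $X$, the would-be base case. The correct statement is the one in your fallback: restrict to projective $X$, where your argument is complete.
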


\begin{proof}
RC-invariance follows from Proposition \ref{prop O} by taking cohomology groups. Let $X$ be a variety with cubical hyperresolution $\epsilon_\bullet: X_\bullet \rightarrow X$. By construction,
\begin{align*}
    RH^j(X, \mathscr{O}) \simeq \bigg( H^j(X_0, \mathscr{O}_{X_0}) \rightarrow H^j(X_1, \mathscr{O}_{X_1}) \rightarrow \ldots \bigg).
\end{align*}
This complex is obtained by applying $\gr_0^F$ to the $j$th row of the weight spectral sequence \eqref{eqn ratl weight ss}. Since the Hodge filtration is strict, taking cohomology gives
\begin{align*}
    H^i\big( RH^j(X, \mathscr{O}) \big) \simeq \gr_0^F \gr_j^W H^{i+j} \big( X, \mathbb{C} \big),
\end{align*}
as claimed.
\end{proof}

We now consider the Chow group of zero-cycles. This requires some care, as $CH_0$ is functorial only for proper morphisms. Accordingly, let $\mathrm{Sm}^{\mathrm{pr}} \subset \mathrm{Sm}$ and $\mathrm{Sch}^{\mathrm{pr}} \subset \mathrm{Sch}$ denote the subcategories with the same objects as $\mathrm{Sm}$ and $\mathrm{Sch}$, respectively, and with maps the proper morphisms.

\begin{proposition}\label{prop CH_0}
The functor
\begin{align*}
    CH_0: \mathrm{Sm}^{\mathrm{pr}} \rightarrow \mathcal{D}(\mathbb{Z})_{\geq 0}, \hspace{.4cm} X \mapsto CH_0(X),
\end{align*}
is properly RC-invariant functor. It admits an extension $RCH_0: \mathrm{Sch}^{\mathrm{pr}} \rightarrow \mathcal{D}(\mathbb{Z})_{\geq 0}$ which sends abstract blow-up squares to pushout squares. Moreover, for every projective variety $X$,
\begin{align*}
    H_0 \big( RCH_0(X)) \simeq CH_0(X \big).
\end{align*}
\end{proposition}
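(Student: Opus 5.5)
The argument has three parts: proper RC-invariance, the existence of the extension, and the identification of $H_0$.

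\emph{Proper RC-invariance.} Let $f: Y \rightarrow X$ be a dominant projective morphism of smooth connected varieties whose general fiber is rationally connected. We must show that $f_*: CH_0(Y) \rightarrow CH_0(X)$ is an isomorphism.

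Surjectivity is the easy half. Since $f$ is projective and dominant, it is surjective. Every closed point $x \in X$ is therefore the image of a closed point of $Y$, and $f_*$ sends a closed point to its image (with degree one over $\mathbb{C}$).

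Injectivity is the real content. I would use the theorem of Graber--Harris--Starr. For $x \in X$, the fiber $Y_x$ contains a rationally chain connected subvariety, and all closed points of $Y_x$ are rationally equivalent in $Y$; over the general locus this comes from rational connectedness. For special fibers I would reduce to the general fiber by specialization of rational curves, using that being joined by a chain of rational curves is a closed condition (Koll\'ar, \emph{Rational curves}, IV.3). Given this, the fiberwise degree-zero part of $CH_0(Y)$ vanishes. One then shows that $\ker f_*$ is generated by such fiberwise classes together with lifts of relations on $X$. To lift a relation, take a curve $C \subset X$ and a rational function on it, and choose a multisection $C' \subset Y$ over $C$; GHS supplies sections over curves meeting the locus with rationally connected fibers, and in general one moves $C$. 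Pushing forward along $C' \rightarrow C$ and correcting by fiberwise classes gives the lift. I expect this injectivity step to be the main obstacle, since it needs care for curves lying in the bad locus. An alternative is a decomposition-of-the-diagonal argument after compactifying $f$: compactify to a projective $\bar f$, use that $CH_0$ of the generic fiber over $\mathbb{C}(X)$ is $\mathbb{Z}$, and spread out.

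\emph{Existence of the extension.} The extension criterion Theorem \ref{thm extension} is stated for all morphisms, so I would invoke its proper-morphism analogue from the same reference. It suffices to check the smooth blow-up condition, and this holds verbatim as in \S\ref{subsection properties}: both vertical maps in \eqref{eqn RC} are birational morphisms or projective bundles, hence equivalences by RC-invariance, so the square is a pushout.

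\emph{Identification of $H_0$.} Choose a cubical hyperresolution of $X$. Then $RCH_0(X)$ is the homotopy colimit, so $H_0 RCH_0(X)$ is the cokernel of $\bigoplus CH_0(X_1) \rightrightarrows CH_0(X_0)$, the difference of the two maps, computed via the associated semi-simplicial hyperresolution by Lemma \ref{lem colimit simplicial}. It remains to show this cokernel is $CH_0(X)$. I would prove that $CH_0$ on projective schemes sends abstract blow-up squares to right exact sequences
\[
CH_0(E) \rightarrow CH_0(Z) \oplus CH_0(Y) \rightarrow CH_0(X) \rightarrow 0,
\]
which is the localization sequence of Fulton 1.8 combined with the isomorphism $Y \setminus E \simeq X \setminus Z$. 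Since $H_0$ of a pushout of connective complexes is the pushout of the $H_0$'s, induction along the iterated abstract blow-up squares that build the hyperresolution gives $H_0 RCH_0(X) \simeq CH_0(X)$.
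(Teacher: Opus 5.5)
\textbf{The gap is in the injectivity half of proper RC-invariance.}

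Your reduction is sound once the lifting step is repaired. Moving $C$ is not allowed, since it changes the relation. Pushing $\mathrm{div}(f^*\phi)$ forward along a degree-$d$ multisection gives $d\,\mathrm{div}(\phi)$, not $\mathrm{div}(\phi)$. Instead, choose $\psi$ on the multisection with norm $\phi$; such $\psi$ exists because $\mathbb{C}(C)$ is $C_1$ (Tsen).

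After this repair, $\ker f_*$ is generated by degree-zero cycles supported on single fibers. So injectivity is equivalent to: any two closed points of an \emph{arbitrary} fiber $Y_x$ are rationally equivalent in $Y$.

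This does not follow from specializing rational chains as in \cite[IV.3]{rational curves}. Specialization only controls limits of general fibers along one-parameter families $Y\times_X\widetilde{C}$. Since $f$ need not be equidimensional, $Y_x$ is covered by many such limits, and nothing chains them together. For the blow-up of a point on a surface, each limit is a single point of the exceptional curve.

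Your argument also never uses that $X$ is smooth, and any correct argument must. Take the blow-up $Y\to X$ of the vertex of the projective cone over an elliptic curve $E$. The general fiber is a point, yet distinct points of the fiber $E$ over the vertex are not rationally equivalent in $Y$. Indeed, $Y$ is a $\mathbb{P}^1$-bundle over $E$, and the projection identifies $CH_0(Y)$ with $CH_0(E)$.

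The paper notes that rational chain connectedness of all fibers in the non-equidimensional case is new. It obtains this only from its MMP-based main theorem (Corollary \ref{intro-cor fibers}(1)), and merely remarks that this route gives an alternative proof. For the proposition itself, it simply invokes \cite[Theorem 2.5]{chow constant}.

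Your decomposition-of-the-diagonal fallback also fails integrally, because the degree map on $CH_0$ of the generic fiber need not be an isomorphism. The conic $xu^2+yv^2=w^2$ over $\mathbb{C}(x,y)$ has no rational point, since the quaternion algebra $(x,y)$ is nonsplit. So a smooth projective model $Y\to\mathbb{P}^2$ is an RC fibration whose generic fiber has index $2$. Spreading out from $CH_0(Y_{\bar\eta})=\mathbb{Z}$ therefore controls $\ker f_*$ only up to torsion.

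\textbf{The other two parts follow the paper's route.}

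For the extension, the paper notes that Theorem \ref{thm extension} does not apply directly. It uses \cite[Theorem 2.2]{guillen navarro} instead, which works with descent categories for smooth varieties and proper morphisms. Your check of the smooth blow-up condition is the same as the paper's.

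For $H_0$, the paper likewise reduces to showing that $CH_0$ sends abstract blow-up squares to pushouts. It then concludes by uniqueness of the extension rather than by your induction; the two are equivalent.

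However, exactness at $CH_0(Z)\oplus CH_0(Y)$ is not a formal consequence of the two localization sequences together with $Y\setminus E\simeq X\setminus Z$. You must show that every class in $\ker\bigl(CH_0(Z)\to CH_0(X)\bigr)$ comes from a class on $E$ that dies in $Y$. As in the paper (and Fulton's Example 1.8.1), this is done by lifting each relation $\mathrm{div}(\phi)$ on a curve $C\not\subset Z$ to the strict transform of $C$ in $Y$.
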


\begin{proof}
By \cite[Theorem $2.5$]{chow constant}, the functor $CH_0$ is properly
RC-invariant. To construct its extension, we apply the argument of \cite[Theorem $2.2$]{guillen navarro}. This approach is more flexible than the one developed in \cite{infinity cat}; see \cite[Variant $2.5$]{guillen navarro} and subsequent examples for further applications. It is important that we work in the abelian category of abelian groups to access the formalism of descent categories. The resulting extension can be computed using hyperresolutions, as in Theorem \ref{thm extension}.

For the final isomorphism, it is enough to show that $CH_0$ sends every abstract blow-up square to a pushout of abelian groups, as in the proof of Proposition \ref{prop pi1}.

Consider an abstract blow-up square of the form \eqref{eqn abus}. Since $f \amalg i: Y \amalg Z \rightarrow X$ is dominant, the induced map
\begin{align*}
    CH_0(Y) \oplus_{CH_0(E)} CH_0(Z) \rightarrow CH_0(X)
\end{align*}
is surjective. Moreover, the classes of any two points in $Y \amalg Z$ with the same image in $X$ are identified in the pushout $ CH_0(Y) \oplus_{CH_0(E)} CH_0(Z)$. To prove injectivity, it therefore suffices to show that every relation $[ \mathrm{div} (\phi)] = 0$ in $CH_0(X)$, defined by a rational function $\phi \in \mathbb{C}(C)^\times$ on an irreducible curve $C \subset X$, lifts to the pushout. If $C \subset Z$, the relation is accounted for in $CH_0(Z)$. Otherwise, the strict transform $\widetilde{C} \subset Y$ maps birationally onto $C$, and so the relation is accounted for in $CH_0(Y)$.
\end{proof}

Our final example concerns codimension two cycles. For a smooth projective variety $X$, define 
\begin{align*}
    F^2 CH^2(X) := \ker \bigg( AJ_X:CH^2(X)_{\hom} \rightarrow J^3(X) \bigg),
\end{align*}
where $CH^2(X)_{\hom} \subset CH^2(X)$ denotes the subgroup of null-homologous codimension two cycles. The notation is inspired by the Bloch-Beilinson conjecture, which posits that this is one piece of a filtration on Chow groups. 

It is straightforward to see that $F^2 CH^2(X)$ is a birational invariant of smooth projective varieties. Moreover, if $CH_0(X) \simeq \mathbb{Z}$, then $F^2 CH^2(X) = 0$ by \cite[Theorem $6.24$]{voisin lectures}.

This group arises in the study of the negative $K$-group $K_{-n+2}(X)$ in \cite[Theorem $9.6$]{bps}. In that work we used an ad hoc version of the MMP argument developed here. We now show that $F^2 CH^2(X)_\mathbb{Q}$ fits naturally into this framework as an RC-invariant functor.

\begin{proposition}\label{prop F^2}
The assignment
\begin{align*}
    F^2CH^2_\mathbb{Q}: \mathrm{Sm} \rightarrow \mathcal{D}(\mathbb{Q})_{\geq 0}^{\mathrm{op}}, \qquad X \mapsto F^2 CH^2(\overline{X})_\mathbb{Q},
\end{align*}
where $\overline{X}$ is a smooth projectivization of $X$, defines an RC-invariant functor. 
\end{proposition}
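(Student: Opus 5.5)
We have to show two things: the assignment is a well-defined functor on $\mathrm{Sm}$, and for a dominant projective morphism $f\colon Y\to X$ of smooth connected varieties with rationally connected general fiber, the induced map $F^2CH^2(\overline{Y})_\mathbb{Q}\to F^2CH^2(\overline{X})_\mathbb{Q}$ is an isomorphism. The cohomological (op) convention indicates that the natural map is pullback. So first we fix the functoriality. A morphism $g\colon X\to X'$ in $\mathrm{Sm}$ induces a rational map $\overline{X}\dashrightarrow\overline{X'}$. After a resolution of indeterminacy $\widetilde{X}\to\overline{X}$ this becomes a morphism $\widetilde{g}$, and we define the map as
\[
    \pi_*\widetilde{g}^{\,*}\colon F^2CH^2(\overline{X'})_\mathbb{Q}\to F^2CH^2(\widetilde{X})_\mathbb{Q}\to F^2CH^2(\overline{X})_\mathbb{Q},
\]
where $\pi\colon\widetilde{X}\to\overline{X}$ is the resolution.

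Pullbacks and birational pushforwards preserve $CH^2_{\hom}$ and commute with Abel--Jacobi maps, since $AJ$ is functorial for correspondences. For a birational morphism $\pi$ of smooth projective varieties, $\pi_*\pi^*=\mathrm{id}$ on $CH^2$. The blow-up formula, combined with weak factorization, identifies the kernel of $\pi_*$ with cycles pushed forward from exceptional loci, built from $CH^1$ and $CH^0$ of the centers. The part of that kernel that is null-homologous is detected by $AJ$, through the Picard and Albanese varieties of the centers. Hence $F^2CH^2$ is a birational invariant. This makes the map independent of all choices, compatible with composition, and hence a functor.

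For RC-invariance, we reduce via birational invariance to the following setting: $f\colon Y\to X$ is a morphism of smooth projective varieties whose general fiber $Y_\eta$ is rationally connected. The key input is Bloch--Srinivas decomposition of the diagonal for the geometric generic fiber. Since $CH_0(Y_{\bar\eta})=\mathbb{Z}$, the relative diagonal decomposes rationally. Spreading this out gives, in $CH^{d}(Y\times_X Y)_\mathbb{Q}$, a relation
\[
    N\,\Delta_{Y/X} = N\,(\sigma\times_X Y) + \Gamma,
\]
where $d=\dim Y$, $\sigma$ is a multisection of degree $N$, and $\Gamma$ is supported over a proper closed subset $D\subset X$ union $Y\times_X T$ for some proper $T\subset Y$. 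Letting this correspondence act on $CH^2(Y)_\mathbb{Q}$ shows the following: modulo $f^*CH^2(X)_\mathbb{Q}$, every class comes from cycles supported on $f^{-1}(D)$, or from correspondences factoring through lower-dimensional varieties, whose contributions land in $CH^1$ and $CH^0$ of smooth models of those varieties. On $F^2$ these contributions die. Indeed, $AJ$ is injective on algebraically trivial divisor classes, and codimension-one pieces with trivial $AJ$ vanish rationally. This gives surjectivity of $f^*$ on $F^2CH^2_\mathbb{Q}$.

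Injectivity comes from a push-pull identity. We have $f_*(f^*\alpha\cdot[\sigma]) = N\alpha$, where $\sigma$ is viewed as a codimension-$(\dim Y-\dim X)$ cycle. The cycle $\sigma$ is a multisection, i.e.\ generically finite over $X$. After replacing it by a resolution $\widetilde{\sigma}\to X$, which is generically finite, we get a pushforward-pullback that is multiplication by $N$ on $CH^2(X)_\mathbb{Q}$. This operation preserves $F^2$, so $f^*$ is injective rationally. We must work with $\mathbb{Q}$-coefficients here because of $N$ and the decomposition of the diagonal.

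The main obstacle is the surjectivity step. We must verify carefully that the error terms in the correspondence, namely those supported over $D$ and those through $T$, really act as zero on $F^2CH^2(Y)_\mathbb{Q}$ modulo $f^*$. Terms over $D$ are handled by induction on $\dim X$: restrict to a resolution of $f^{-1}(D)\to D$ and use that classes factoring through a divisor act on codimension-two cycles through $CH^1$. If that becomes messy, the fallback is to run the argument via an MRC-type reduction. Compare $F^2CH^2(Y)$ with $F^2CH^2$ of $X\times\mathbb{P}^r$, reached from $Y$ by rational chain moves, and use the projective bundle formula, $F^2CH^2(X\times\mathbb{P}^r)=F^2CH^2(X)$. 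Here $F^2CH^1(X)=0$ because $AJ$ is injective on $CH^1_{\hom}$.
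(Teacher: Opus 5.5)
Your proof is correct in outline, but for surjectivity it takes a different route from the paper. Injectivity is the same push--pull; the paper just says ``since $f$ is dominant''. For surjectivity, the paper does not spread out a decomposition of the diagonal. It notes that a general fiber $F$ has $CH_0(F)\simeq\mathbb{Z}$, so $F^2CH^2(F)=0$ by Bloch--Srinivas (Voisin's Theorem 6.24). It then cites Saito's Theorem 0.5, a relative result that converts this fiberwise vanishing, together with injectivity, into surjectivity of $f^*$. You instead decompose the diagonal of the generic fiber over $\mathbb{C}(X)$, spread it out, and let it act on $CH^2(Y)_\mathbb{Q}$. In effect you reprove the needed case of Saito's theorem by hand. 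Your version is self-contained, using only Bloch--Srinivas and the functoriality of Abel--Jacobi maps under correspondences. The paper's version is shorter, and its only hypothesis on the fibers is $F^2CH^2(F)=0$.

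Three points need attention for your argument to be complete.

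\emph{First,} ``modulo $f^*$'' works only because every term is the image of $\alpha$ under a correspondence. Write $N\alpha=f^*\gamma+j_*\beta$. Here $\gamma=(f\circ\iota)_*\iota^*\alpha$ for a resolution $\iota\colon\widetilde{\sigma}\to Y$. Also $\beta=\widetilde{\Gamma}_*\alpha\in CH_{\dim Y-2}(\widetilde{W})_\mathbb{Q}$, where $\widetilde{W}$ resolves $W=T\cup f^{-1}(D)$. Hence $\alpha\in F^2$ forces $\gamma\in F^2CH^2(X)_\mathbb{Q}$. It also forces $\beta$ to be homologically trivial with vanishing Abel--Jacobi image, so $\beta=0$, because Abel--Jacobi is an isomorphism on $CH^1_{\hom}$ and $CH^0_{\hom}=0$. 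In particular, the terms over $D$ need no induction on $\dim X$. They are supported on $Y\times f^{-1}(D)$ and are handled exactly like the $T$-terms.

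\emph{Second,} the decomposition requires $CH_0$ of the generic fiber to be trivial over $\overline{\mathbb{C}(Y)}$, not only over $\overline{\mathbb{C}(X)}$. This holds because rational connectedness survives extension of algebraically closed fields.

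\emph{Third,} drop the fallback. A rationally connected fibration need not be birational to $X\times\mathbb{P}^r$; non-rational conic bundles give examples. So that comparison is not available.
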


For a smooth variety $X$, the group $CH^2(\overline{X})_\mathbb{Q}$ is independent of the choice of projectivization, since it is a birational invariant. Likewise, a morphism $f: Y \rightarrow X$ induces, after choosing compatible projectivizations, a well-defined map $CH^2(\overline{X})_\mathbb{Q} \rightarrow CH^2(\overline{Y})_\mathbb{Q}$. Thus the assignment is functorial.

\begin{proof}
As in the case of $CH_0$, birational invariance shows that $F^2 CH^2(\overline{X})_\mathbb{Q}$ is independent of the choice of projectivization and is functorial. 

Let $f: Y \rightarrow X$ be a dominant morphism of smooth projective varieties whose general fiber is rationally connected. Since $f$ is dominant, the pullback $CH^2(X)_\mathbb{Q} \rightarrow CH^2(Y)_\mathbb{Q}$ is injective. Hence so is its restriction
\begin{align*}
    f^*: F^2 CH^2(X)_\mathbb{Q} \rightarrow F^2 CH^2(Y)_\mathbb{Q}.
\end{align*}
Moreover, the general fiber $F$ is rationally connected, so $CH_0(F) \simeq \mathbb{Z}$. By the preceding discussion, $F^2 CH^2(F) = 0$. Therefore the hypotheses of \cite[Theorem $0.5$]{saito} are satisfied. The theorem implies that $f^*$ is surjective and hence an isomorphism.
\end{proof}

\subsection{Proof of the rigidity theorem}\label{subsection pf main thm}

The goal of this section is to prove our main Theorem \ref{intro-theorem main}. Throughout, let $G = RF$ be the extension of an RC-functor $F$. The argument applies without modification when $F$ is a properly RC-functor or a relative RC-functor, as in Remark \ref{rmk P^1-invt variants}.

\begin{definition}
A dominant projective morphism $f: Y \rightarrow X$ is \emph{$G$-compatible} if for any closed subvariety $Z \subset X$, the map $G(f^{-1}Z) \rightarrow G(Z)$ is an equivalence.

A variety $X$ is \emph{$G$-trivial} if it admits a $G$-compatible resolution $f:Y \rightarrow X$.
\end{definition}

By Lemma \ref{lemma sm blow-up compatibility}, a variety is $G$-trivial if and only if every resolution is $G$-compatible. The following lemma shows that it is enough to verify $G$-compatibility on irreducible subvarieties.

\begin{lemma}\label{lem reduce irred}
A morphism $f: Y \rightarrow X$ is $G$-compatible if and only if for any irreducible subvariety $Z \subset X$, the map $G(f^{-1}(Z))\rightarrow G(Z)$ is an equivalence.
\end{lemma}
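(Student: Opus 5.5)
The forward direction is immediate, since irreducible subvarieties are particular closed subvarieties. For the converse, I will induct on $\dim Z$ and, for fixed dimension, on the number of irreducible components of $Z$. Because $G$ is insensitive to non-reduced structure (Theorem \ref{thm extension}), I may replace $Z$ and $f^{-1}(Z)$ by their reductions throughout.

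If $Z$ is empty, both sides are the initial object, since $f^{-1}(\emptyset)=\emptyset$. If $Z$ is irreducible, the hypothesis applies directly. Otherwise write $Z = Z_1 \cup Z_2$, where $Z_1$ is an irreducible component and $Z_2$ is the union of the remaining components. The square
\[
\begin{tikzcd}
Z_1 \cap Z_2 \arrow[r] \arrow[d] & Z_2 \arrow[d] \\
Z_1 \arrow[r] & Z
\end{tikzcd}
\]
is an abstract blow-up square. To see this, take the proper map $Z_1 \amalg Z_2 \to Z$. The most convenient form is the square with top-right corner $Z_1$, right vertical map $Z_1 \to Z$, bottom-left corner $Z_2$, and top-left corner $Z_1 \cap Z_2 = Z_1 \times_Z Z_2$. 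Here $Z_1 \to Z$ is proper, and it is an isomorphism from $Z_1 \setminus (Z_1\cap Z_2)$ onto $Z \setminus Z_2$. The latter is open in $Z$ and reduced, so this really is an isomorphism of reduced schemes. Hence $G$ sends this square to a pushout square.

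Pulling back along $f$ gives the analogous square for $f^{-1}(Z) = f^{-1}(Z_1) \cup f^{-1}(Z_2)$, with corner $f^{-1}(Z_1 \cap Z_2) = f^{-1}(Z_1) \cap f^{-1}(Z_2)$. This is again an abstract blow-up square, since base change preserves properness, closed embeddings and the isomorphism off the closed subset. Applying $G$, I get two pushout squares together with a natural map between them induced by $f$.

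On three corners this map is an equivalence. On $Z_1$ this holds by hypothesis. On $Z_2$ and on $Z_1 \cap Z_2$ it holds by induction: $Z_2$ has fewer components, and every component of $Z_1\cap Z_2$ has dimension $< \dim Z_1$, so $Z_1 \cap Z_2$ is earlier in the induction ordered lexicographically by (maximal dimension, number of components of maximal dimension). Since a map of pushout diagrams that is an equivalence on the three defining corners induces an equivalence of pushouts, $G(f^{-1}Z) \to G(Z)$ is an equivalence.

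The only point requiring care is choosing the induction measure so that both $Z_2$ and $Z_1\cap Z_2$ are strictly smaller. Ordering by (dimension, number of top-dimensional components) handles this, since $Z_1 \cap Z_2$ has strictly smaller dimension than the component $Z_1$ it sits in, and removing $Z_1$ does not increase the count.
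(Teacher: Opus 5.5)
Your argument is the paper's: write $Z = Z_1 \cup Z_2$, apply $G$ to the abstract blow-up square for $Z$ and to its pullback along $f$, and conclude from the equivalences on the corners $Z_1$, $Z_2$, $Z_1 \cap Z_2$, with the last two handled by induction. Your check that these squares really are abstract blow-up squares, including the handling of reduced structures, is more careful than the paper's.

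One correction is needed in your last paragraph. The measure there, (dimension, number of top-dimensional components), does not strictly decrease for $Z_2$ when $Z_1$ is not of maximal dimension. For example, take $Z = S_1 \cup S_2 \cup C$ with surfaces $S_i$, a curve $C$, and $Z_1 = C$. Then $Z_2 = S_1 \cup S_2$ has the same measure as $Z$, and ``does not increase'' is not enough. There are two fixes. Either choose $Z_1$ of maximal dimension, or keep the measure you stated at the outset, (dimension, then total number of components). That second measure is the paper's, and it decreases strictly for both $Z_2$ and $Z_1 \cap Z_2$ whatever component $Z_1$ you pick.
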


\begin{proof}
The only if direction is obvious. Conversely, suppose that $G(f^{-1}(Z)) \simeq G(Z)$ is an equivalence for any irreducible subvariety. We argue the same holds for all subvarieties $Z \subset X$ by induction on $\dim Z$ and on the number of components of $Z$. Write $Z = Z_1 \cup Z_2$, where $Z_1$ is irreducible and $Z_2$ has strictly fewer irreducible components than $Z$.  Consider the two pushout squares
\[
\begin{tikzcd}[row sep=2.2em, column sep=2.6em]
G \big( Z_1 \cap Z_2 \big) \arrow[r] \arrow[d] & G \big( Z_1 \big) \arrow[d] \\
G \big( Z_2 \big) \arrow[r] & G \big( Z \big),
\end{tikzcd}
\qquad
\begin{tikzcd}[row sep=2.2em, column sep=2.6em]
G \big( f^{-1}(Z_1 \cap Z_2) \big) \arrow[r] \arrow[d] & G \big( f^{-1}(Z_1) \big) \arrow[d] \\
G \big( f^{-1}(Z_2) \big) \arrow[r] & G \big( f^{-1}(Z) \big).
\end{tikzcd}
\]
By the induction hypothesis, the natural maps give equivalences $G( f^{-1}(Z_1) ) \simeq G(Z_1)$, $G( f^{-1}(Z_2)) \simeq G(Z_2)$, and $G( f^{-1}(Z_1 \cap Z_2)) \simeq G(Z_1 \cap Z_2)$. It follows that the diagrams have equivalent pushouts $G(f^{-1}(Z)) \simeq G(Z)$.
\end{proof}

We may now state our main theorem. 

\begin{theorem}[{\(\!=\) Theorem~\ref{intro-theorem main}}]\label{theorem main}
Suppose \(f:Y\to X\) is a dominant projective morphism between connected varieties of klt type whose general fiber is rationally connected. Then \(f\) is \(G\)-compatible.
\end{theorem}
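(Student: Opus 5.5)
First reduce to resolutions. Since $X$ and $Y$ are of klt type, choose resolutions $\mu_X : \tilde X \to X$ and $\mu_Y:\tilde Y\to Y$; after further blowing up $\tilde Y$ we may assume $f$ lifts to $\tilde f:\tilde Y\to\tilde X$. The morphism $\tilde f$ is a dominant projective morphism of smooth varieties with rationally connected general fiber. Compositions of $G$-compatible maps are $G$-compatible, and $G$-compatibility of $f\circ\mu_Y = \mu_X\circ\tilde f$ together with that of $\mu_Y$ gives it for $f$, because preimages compose. So it suffices to prove two things: (a) every variety of klt type is $G$-trivial, and (b) every dominant projective morphism of smooth connected varieties with rationally connected general fiber is $G$-compatible. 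By Lemma~\ref{lem reduce irred} we only test irreducible $Z$.

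For (b) I would argue by induction on $\dim Z$. Via Lemma~\ref{lemma sm blow-up compatibility}, blow up $X$ (and $Y$ compatibly) so that $Z$ is replaced by an snc divisor. The problem then reduces, by Mayer--Vietoris over strata (the pushouts as in Lemma~\ref{lem reduce irred}), to smooth $Z$. Over the generic point of a smooth $Z$, the preimage $f^{-1}(Z)$ has a component dominating $Z$ with rationally connected general fiber. This follows from Graber--Harris--Starr applied to a curve through a general point of $Z$, plus Theorem~\ref{thm hm} for chain connectedness. The remaining components lie over proper closed subsets, which are handled by induction. This step is delicate, since $f^{-1}(Z)$ may have several dominating components whose incidences must cancel. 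I would handle it with the same MMP machinery as in (a), applied to the smooth pair $(Y,0)$ over $X$. In that sense (b) is really the relative version of (a) and should be proved simultaneously.

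The core, and the main obstacle, is (a) together with its relative form: for a $\mathbb{Q}$-factorial klt pair $(Y,\Gamma)$, the steps of a $(K_Y+\Gamma)$-MMP over $X$ are $G$-compatible. Concretely, fix irreducible $Z\subset X$, and use Lemma~\ref{lem plt blow-up} to replace $X$ near $\eta_Z$ by a $\mathbb{Q}$-factorial dlt model in which $Z$ becomes a divisor $E_0$. Take a log resolution $W\to X$ with $E=g^{-1}(Z)$ snc, and run a $(K_W+\Gamma_W)$-MMP over $X$, where $\Gamma_W$ puts coefficient $1-\epsilon$ or $1$ on the components of $E$, as in \cite{dFKX}. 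By Theorems~\ref{bchm pseudo-eff} and \ref{bchm not pseudo-eff} this is a sequence of divisorial contractions and flips ending in either a model over $X$ or a Fano contraction.

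The key claim is that each step $W_i\dashrightarrow W_{i+1}$ preserves $G$ of the preimage of $Z$. Resolving the step by a common resolution, the exceptional locus maps to the flipping or contracted locus. Relative anti-ampleness of $K+\Gamma$ along the contracted fibers makes those fibers rationally chain connected by Theorem~\ref{thm hm}(2). Via the pushout square of the abstract blow-up and induction on dimension, applied to the lower-dimensional centers and their preimages, $G$ is unchanged. For a Fano contraction $W_r\to V$, Lemma~\ref{lemma qdlt fano} gives a qdlt base of smaller dimension, and the contraction has rationally connected fibers, so induction on $\dim$ applies. The hardest point is making the induction hypothesis strong enough: it must apply to the singular, $\mathbb{Q}$-factorial klt or qdlt intermediate models and to morphisms whose fibers are only rationally chain connected. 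I would formulate it as \emph{``projective morphisms between klt-type varieties with rationally connected general fibers and relatively anti-ample log canonical class are $G$-compatible''}, and prove it by induction on $\dim Y$ using the MMP steps above.
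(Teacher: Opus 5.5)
Your reduction to (a) resolutions of klt-type varieties and (b) rationally connected fibrations of smooth varieties is fine and matches the paper. The gap is your key claim that each divisorial contraction or flip $h$ preserves $G$ of the preimage of $Z$.

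\textbf{The MMP steps.} You justify the claim by applying induction to $\mathrm{Ex}(h)\to h(\mathrm{Ex}(h))$, whose fibers are rationally chain connected by Theorem~\ref{thm hm}(2). But the induction hypothesis needs a klt-type source and target and rationally connected general fibers. An exceptional locus (for instance a flipping locus) is in general neither normal nor of klt type. Rational chain connectedness of fibers alone does not suffice: a nodal rational curve mapping to a point has $\mathcal{D}\simeq S^1$. The missing idea is to choose the boundary so that every exceptional locus lies inside a component $E_i$ of $\Gamma^{=1}$ of a dlt pair. Such an $E_i$ is normal and of klt type, and its image is of klt type: in Lemma~\ref{lem div cont} via Fujino's vanishing and the canonical bundle formula, and in Lemma~\ref{lem flip} as an lc center of $(V,\Gamma_V)$. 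One then applies induction to $E_i\to h(E_i)$, using that preimages of subsets of $h(\mathrm{Ex}(h))$ under $h$ and under $h|_{E_i}$ coincide.

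The paper gets this containment by putting coefficient $1$ on \emph{all} exceptional divisors of a log resolution of a $\mathbb{Q}$-factorial $X$. Then $\mathrm{Ex}(f_i)$ is divisorial and equals $\mathrm{Supp}\,\Gamma_i$, and the negativity lemma forces the MMP to end at $X$ itself. Your coefficients ``$1-\epsilon$ or $1$ on the components of $g^{-1}(Z)$'' give neither this containment nor control of the output model. The non-$\mathbb{Q}$-factorial case also needs a separate argument via Lemma~\ref{lem plt blow-up}: one compares two pushout squares, using induction on $\dim Z$ for $Z\setminus U$ and induction applied to resolutions of $E_0\to Z$. You mention that lemma but never actually use it.

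\textbf{The fibration case.} For (b), Graber--Harris--Starr does not give a component of $f^{-1}(Z)$ with rationally connected fibers over $Z$; it only gives sections over curves. That statement is essentially Koll\'ar's question (cf.\ Theorem~\ref{thm cartier collapse}), and it would not handle several dominating components anyway. An MMP on $(Y,0)$ loses all control of $f^{-1}(Z)$. What is needed:
\begin{enumerate}
\item Blow up so that $Z$ is a Cartier divisor with $E=f^{-1}(Z)_{\mathrm{red}}$ snc, and run the MMP on $(Y,E)$, equivalently on the klt pair $(Y,E-\epsilon f^*Z)$.
\item This MMP ends in a Fano contraction $h:Y_r\to W$, and the preimage of $Z$ stays the divisor $E_r=\Gamma_r^{=1}$, which is $h$-vertical.
\item Lemma~\ref{lemma qdlt fano} matches the strata of $E_r$ with those of $h(E_r)$, so induction on strata gives $G(E_r)\simeq G(h(E_r))$ (Lemma~\ref{lem fano fib}).
\item The flips and divisorial contractions are handled by the birational case \emph{already proved in dimension $n$}: every projective birational morphism between klt-type $n$-folds is then $G$-compatible.
\item Induction applies to $k:W\to X$ since $\dim W<n$.
\end{enumerate}
Step 5 shows your proposed induction statement is too weak. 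The map $k$ is an arbitrary rationally connected fibration between klt-type varieties, with no relatively anti-ample log canonical class. You need the full theorem in lower dimension, organized as the birational case in dimension $n$ first and then the fibration case.
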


The proof is by induction on $n = \dim Y$. The base case $n = 0$ is immediate. We maintain this notation throughout, assuming by induction that Theorem \ref{intro-theorem main} is known in dimension $< n$. There are two main cases to consider. The birational case, where $n = \dim X$, is treated in Proposition \ref{prop main A}. The case of positive-dimensional fibers, where $n > \dim X$, is treated in Proposition \ref{prop main B}.

\begin{remark}[{\bf Slight refinement}]\label{rmk R-equ}
The proof actually gives a slightly stronger statement, which we will need in \S \ref{section R-equ} to establish that the set of R-equivalence classes is an an RC-functor. Let $G: \mathrm{Sch} \rightarrow \mathcal{D}$ be a functor which sends abstract blow-up squares to pushout squares and which satisfies Definition \ref{defn P^1 invt} for all varieties of dimension $< n = \dim Y$, but possibly not in higher dimensions. Then, in the setting of Theorem \ref{theorem main}, one has $G(f^{-1}(Z)) \simeq G(Z)$ for every proper subvariety $Z \subsetneq X$.
\end{remark}

We will prove Theorem \ref{intro-theorem main} by running a suitable minimal model program and analyzing the resulting contractions. Let us start with the behavior of $G$ with respect to divisorial and flipping contractions.

\begin{lemma}\label{lem div cont}
Assume Theorem \ref{intro-theorem main} in dimension $< n$. Let $(Y, \Gamma)$ be a $\mathbb{Q}$-factorial dlt pair and suppose $f: Y \rightarrow X$ is a divisorial contraction corresponding to a $(K_Y + \Gamma)$-negative extremal ray. If $f$ contracts a component $E_i$ of $\Gamma^{=1}$, then $f$ is $G$-compatible.
\end{lemma}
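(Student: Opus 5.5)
Let $f\colon Y\to X$ contract the component $E_i$ of $\Gamma^{=1}$, and put $V=f(E_i)$; note $\dim E_i=n-1$. Since $f$ is an isomorphism off $E_i$, the square
\[
\begin{tikzcd}
E_i\cap f^{-1}(Z) \arrow[r] \arrow[d] & f^{-1}(Z) \arrow[d] \\
V\cap Z \arrow[r] & Z
\end{tikzcd}
\]
is an abstract blow-up square for every closed subvariety $Z\subset X$ (set-theoretically $f^{-1}(V\cap Z)=E_i\cap f^{-1}(Z)$, and $G$ ignores non-reduced structure). Applying $G$ gives a pushout square. Hence $G(f^{-1}Z)\simeq G(Z)$ as soon as $G(E_i\cap f^{-1}(Z))\to G(V\cap Z)$ is an equivalence. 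It therefore suffices to show that $f|_{E_i}\colon E_i\to V$ is $G$-compatible. We may replace $E_i$ by its normalization if needed, using that the normalization of a dlt center is a universal homeomorphism onto it, so $G$ does not see the difference. We then apply the induction hypothesis (Theorem~\ref{intro-theorem main} in dimension $n-1$) to $E_i\to V$.

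To apply induction we must check that $E_i\to V$ is a dominant projective morphism between connected varieties of klt type whose general fiber is rationally connected.
\begin{itemize}
\item \textbf{$E_i$ is of klt type.} By dlt adjunction, $(E_i,\mathrm{Diff}_{E_i}(\Gamma-E_i))$ is dlt, and $E_i$ is normal. Perturbing to remove the reduced part of the boundary, using $\mathbb{Q}$-factoriality of $Y$ to subtract small multiples of the other components of $\Gamma^{=1}$, shows $E_i$ is of klt type.
\item \textbf{Fibers of $E_i\to V$.} The map is projective with connected fibers, since $f$ has connected fibers. Because $-(K_Y+\Gamma)$ is $f$-ample, $-(K_{E_i}+\Gamma_{E_i})$ is ample over $V$. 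The general fiber of $E_i\to V$ is then a log Fano variety of klt type, hence rationally connected (Zhang / Hacon--McKernan). Alternatively, apply Theorem~\ref{thm hm}(2) to the perturbed klt pair on $E_i$ and use normality of the general fiber.
\item \textbf{$V$ is of klt type.} This is the step I expect to be the main obstacle. One option is to use that $X$ is of klt type, via the contraction of $(Y,\Gamma-\epsilon E_i)$ or a perturbation making the pair klt, and then argue that $V$ inherits klt type. A cleaner route is via Kawamata's subadjunction or canonical bundle formula: for the Fano-type fibration $(E_i,\Gamma_{E_i}-\epsilon')\to V$, with $-(K_{E_i}+\Gamma_{E_i})$ relatively ample, the base $V$ carries a boundary $\Delta_V$ with $(V,\Delta_V)$ klt (Ambro, Fujino--Gongyo). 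Here we need the pair on $E_i$ to be klt with $-(K+\Delta)$ relatively ample, which follows after perturbation since $E_i\to V$ is of Fano type over $V$.
\end{itemize}

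Once these hypotheses are verified, induction gives that $E_i\to V$ is $G$-compatible. The pushout argument above then shows $f$ is $G$-compatible, and by Lemma~\ref{lem reduce irred} it is enough to treat irreducible $Z$ throughout.
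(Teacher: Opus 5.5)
Your overall route is the paper's. The abstract blow-up square over $V\cap Z\subset Z$ reduces everything to $G$-compatibility of $f|_{E_i}\colon E_i\to V$. That follows from the induction hypothesis once $V$ is known to be of klt type, and the paper also gets the latter from the canonical bundle formula. Concretely, it perturbs $(E_i,\Gamma_{E_i})$ to a klt pair with $-(K_{E_i}+\Gamma_{E_i})$ still relatively ample. It then adds $\frac{1}{m}H$ for a general $H\in|-m(K_{E_i}+\Gamma_{E_i})|$ over $V$, so that the log canonical class becomes $\mathbb{Q}$-trivial over $V$. Your checks that $E_i$ is of klt type and that the general fiber of $E_i\to V$ is rationally connected are fine, and more explicit than the paper's.

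The gap is that you never show $V=f(E_i)$ is normal. Normality is part of being of klt type. Moreover, the canonical bundle formula gives a klt structure on the base of a contraction $g$, i.e.\ one with $g_*\mathscr{O}_{E_i}=\mathscr{O}_V$, and this condition forces the base to be normal. Set-theoretically connected fibers do not give this: the normalization of a cuspidal curve has connected fibers.

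The paper fills this step with a vanishing theorem for the lc center $E_i$ (Fujino). Since $-E_i-(K_Y+\Gamma-E_i)=-(K_Y+\Gamma)$ is $f$-ample, one gets $R^1f_*\mathscr{O}_Y(-E_i)=0$. Hence $\mathscr{O}_X\to f_*\mathscr{O}_{E_i}$ is surjective, so $(f|_{E_i})_*\mathscr{O}_{E_i}=\mathscr{O}_V$, and $V$ inherits normality from $E_i$. Because $Y$ is $\mathbb{Q}$-factorial, you can reduce this to relative Kawamata--Viehweg vanishing by replacing $\Gamma-E_i$ with the klt boundary $\Gamma-E_i-\epsilon\sum_{j\neq i}E_j$.

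Alternatively, your normalization remark can be made to work, but it must be applied to $V$, not to $E_i$. (The divisor $E_i$ is already normal, being a component of $\Gamma^{=1}$ on a dlt pair.) In the Stein factorization $E_i\to V^\nu\to V$, the map $V^\nu\to V$ is a finite universal homeomorphism, and the canonical bundle formula shows $V^\nu$ is of klt type. A Noetherian induction with abstract blow-up squares then shows that $G$ does not distinguish $V^\nu$ from $V$, or corresponding closed subsets. As written, though, the step ``$V$ is of klt type'' is not justified.
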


\begin{proof}
Consider the restriction $g = f|_{E_i}: E_i \rightarrow T$. We claim that $T$ is of klt type. Granted this, since $\dim E_i < n$, our induction hypothesis implies $g$ is $G$-compatible.

By the vanishing theorem in \cite[Theorem $8.1$]{fujino log mmp} applied to the lc center $E_i$, we have $(f|_{E_i})_*\mathcal{O}_{E_i} \simeq \mathcal{O}_{T}$. Since $E_i$ is normal, this implies $T$ is normal. To show $T$ is of klt type, we apply the canonical bundle formula. Perturb the boundary on the dlt pair $(E_i, \Gamma_{E_i})$ slightly so that it is klt and $-(K_{E_i}+\Gamma_{E_i})$ is still $g$-ample. Take a general divisor $H \in | -m(K_{E_i}+\Gamma_{E_i})|_{T}$ for $m \gg 0$ so that $(E_i, \Gamma_{E_i} + \frac{1}{m} H)$ is still klt. By construction,
\begin{align*}
    K_{E_i} + \Gamma_{E_i} + \frac{1}{m} H \sim_{\mathbb{Q},g} 0.
\end{align*}
Since $T$ is normal, the canonical bundle formula in \cite[Theorem $7.1$]{canonical bundle} implies that $T$ is of klt type.

Now let $Z \subset X$ be a closed subvariety. Consider the pushout square
\begin{equation*}
\begin{tikzcd}
G \big( f^{-1}(Z \cap T)) \big) \arrow[r] \arrow[d] & G \big( f^{-1}(Z) \big) \arrow[d] \\
G\big( Z \cap T \big) \arrow[r] & G \big( Z \big).
\end{tikzcd}
\end{equation*}
Since $g = f|_{E_i}$ is $G$-compatible, we have an equivalence $G(f^{-1}(Z \cap T_i)) \simeq G(Z \cap T_i)$. The pushout square shows $G(f^{-1}(Z)) \simeq G(Z)$.
\end{proof}

\begin{lemma}\label{lem flip}
Assume Theorem \ref{intro-theorem main} in dimension $< n$. Let $(Y, \Gamma)$ be a $\mathbb{Q}$-factorial dlt pair, and suppose $f: Y \dashrightarrow X$ is a flip corresponding to a $(K_Y + \Gamma)$-negative extremal ray, with flipping contractions $h: Y \rightarrow V$ and $h^+: X \rightarrow V$. If there exists a component $E_i$ of $E = \Gamma^{=1}$ containing the flipping locus $\mathrm{Ex}(h)$ and a component $E_j^+$ of $E^+ = \Gamma_X^{=1}$ containing the flipped locus $\mathrm{Ex}(h^+)$, then $h$ and $h^+$ are $G$-compatible.
\end{lemma}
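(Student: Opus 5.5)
The argument mirrors Lemma \ref{lem div cont}, applied separately to $h$ and to $h^+$. By symmetry it suffices to treat $h: Y \rightarrow V$; the flipped side $h^+ : X \rightarrow V$ is handled identically, with the pair $(X, \Gamma_X)$ and the component $E_j^+$ in place of $(Y,\Gamma)$ and $E_i$. The one difference is that $h^+$ is $(K_X+\Gamma_X)$-positive rather than negative, which matters only in the klt-type step below.

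Let $T = h(\mathrm{Ex}(h)) \subset V$. We form the restriction $g = h|_{E_i} : E_i \rightarrow h(E_i) =: T_i$, and note that $\mathrm{Ex}(h) \subset E_i$, so $T \subset T_i$. The plan is to show that $g$ is $G$-compatible and then deduce the statement for $h$ from the abstract blow-up square
\[
\begin{tikzcd}
h^{-1}(Z\cap T_i) \arrow[r] \arrow[d] & h^{-1}(Z) \arrow[d] \\
Z \cap T_i \arrow[r] & Z.
\end{tikzcd}
\]
This square is an abstract blow-up square because $h$ is an isomorphism outside $\mathrm{Ex}(h) \subset h^{-1}(T_i)$. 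Here $h^{-1}(Z \cap T_i)$ and $g^{-1}(Z\cap T_i) = E_i \cap h^{-1}(Z \cap T_i)$ may differ. However, they differ only away from $\mathrm{Ex}(h)$, where $h$ is an isomorphism. So I will first apply the blow-up square for the closed subset $E_i \cap h^{-1}(Z\cap T_i) \subset h^{-1}(Z \cap T_i)$ over $Z \cap T_i$; this square is again abstract blow-up since $h$ is an isomorphism off $\mathrm{Ex}(h)$. This reduces everything to the $G$-compatibility of $g$, and the pushout argument of Lemma \ref{lem div cont} then concludes.

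For $g$ to be $G$-compatible we want to invoke the inductive hypothesis, using $\dim E_i < n$. This needs three things: $g$ is dominant projective with rationally connected general fibre, and $E_i$ and $T_i$ are connected of klt type. The divisor $E_i$ is normal and of klt type, being an lc centre of a $\mathbb{Q}$-factorial dlt pair (perturb the boundary). Connectedness of fibres and normality of $T_i$ follow from Fujino's vanishing \cite[Theorem $8.1$]{fujino log mmp}, which gives $g_*\mathscr{O}_{E_i} = \mathscr{O}_{T_i}$. The general fibre of $g$ is rationally connected: over points of $T_i \setminus T$ the map $g$ is an isomorphism, so if $T_i \neq T$ it is birational; if $T_i = T$, then $-(K_{E_i}+\Gamma_{E_i})$ is $g$-ample and Theorem \ref{thm hm}(2) gives rationally chain connected fibres, hence rationally connected general fibres after normalization. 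Klt type of $T_i$ then follows from the canonical bundle formula \cite[Theorem $7.1$]{canonical bundle}, exactly as in Lemma \ref{lem div cont}.

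I expect the main obstacle to be the flipped side. There $-(K_X+\Gamma_X)$ is $h^+$-anti-ample, so the perturbation used for $h$ is not directly available. To get round this, I would show $T^+_j := h^+(E_j^+)$ is of klt type by a different route: note that $T_i$ and $T_j^+$ are both normal subvarieties of $V$ sharing the image $T$ of the exceptional loci, and that outside $T$ the variety $V$ is isomorphic to $Y$ and to $X$. First I would try to choose $\Gamma$ so that $V$ itself is of klt type (it is, being the base of a $(K_Y+\Gamma)$-negative contraction of a klt perturbation). Then I would realize $T_j^+$ as the image of a klt-type variety with rationally connected fibres, via $E_j^+ \to T_j^+$, using that $h^+$ is small with $K_X + \Gamma_X$ ample over $V$. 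A further complication is that the fibres of $h^+|_{E_j^+}$ need not be Fano, so proving their rational connectedness is where extra care, likely via Theorem \ref{thm hm}(1) applied to $(V,\Delta_V)$ near $T$, will be needed.
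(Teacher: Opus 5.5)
\textbf{Verdict: there is a genuine gap on the flipped side.} Your argument for $h$ is correct and is essentially the paper's. The paper uses the square over $W = Z\cap h(\mathrm{Ex}(h))$ rather than $Z\cap T_i$, which makes no difference.

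You have, however, not used that a flipping contraction is small. Since $\mathrm{Ex}(h)$ has codimension $\geq 2$, the divisor $E_i$ is never contained in it. Hence $T_i\neq T$ always, and $g=h|_{E_i}$ is birational, so your case $T_i = T$ is vacuous. Also, $\mathrm{Ex}(h)=h^{-1}(T)\subset E_i$ and $h$ is injective off $\mathrm{Ex}(h)$. So $h^{-1}(T_i)=E_i$ set-theoretically, giving $h^{-1}(Z\cap T_i)=g^{-1}(Z\cap T_i)$, and your intermediate reduction is unnecessary.

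On the flipped side you leave only an unfinished plan, and the route you sketch would not work:
\begin{itemize}
\item Knowing that $V$ is of klt type says nothing about its prime divisors.
\item Being the image of a klt-type variety under a morphism with rationally connected general fibres does not give klt type. Every resolution of a non-klt-type variety is such a morphism.
\item The mechanism you used for $T_i$ needs a relatively trivial klt pair. You get one from relative anti-ampleness by adding a general member, but $K_X+\Gamma_X$ is $h^+$-ample, which is the wrong sign.
\end{itemize}

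The missing observation is that the flip is an isomorphism in codimension one. So $E_j^+$ is the strict transform of a component $E_j$ of $\Gamma^{=1}$ on $Y$, and $T_j^+=h^+(E_j^+)=h(E_j)$ is the same prime divisor of $V$. This is the content of the paper's remark that $h(E_i)$ and $h^+(E_j^+)$ are both components of the pushed-forward boundary on $V$. Normality and klt type of $T_j^+$ are therefore proved on the $Y$-side:
\begin{itemize}
\item Run your argument for $T_i$ on $h|_{E_j}\colon E_j\to h(E_j)$, where $-(K_{E_j}+\Gamma_{E_j})$ is relatively ample.
\item This does not need $\mathrm{Ex}(h)\subset E_j$.
\item Since $h|_{E_j}$ is birational, the relatively trivial klt pair simply descends.
\end{itemize}

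Your worry about non-Fano fibres of $h^+|_{E_j^+}$ also disappears. Because $h^+$ is small, $h^+|_{E_j^+}\colon E_j^+\to T_j^+$ is birational, and its general fibre is a point. The source $E_j^+$ is of klt type, being a component of the $\mathbb{Q}$-factorial dlt pair $(X,\Gamma_X)$. So the inductive hypothesis applies directly, and the pushout square over $Z\cap h^+(\mathrm{Ex}(h^+))\subset T_j^+$ finishes exactly as for $h$.
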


\begin{proof}
Consider the flipping diagram
\[
\begin{tikzcd}[column sep=4em, row sep=3em]
E_i \subset Y \arrow[dr, "h"'] \arrow[rr, dashed,  "f"] & & X \supset E_j^+ \arrow[dl, "h^{+}"] \\
& V &
\end{tikzcd}
\]
The pair $(V, \Gamma_V)$ is dlt (but not $\mathbb{Q}$-factorial). Hence, the components $h(E_i)$ and $h^+(E_j^+)$ of $\Gamma_W^{=1}$ are of klt type. Our inductive hypothesis implies that the restrictions $h|_{E_i}: E_i \rightarrow h(E_i)$ and $h^+|_{E_j^+}: E_j^+ \rightarrow h^+(E_j^+)$ are $G$-compatible.

Let $Z \subset V$ be a closed subvariety and set $W = Z \cap h(\mathrm{Ex}(h)) = Z \cap h(\mathrm{Ex}(h^+))$. Thus, $W$ is contained in the images $h(E_i)$ and $h^+(E_j^+)$. Consider the two pushout squares
\begin{equation}\label{eqn flip}
\begin{tikzcd}
G \big( h^{-1}(W) \big) \arrow[r] \arrow[d] & G \big(h^{-1}(Z) \big) \arrow[d] \\
G \big( W \big) \arrow[r] & G \big( Z \big),
\end{tikzcd}
\qquad
\begin{tikzcd}
G \big( (h^{+})^{-1} (W) \big) \arrow[r] \arrow[d] & G \big( (h^{+})^{-1} (Z) \big) \arrow[d] \\
G \big( W \big) \arrow[r] & G \big( Z \big).
\end{tikzcd}
\end{equation}
Since $h|_{E_i}$ and $h^+|_{E_j^+}$ are $G$-compatible, there are equivalences $G(h^{-1}(W)) \simeq G(W)$ and $G( h^{+})^{-1} (W)) \simeq G(W)$. The pushout squares \eqref{eqn flip} therefore show $G(h^{-1}(Z)) \simeq G(Z)$ and $G ( (h^{+})^{-1} (Z)) \simeq G(Z)$.
\end{proof}

We are now prepared to prove Theorem \ref{intro-theorem main} in the case where $n = \dim X$. By taking a resolution of $Y$, we immediately reduce to the case where $Y$ is smooth. Therefore, it suffices to prove the following.

\begin{proposition}\label{prop main A}
Assume Theorem \ref{intro-theorem main} in dimension $< n$. If $f: Y \rightarrow X$ is a resolution of a variety $X$ of klt type, then $f$ is $G$-compatible.
\end{proposition}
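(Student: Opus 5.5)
We need to show that $G(f^{-1}Z)\to G(Z)$ is an equivalence for every closed $Z\subset X$. By Lemma \ref{lem reduce irred}, it suffices to take $Z$ irreducible. By Lemma \ref{lemma sm blow-up compatibility}, the statement does not depend on the chosen resolution, so we may replace $f$ by any convenient resolution. The plan is to build a resolution that factors through a sequence of MMP steps, each of which is $G$-compatible by Lemmas \ref{lem div cont} and \ref{lem flip}, and then compose.

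\textbf{Step 1: reduce to $\mathbb{Q}$-factorial.} Fix a klt boundary $\Delta$. First treat $X$ $\mathbb{Q}$-factorial. Take a log resolution $f\colon Y\to X$ of $(X,\Delta)$ with $f^{-1}(Z)$ divisorial and snc. Set
\[
\Gamma=f^{-1}_*\Delta+\sum E_i+\sum(1-\epsilon)F_j,
\]
where the $E_i$ are the exceptional divisors lying over $Z$ with coefficient one, and the $F_j$ are the remaining exceptional divisors with coefficient slightly less than one. Then $(Y,\Gamma)$ is $\mathbb{Q}$-factorial dlt. Write $K_Y+\Gamma\equiv_X\sum a_iE_i+\sum b_jF_j$; all coefficients are positive because the discrepancies are $>-1$, so this divisor is effective and exceptional.

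\textbf{Step 2: run the MMP.} Run a $(K_Y+\Gamma)$-MMP over $X$. A dlt version of Theorem \ref{bchm pseudo-eff}, obtained by perturbing to klt, gives termination. By the negativity lemma it contracts every exceptional divisor and ends at $X$ itself, since $X$ is $\mathbb{Q}$-factorial. Each step is $(K+\Gamma)$-negative, so every contracted curve has negative intersection with some component of the effective exceptional divisor. Hence every flipping locus, and every contracted divisor, lies in the support of $\sum a_iE_i+\sum b_jF_j$.

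\textbf{Step 3: arrange the hypotheses of Lemmas \ref{lem div cont} and \ref{lem flip}.} These lemmas need the relevant loci to lie in $\Gamma^{=1}$, not merely in the $F_j$. I would handle this in one of two ways:
\begin{itemize}
\item Put every exceptional divisor at coefficient one, replacing $\Gamma$ by $f^{-1}_*\Delta+\mathrm{Ex}(f)$. The pair stays dlt, and $K_Y+\Gamma\equiv_X\sum(1+a(E,X,\Delta))E$ is still effective and exceptional with full support.
\item Alternatively, first run an MMP contracting the $F_j$. The loci these contractions affect avoid $f^{-1}(Z)$ generically, so Lemma \ref{lem div cont} still applies once the $F_j$ are included in $\Gamma^{=1}$.
\end{itemize}
With the first choice, each divisorial contraction contracts a component of $\Gamma^{=1}$. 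For each flip, the negativity argument supplies a component $E_i$ containing $\mathrm{Ex}(h)$ with $E_i\cdot R<0$. The flipped locus is $(K+\Gamma)$-positive, so some exceptional component $E_j^+$ is positive on it and must contain it, since $\mathrm{Ex}(h^+)$ has codimension at least two and meets the effective divisor.

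\textbf{Step 4: compose.} For a flip we get $G(h^{-1}W)\simeq G(W)\simeq G((h^+)^{-1}W)$ over $V$. Composing equivalences along $Y\dashrightarrow Y_1\dashrightarrow\cdots\to X$ then gives the result. Preimages of $Z$ are transported compatibly through each flip because both sides map to $V$: for a flip, $G$ of the preimage of $Z'\subset Y_k$ is compared with $G$ of its image in $V$, and then with $G$ of the preimage in $Y_{k+1}$. One must check that preimages of the fixed subvariety $Z\subset X$ are the relevant subsets at each stage, which holds since everything lies over $X$.

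\textbf{Step 5: the non-$\mathbb{Q}$-factorial case.} Use Lemma \ref{lem plt blow-up}. Choose the $\mathbb{Q}$-factorial dlt $(Y',E)\to X$ with $f^{-1}(Z\cap U)=E_0\cap f^{-1}(U)$. Over $U$, the map $E_0\to Z$ is a lower-dimensional morphism between klt-type varieties with rationally connected general fiber, which is Theorem \ref{thm hm} together with adjunction and the canonical bundle formula as in Lemma \ref{lem div cont}. So induction gives compatibility over $U$. The complement $Z\setminus U$ has smaller dimension, and the pushout squares plus induction on $\dim Z$ let us glue.

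\textbf{Expected obstacles.} The main difficulty I expect is Step 5: controlling $G(f^{-1}Z)$ away from $U$, and proving that $Y'$ is $G$-compatible via the $\mathbb{Q}$-factorial MMP above over the non-$\mathbb{Q}$-factorial $X$. Step 3, verifying the containment of flipping and flipped loci in $\Gamma^{=1}$, is the other delicate point.
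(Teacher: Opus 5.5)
\textbf{Step 5 has a genuine gap.} Before that, one justification in your $\mathbb{Q}$-factorial case is wrong.

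\textbf{The $\mathbb{Q}$-factorial case.} Take the first option in Step~3: every exceptional divisor at coefficient one, perturb to klt, use the negativity lemma to force the MMP to end at $X$, then apply Lemmas~\ref{lem div cont} and~\ref{lem flip} step by step and compose through $V$. This is the paper's argument. Your treatment of the flipping side, via a component with $E_i\cdot R<0$, is correct.

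The flipped side is not justified correctly. From $(K+\Gamma)\cdot R^+>0$ you only get an exceptional component that is positive on the flipped curves. A divisor positive on a curve merely meets it; it need not contain it. This is exactly where $\mathbb{Q}$-factoriality of $X$ must enter, and your argument never uses it. A correct version:
\begin{itemize}
\item Take $H$ ample over $X$ on $Y_{i+1}$. Since $X$ is $\mathbb{Q}$-factorial, $B=f_{i+1}^*f_{i+1*}H-H$ is defined.
\item By the negativity lemma, $B$ is effective and exceptional, and $B\cdot R^+<0$.
\item Hence some exceptional component $E_j^+$ has $E_j^+\cdot R^+<0$, so it contains $\mathrm{Ex}(h^+)$.
\end{itemize}
The paper phrases this as $\mathrm{Ex}(h^+)\subset\mathrm{Ex}(f_{i+1})$, which is divisorial because $X$ is $\mathbb{Q}$-factorial.

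\textbf{The non-$\mathbb{Q}$-factorial case.} The $G$-compatibility of the dlt model $Y'$ is not actually the obstacle you feared. $Y'$ is itself $\mathbb{Q}$-factorial of klt type, so the first case, applied with $Y'$ as the base, shows its resolutions are $G$-compatible. It therefore suffices to prove $G(f^{-1}Z)\simeq G(Z)$ for $f\colon Y'\to X$; no MMP over $X$ is needed.

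The real problem is the gluing.
\begin{itemize}
\item You cannot apply the inductive hypothesis to $E_0\to Z$: $Z$ is an arbitrary subvariety and need not be of klt type.
\item ``Compatibility over $U$'' is not something the pushout squares can use. With $W=Z\setminus U$, the square for $f^{-1}(Z)=E_0\cup f^{-1}(W)$ involves $G(E_0)$ and $G(E_0\cap f^{-1}(W))$, and the latter lies entirely over $W$.
\item Induction on $\dim Z$ only gives $G(f^{-1}(W))\simeq G(W)$.
\item $E_0\to Z$ is in general not an isomorphism over $Z\cap U$, so it gives no abstract blow-up square for $Z$.
\end{itemize}

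The missing idea is to pass to smooth models.
\begin{itemize}
\item Shrink $U$ so that $Z\cap U$ is smooth. Take a resolution $a\colon\widetilde Z\to Z$; then $a^{-1}(W)\to\widetilde Z$ over $W\to Z$ is an abstract blow-up square.
\item Take a resolution $c\colon\widetilde E_0\to E_0$ with a morphism $b\colon\widetilde E_0\to\widetilde Z$ satisfying $a\circ b=f|_{E_0}\circ c$.
\item $c$ is $G$-compatible by induction, since $E_0$ is of klt type and $\dim E_0<n$.
\item $b$ is a morphism of smooth varieties of dimension $<n$ whose general fiber is rationally connected by Theorem~\ref{thm hm}(1). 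So $b$ is $G$-compatible over all of $\widetilde Z$, in particular for $a^{-1}(W)$.
\end{itemize}
This gives $G(\widetilde Z)\simeq G(E_0)$ and $G(a^{-1}W)\simeq G(b^{-1}a^{-1}W)=G(c^{-1}(E_0\cap f^{-1}W))\simeq G(E_0\cap f^{-1}W)$. Comparing the two pushout squares then finishes the proof.
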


\begin{proof}
Suppose first that $X$ is $\mathbb{Q}$-factorial. Choose a boundary $\mathbb{Q}$-divisor $\Delta$ on $X$ so that the pair $(X, \Delta)$ is klt. Blowing up $Y$ and applying Lemma \ref{lemma sm blow-up compatibility}, we can arrange that $f$ is a log resolution with exceptional divisor $E$. Consider the boundary $\mathbb{Q}$-divisor $\Gamma = E + f_*^{-1} \Delta$ on $Y$. We claim that we can run an MMP on the dlt pair $( Y, \Gamma )$ over $X$, just as in the klt case appearing in Theorem \ref{bchm pseudo-eff}. 

For this, consider the auxiliary $\mathbb{Q}$-divisor
\begin{equation}\label{eqn auxiliary}
\begin{split}
A &= E + f_*^{-1}\Delta - \Delta_Y \\
  &\sim_{\mathbb{Q}} K_Y + \Gamma - f^*(K_X+\Delta).
\end{split}
\end{equation}
Here, we are using the log pull-back $\Delta_Y$ of $\Delta$ defined by \eqref{eqn delta_Y}. By definition, since $(X, \Delta)$ has klt singularities, the coefficients of $\Delta_Y$ are all $< 1$. Hence, $A$ is an effective divisor, with support consisting precisely of the exceptional components $E_i$ of $E$. 

Set $\Gamma_\epsilon = \Gamma - \epsilon A$ for $0 < \epsilon \ll 1$. It is effective and the pair $(Y, \Gamma_\epsilon)$ is klt. Moreover, by the second line of \eqref{eqn auxiliary},
\begin{align*}
    K_Y + \Gamma_\epsilon = K_Y + \Gamma - \epsilon A \sim_{\mathbb{Q}, X} (1- \epsilon) \big(K_Y + \Gamma \big).
\end{align*}
Thus $(K_Y + \Gamma)$ and $(K_Y + \Gamma_\epsilon)$ determine the same negative extremal rays over $X$. Applying Theorem \ref{bchm pseudo-eff}, we obtain an MMP for $(Y, \Gamma_\epsilon)$ over $X$, and this is the same as an MMP on $(Y, \Gamma)$ over $X$.

The MMP outputs a factorization of $f$ as a composition 
\[
Y=Y_0 \overset{g_0}{\dashrightarrow} Y_1 \overset{g_1}{\dashrightarrow} \cdots \dashrightarrow
Y_r {\rightarrow} X .
\]
The pairs $(Y_i, \Gamma_i)$ are dlt, where $\Gamma_i$ is the pushforward of $\Gamma$ to $Y_i$. Denote by $f_i: Y_i \rightarrow X$ the morphism to $X$. 

We claim that $f_r: Y_r \rightarrow X$ is an isomorphism. Indeed, the second line of \eqref{eqn auxiliary} shows the pushforward $A_r$ of $A$ to $Y_r$ is $\mathbb{Q}$-linearly equivalent to the divisor $K_{Y_r} + \Gamma_r$ over $X$. Since $Y_r$ is a log terminal model, this is relatively nef over $X$. But then $A_r$ is effective, exceptional, and relatively nef, so it must be trivial by the negativity lemma \cite[Lemma $3.39$]{km}. In other words, $g_r$ contracts all components of $E$. Since $X$ is $\mathbb{Q}$-factorial, the small morphism $f_r: Y_r \rightarrow X$ must be an isomorphism.

It therefore suffices to analyze each $g_i$ separately. If $g_i: Y_i \rightarrow Y_{i+1}$ is a divisorial contraction, it contracts a component of $\Gamma_i$. By Lemma \ref{lem div cont}, $g_i$ is $G$-compatible. If $g_i = (h_i^+)^{-1} \circ h_i$ is a flip, then $\mathrm{Ex}(h_i) \subset \mathrm{Ex}(f_i)$. Indeed, since $X$ is $\mathbb{Q}$-factorial, $\mathrm{Ex}(f_i)$ is divisorial and so it is equal to the support of $\Gamma_i$. Similarly, $\mathrm{Ex}(h_i^+) \subset \mathrm{Ex}(f_{i+1}) = \mathrm{Supp}(\Gamma_{i+1})$. Lemma \ref{lem flip} implies that $h_i$ and $h_i^+$ are $G$-compatible. Putting this all together, we conclude that $f$ is $G$-compatible.

Let us now prove that an arbitrary klt type variety $X$ is $G$-compatible. Let $Z \subset X$ be a closed subvariety, which we may assume is irreducible by Lemma \ref{lem reduce irred}. By Lemma \ref{lem plt blow-up}, there exists a $\mathbb{Q}$-factorial dlt pair $(Y, E)$, a projective birational morphism $f: Y \rightarrow X$, an open neighborhood $U \subset X$ containing the generic point $\eta_Z$, and an irreducible component $E_0$ of $E$ such that $f^{-1}(Z \cap U) = E_0 \cap U$. It suffices to prove that $G(f^{-1}(Z)) \rightarrow G(Z)$ is an equivalence. Indeed, we have already seen that $Y$ is $G$-compatible, so $G(f^{-1}(Z)) \simeq G( ( f \circ r_Y)^{-1}(Z))$ for a resolution $r_Y: \widetilde{Y} \rightarrow Y$. 

Let $W = Z \cap (X \setminus U)$, and take a resolution $a: \widetilde{Z} \rightarrow Z$ of $Z$. To show $G(f^{-1}(Z)) \simeq G(Z)$, our goal is to identify the two pushout squares

\begin{equation*}
\begin{tikzcd}
G \big( f^{-1}(W) \cap E_0 \big) \arrow[r] \arrow[d] & G \big( E_0 \big) \arrow[d] \\
G \big( f^{-1}(W) \big) \arrow[r] & G \big( f^{-1}(Z) \big),
\end{tikzcd}
\qquad
\begin{tikzcd}
G \big( a^{-1}(W) \big) \arrow[r] \arrow[d] & G \big( \widetilde{Z} \big) \arrow[d] \\
G \big( W \big) \arrow[r] & G \big( Z \big).
\end{tikzcd}
\end{equation*}

By induction on $\dim Z$, we know $G(W) \simeq G( (f \circ r_Y)^{-1}(W))$, and the latter is equivalent to $G(f^{-1}(W))$ because $Y$ is $\mathbb{Q}$-factorial of klt type.

Choose a resolution $\widetilde{E}_0$ of $E_0$ sitting in a commutative diagram
\begin{equation*}
\begin{tikzcd}
\widetilde{E}_0 \arrow[r, "b"] \arrow[d, "c"] & \widetilde{Z} \arrow[d, "a"] \\
E_0 \arrow[r, "f|_{E_0}"] & Z.
\end{tikzcd}    
\end{equation*}
Since $E_0$ is klt type of dimension $< n$, our inductive hypothesis implies that $c$ is $G$-compatible. Moreover, every fiber of $f$ and hence also of $f|_{E_0}$ is rationally chain connected by Theorem \ref{thm hm}$(1)$. The general fiber of $b$ is a resolution of the general fiber of $f|_{E_0}$, the latter of which is of klt type. By Theorem \ref{thm hm}$(1)$ again, the general fiber of $b$ is rationally connected and so $b$ is $G$-compatible by our inductive hypothesis.

Using the $G$-compatibility of $b$ and $c$, we obtain equivalences $G(\widetilde{Z}) \simeq G(\widetilde{E}_0) \simeq G(E_0)$ and
\begin{align*}
 G \big( a^{-1}(W) \big) \simeq G \big( b^{-1} a^{-1}(W) \big) \simeq G \big( c^{-1} f|_{E_0}^{-1} (W) \big) \simeq G \big( f|_{E_0}^{-1}(W) \big)
\end{align*}
But $f|_{E_0}^{-1}(W) = f^{-1}(W) \cap E_0$. We conclude that the diagrams have equivalent pushouts $G(f^{-1}(Z)) \simeq G(Z)$.
\end{proof}

We will now use Proposition \ref{prop main A} to help prove Theorem \ref{intro-theorem main} in the case $\dim X < n = \dim Y$. First, we reduce to the case where $X$ and $Y$ are smooth. For this, choose resolutions $r_X: \widetilde{X} \rightarrow X$ and $r_Y: \widetilde{Y} \rightarrow Y$ fitting in a commutative diagram
\[
\begin{tikzcd}
\widetilde{Y} \arrow[r, "\widetilde{f}"] \arrow[d,"r_Y"] & \widetilde{X} \arrow[d, "r_X"] \\
Y \arrow[r, "f"] & X.
\end{tikzcd}
\]
By Proposition \ref{prop main A}, both $r_Y$ and $r_X$ are $G$-compatible. Moreover, the general fiber of $\widetilde{f}$ is rationally connected. Indeed, it is a resolution of a general fiber of $f$, which is rationally chain connected of klt type, so we may apply Theorem \ref{thm hm}$(1)$. A diagram chase therefore shows that it suffices to prove $\widetilde{f}$ is $G$-compatible.

Secondly, we must understand the behavior of RC-functors with respect to Fano contractions.

\begin{lemma}\label{lem fano fib}
Assume Theorem \ref{intro-theorem main} in dimension $< n$. Let $(Y, \Gamma)$ be a $\mathbb{Q}$-factorial dlt pair and let $h: Y \rightarrow W$ be a Fano contraction corresponding to a $(K_Y + \Gamma)$-negative extremal ray. If $E = \Gamma^{=1}$ is $h$-vertical and $D = h(E)$, then $G(E) \simeq G(D)$.
\end{lemma}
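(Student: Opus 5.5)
Since $E$ is $h$-vertical and every component $E_i$ of $E$ is an irreducible divisor, $E = h^{-1}(D)$ set-theoretically. Indeed, the relative Picard number of $h$ is one, so a vertical $\mathbb{Q}$-Cartier divisor is $h$-numerically trivial and hence, up to multiple, the pullback of a divisor on $W$; therefore $E_i = h^{-1}(h(E_i))$. The plan is to show that the restriction $h|_{E}: E \to D$ is $G$-compatible, or at least that $G(E) \to G(D)$ is an equivalence. I would do this stratum by stratum, using Lemma \ref{lem reduce irred}-style Mayer--Vietoris inductions together with the inductive hypothesis that Theorem \ref{intro-theorem main} holds in dimension $< n$.

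\textbf{Step 1: reduction to the components.} Write $E = E_1 \cup \dots \cup E_k$ and $D_i = h(E_i)$. Then $E_i = h^{-1}(D_i)$, and more generally each intersection $E_{i_1}\cap\dots\cap E_{i_s}$ equals $h^{-1}(D_{i_1}\cap \dots\cap D_{i_s})$. Since $G$ sends abstract blow-up squares, in particular closed covers $Z = Z_1\cup Z_2$, to pushouts, the same inductive argument as in Lemma \ref{lem reduce irred} reduces $G(E)\simeq G(D)$ to the single claim that for each component $E_i$ the map $h|_{E_i}: E_i \to D_i$ is $G$-compatible. Granting this, for every $i_1,\dots,i_s$ we get $G(h^{-1}(D_{i_1}\cap\dots\cap D_{i_s})) \simeq G(D_{i_1}\cap\dots\cap D_{i_s})$, which we apply to subvarieties of $D_{i_1}$. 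The two iterated pushout cubes are then identified termwise.

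\textbf{Step 2: each $h|_{E_i}$ satisfies the inductive hypothesis.} $E_i$ is normal and of klt type, since $(E_i,\Gamma_{E_i})$ is dlt by adjunction and can be perturbed to klt. We have $\dim E_i < n$. I need three further facts.
\begin{enumerate}
\item $h|_{E_i}$ is dominant onto $D_i$, and $D_i$ is normal of klt type. By Fujino's vanishing theorem for lc centers \cite[Theorem $8.1$]{fujino log mmp}, $(h|_{E_i})_*\mathscr{O}_{E_i}=\mathscr{O}_{D_i}$. The canonical bundle formula \cite[Theorem $7.1$]{canonical bundle} then applies exactly as in Lemma \ref{lem div cont}, using that $-(K_{E_i}+\Gamma_{E_i})$ is $h$-ample, and shows that $D_i$ is of klt type.
\item The fibers of $h|_{E_i}$ are connected.
\item The general fiber of $h|_{E_i}$ is rationally connected. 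The general fiber is a fiber of a klt pair $(E_i,\Gamma'_{E_i})$ with $-(K+\Gamma')$ relatively ample. By Theorem \ref{thm hm}$(2)$ it is rationally chain connected. It is also of klt type, being a general fiber of a klt variety with anti-ample log canonical divisor, that is, a log Fano variety. For such varieties rational chain connectedness implies rational connectedness, since a resolution is rationally connected by Theorem \ref{thm hm}$(1)$.
\end{enumerate}
The inductive hypothesis then gives that $h|_{E_i}$ is $G$-compatible.

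\textbf{Main obstacle.} The delicate points are the set-theoretic equality $E_i = h^{-1}(h(E_i))$ and the correct adjunction set-up on a dlt (not necessarily snc) pair. For the first, I would use that $E_i$ is $\mathbb{Q}$-Cartier by $\mathbb{Q}$-factoriality and $h$-numerically trivial, hence $\mathbb{Q}$-linearly a pullback $h^*B$. Its support is then a union of full fibers, and it is irreducible because $h$ has connected fibers. For intersections of several components, the equality $E_{i_1}\cap\dots\cap E_{i_s}=h^{-1}(D_{i_1}\cap\dots\cap D_{i_s})$ follows directly from the fact that each $E_{i_j}$ is a full preimage.
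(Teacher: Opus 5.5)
Your proof is correct, but it is organized differently from the paper's.

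The paper's route is as follows. It invokes Lemma~\ref{lemma qdlt fano} (that is, \cite[Proposition 40]{dFKX}) to obtain a qdlt pair $(W,\Gamma_W)$ with $D=\Gamma_W^{=1}$ and a bijection $E_J\leftrightarrow D_J$ between strata. It then asserts that each $E_J\to D_J$ is a morphism of klt type varieties with rationally connected general fiber. It applies the inductive hypothesis to every stratum, only with $Z$ equal to the whole target. Finally it compares the two colimits over the cube of intersections.

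Your route differs at each of these steps. You prove $E_i=h^{-1}(h(E_i))$ directly from $\rho(Y/W)=1$. You verify that $D_i$ is of klt type, and that $E_i\to D_i$ has rationally connected general fiber, by rerunning the argument of Lemma~\ref{lem div cont} together with Theorem~\ref{thm hm}. You then apply the inductive hypothesis only to the divisorial maps $E_i\to D_i$, but in its full form, for all closed $Z\subset D_i$. This gives $G(E_J)\simeq G(D_J)$ for every intersection via $E_J=(h|_{E_{i_1}})^{-1}(D_J)$.

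What each approach buys:
\begin{itemize}
\item Your route avoids the qdlt structure on $W$ altogether. It also never requires the lower strata $E_J$, $D_J$ to be connected varieties of klt type. Intersections of components may be disconnected, which the paper's stratum-by-stratum application of the main theorem would have to handle via \eqref{eqn coprod}.
\item The paper's route is shorter given \cite{dFKX}. The stratum bijection it records is also reused in the Fano step of the proof of Theorem~\ref{thm cartier collapse}.
\end{itemize}

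One step to tighten: $mE_i\sim h^*L$ alone does not control supports. You can fix this in either of two ways.
\begin{itemize}
\item Use $h_*\mathscr{O}_Y=\mathscr{O}_W$ to descend the section cutting out $mE_i$, so that $mE_i=h^*B$ with $B$ effective.
\item Suppose a fiber $F$ met $E_i$ without being contained in it. All components of fibers of a Fano contraction are positive-dimensional, so some curve $C\subset F$ passes through a point of $E_i$ and is not contained in $E_i$. Then $E_i\cdot C>0$, contradicting $h$-numerical triviality.
\end{itemize}
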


\begin{proof}
Write $E = \bigcup_{i \in I} E_i$ and $E_J = \bigcap_{j \in J} E_j$ for each nonempty subset $J \subset I$. As in the case where $E$ is an snc divisor in Example \ref{exmp snc hyperres}, we have $G(E) \simeq \operatorname*{colim} G(E_J)$.

By Lemma \ref{lemma qdlt fano}, there is a $\mathbb{Q}$-divisor $\Gamma_W$ on $W$ such that $(W, \Gamma_W)$ is qdlt and $D = \Gamma_W^{=1}$. Moreover, the strata $D_J$ of $D$ correspond bijectively with the strata $E_J$ of $E$. Under this correspondence, the restriction maps $E_J \rightarrow D_J$ are between varieties of klt with rationally connected general fibers. By the inductive hypothesis, $G(E_J) \simeq G(D_J)$ for each nonempty subset $J \subset I$. 

Applying the same colimit description to $D$, we have $G(D) \simeq \operatorname*{colim} G(D_J)$. The equivalences on the strata are compatible with the transition maps and therefore induce an equivalence $G(E) \simeq G(D)$.
\end{proof}

We are now prepared to prove the following proposition. Together with Proposition \ref{prop main A}, this will complete the proof of Theorem~\ref{intro-theorem main}.

\begin{proposition}\label{prop main B}
Assume Theorem \ref{intro-theorem main} in dimension $< n$. If $f: Y \rightarrow X$ is a dominant projective morphism between smooth connected varieties whose general fiber is rationally connected and positive-dimensional, then $f$ is $G$-compatible.
\end{proposition}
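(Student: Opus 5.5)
We will reduce to irreducible $Z$ by Lemma \ref{lem reduce irred}, and then run an MMP over $X$ that makes the preimage of $Z$ a boundary divisor. Replacing $Y$ by a log resolution (Lemma \ref{lemma sm blow-up compatibility} permits this), we may assume $f^{-1}(Z)_{\mathrm{red}} = E$ is an snc divisor. If $Z = X$ there is nothing to prove beyond $G(Y) \simeq G(X)$, which is RC-invariance. So assume $Z \subsetneq X$. After blowing up $X$ along $Z$ and resolving, Lemma \ref{lemma sm blow-up compatibility} on the base side lets us further assume $Z$ is a divisor with snc preimage, or at least that $f^{-1}(Z)$ is a divisor. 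Set $\Gamma = E$ and consider the $\mathbb{Q}$-factorial dlt pair $(Y,\Gamma)$. Since the general fiber is rationally connected, $K_Y$, and hence $K_Y+\Gamma$ (as $\Gamma$ is vertical), is not $f$-pseudo-effective. After the usual perturbation $\Gamma_\epsilon = \Gamma - \epsilon A$, with $A$ supported on $E$ and $A \sim_{\mathbb{Q},X} K_Y+\Gamma - (\text{pullback})$, Theorem \ref{bchm not pseudo-eff} runs a $(K_Y+\Gamma)$-MMP over $X$ ending in a Fano contraction $h: Y_r \to W \to X$.

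Next we track $G$ of the preimage of $Z$ through each step. For each $i$, let $E_i$ denote the pushforward boundary, which is still the reduced preimage of $Z$ as long as it stays vertical. Divisorial contractions of components of $E_i$ are handled by Lemma \ref{lem div cont}. A divisorial contraction of a divisor not in $E_i$ is an isomorphism near the preimage of $Z$ only if its exceptional locus is disjoint from it; otherwise we must arrange that the relevant locus lies inside a component of the boundary. For flips we need Lemma \ref{lem flip}, whose hypothesis is that the flipping and flipped loci lie in components of the boundary.

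To guarantee this, we choose the MMP as a $(K_Y+\Gamma + t\, f^*H)$-style program with scaling, or more simply run it as an MMP over $X$ localized so that every extremal ray we contract is $(K_Y+\Gamma)$-negative but $K_Y$-trivial off $E$. The hardest step is this choice. What we would try first is to use the fact that $K_Y + \Gamma \sim_{\mathbb{Q},X} -(\text{something effective off } E) + E$-type relations, arranged via $\Gamma = E + \text{(pullback of ample)}$, so that negativity forces every contracted curve meeting $f^{-1}(Z)$ to lie in $E_i$.

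Once the MMP terminates with $h: Y_r \to W$, Lemma \ref{lemma qdlt fano} makes $(W,\Gamma_W)$ a $\mathbb{Q}$-factorial qdlt pair, and $W \to X$ has rationally connected general fiber. Lemma \ref{lem fano fib} gives $G(E_r) \simeq G(h(E_r))$, and $h(E_r)$ is the preimage of $Z$ in $W$. Since $\dim W < n$ and $W$ is of klt type, the induction hypothesis gives $G(\text{preimage in }W) \simeq G(Z)$. Composing the equivalences along the MMP yields $G(f^{-1}Z)\simeq G(Z)$. The main obstacle is the compatibility at flips and at divisorial contractions not contained in the boundary: ensuring that the loci touching the preimage of $Z$ stay inside the boundary components, as required by Lemmas \ref{lem div cont} and \ref{lem flip}.
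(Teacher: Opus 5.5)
Your outline matches the paper's proof at both ends: reduce to $Z$ a Cartier divisor with $E=f^{-1}(Z)_{\mathrm{red}}$ snc, run a $(K_Y+E)$-MMP over $X$ ending in a Fano contraction $Y_r\to W\to X$, then apply Lemma~\ref{lem fano fib} and the inductive hypothesis to $W\to X$. The gap is the middle step $G(E)\simeq G(E_r)$, which you leave open, and the tools you reach for cannot close it. Lemmas~\ref{lem div cont} and~\ref{lem flip} require the exceptional locus to lie in a boundary component. In an MMP over $X$ with positive-dimensional fibers nothing forces this: a horizontal divisor meeting $E$ may be contracted, or a flipping curve may meet $E$ without lying in it. Your suggested remedies do not help. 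Adding $f^*H$ with $H$ ample on $X$ is numerically trivial over $X$, so it changes no extremal ray. And ``$K_Y$-trivial off $E$'' is not a condition one can impose on the rays an MMP contracts. In the paper, those two lemmas are used only inside the proof of Proposition~\ref{prop main A}, where $\mathbb{Q}$-factoriality of the base forces every exceptional locus into the boundary.

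The missing idea is that no condition on the exceptional loci is needed. Proposition~\ref{prop main A} has already been established in dimension $n$ under the same inductive hypothesis, and it does not require $\mathbb{Q}$-factoriality of the target. Composing with a resolution, it shows that every projective birational morphism between $n$-dimensional varieties of klt type is $G$-compatible. Every divisorial contraction $g_i\colon Y_i\to Y_{i+1}$ and every flipping contraction $Y_i\xrightarrow{h_i}V_i\xleftarrow{h_i^+}Y_{i+1}$ is of this form; $V_i$ is of klt type, being the base of a contraction that is negative for a klt pair on $Y_i$. Since $Z$ is Cartier and all maps are over $X$, the preimage of $Z$ in each $Y_i$ is the support of the pullback of $Z$. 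That is a Cartier divisor, so the preimage is pure of codimension one and equals $E_i$. Hence $E_i=g_i^{-1}(E_{i+1})$ in the divisorial case. In the flip case, $E_i=h_i^{-1}(Z_{V_i})$ and $E_{i+1}=(h_i^+)^{-1}(Z_{V_i})$, where $Z_{V_i}$ is the preimage of $Z$ in $V_i$. Either way, $G$-compatibility gives $G(E_i)\simeq G(E_{i+1})$.

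This is also why Cartierness of $Z$ is needed, and your fallback ``$f^{-1}(Z)$ is a divisor'' would not suffice: otherwise a flipped locus could map into $Z$ without lying in $E_{i+1}$. Finally, your perturbation is misstated. Since $K_Y+\Gamma$ is not $f$-pseudo-effective, it is not $\mathbb{Q}$-linearly equivalent over $X$ to any effective $A$. The right choice is $A=f^*Z$, which is effective with support $E$ and $\sim_{\mathbb{Q},X}0$. Then $(Y,E-\epsilon f^*Z)$ is klt and has the same MMP over $X$ as $(Y,E)$.
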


\begin{proof}
Fix a closed subvariety $Z \subset X$. If $Z = X$, then $G(Y) \simeq G(X)$ by the definition of an RC-functor. Otherwise, by Lemma \ref{lem reduce irred}, we may assume that $Z \subsetneq X$ is a proper irreducible subvariety. After blowing up $X$ and $Y$ and applying Lemma \ref{lemma sm blow-up compatibility}, we reduce to the case where $Z$ is purely divisorial and $E = f^{-1}(Z)_{\mathrm{red}}$ is an snc divisor. 

Run an MMP on the pair $(Y, E)$ over $X$. Since $Z$ is Cartier, this is equivalent to running an MMP on the klt pair $(Y, E - \epsilon f^*(Z))$ for $0 < \epsilon \ll 1$. By Theorem \ref{bchm not pseudo-eff}, the latter MMP can be run and it terminates with a Fano contraction. Indeed, $K_Y+E $ is not $f$-pseudo-effective, since the general fiber of $f$ is rationally connected of positive dimension. 

We obtain a factorization of $f$ of the form
\[
Y=Y_0 \overset{g}{\dashrightarrow} 
Y_r \overset{h}{\rightarrow} W \overset{k}{\rightarrow} X .
\]
Here, $g$ is the composition of the flips and divisorial contractions. The pair $(Y_r, E_r)$ is dlt, where  $E_r = g_* E$. Moreover, since $Z$ is Cartier, the pre-image $(k \circ h)^{-1}(Z)$ is divisorial and so it is equal to $E_r$. By Proposition \ref{prop main A}, we have $G(E) \simeq G(E_r)$.

The morphism $h: Y_r \rightarrow W$ is a Fano contraction. Hence, Lemma \ref{lem fano fib} implies $G(E_r) \simeq G(h(E_r))$. Moreover, $W$ is of klt type of dimension $< n$. Since $k: W \rightarrow X$ has rationally connected general fibers, the inductive hypothesis gives $G(k^{-1}(Z)) \simeq G(Z)$. Because $k^{-1}(Z) = h(E_r)$, these equivalences combine to yield $G(Z) \simeq G(E)$, as required.
\end{proof}

\section{Applications}

\subsection{Rational connectedness of strata}\label{section collapse}

Theorem \ref{intro-theorem main} gives strong control over global invariants of the fibers of a rationally connected fibration. In many applications, however, one needs finer information about the individual components and strata of a fiber. This section develops such a refinement.

As a motivating example, consider the following question of Koll\'ar \cite[Question $16$]{ax}.

\begin{question}[{= Question \ref{intro-question}}]
Let $f: Y \rightarrow C$ be a dominant projective morphism from a smooth connected variety to a smooth curve whose general fiber is rationally connected. If $x \in C$ and $E = f^{-1}(x)_{\mathrm{red}}$ is an snc divisor, must $E$ have a rationally connected irreducible component?
\end{question}

In this setting, Theorem \ref{intro-theorem main} gives
\[
G(E)\simeq G(\mathrm{pt})
\]
for every RC-functor \(G\). In particular, the dual complex $\mathcal{D}(E)$ is contractible. Moreover, $E$ is rationally chain connected, for example by \cite[Theorem IV.\(3.5\)]{rational curves}. However, these methods do not by themselves distinguish between the contributions of individual strata. 

Brown and Foster answered Kollár's question affirmatively in \cite[Remark $5.7$]{brown foster}. We complete the picture by describing the rational connectedness behavior of all strata of $E$. The main technical result is as follows.

\begin{theorem}\label{thm cartier collapse}
Let $f: Y \rightarrow X$ be a dominant projective morphism whose general fiber is rationally connected. Assume that $(X, D)$ is plt with $D$ a $\mathbb{Q}$-Cartier divisor, and that $(Y, \Gamma)$ is $\mathbb{Q}$-factorial qdlt with $E = \Gamma^{=1}$ equal to $f^{-1}(D)_{\mathrm{red}}$. Then $E$ has an irreducible component $E_0$ such that the induced morphism $E_0 \rightarrow D$ has rationally connected general fibers. 

Moreover, the strata of $E$ other than $E_0$ admit an ordered partition into pairs
\[
    \big( Z_1, W_1 \big), \ldots , \big( Z_a, W_a \big)
\]
such that, for every $i$,
\begin{enumerate}
    \item $Z_i$ is a divisor in $W_i$ with $f(Z_i) = f(W_i)$;
    \item $Z_i$ and $W_i$ have birational MRC bases; and 
    \item every stratum of $E$ contained in $W_i$ is equal to $Z_j$ or $W_j$ for some $j \leq i$.
\end{enumerate}
\end{theorem}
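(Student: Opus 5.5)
The plan is to induct on $\dim Y$, running the same MMP used in Proposition \ref{prop main B}, but tracking individual strata and MRC bases rather than only $G$. Since $D$ is $\mathbb{Q}$-Cartier and $E = f^{-1}(D)_{\mathrm{red}}$, the divisor $K_Y + \Gamma$ is $\mathbb{Q}$-linearly equivalent over $X$ to $K_Y + \Gamma - \epsilon f^*D$, and the latter pair is klt for $0<\epsilon \ll 1$. Hence I run a $(K_Y+\Gamma)$-MMP over $X$ using Theorem \ref{bchm pseudo-eff} or Theorem \ref{bchm not pseudo-eff}. If $f$ has positive-dimensional general fiber, the MMP ends in a Fano contraction $h: Y_r \to W$. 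If $f$ is birational, it ends with a model over $X$. There I use the plt hypothesis on $(X,D)$: the only divisor over $X$ with discrepancy $-1$ that can survive on the log terminal model is the strict transform of $D$, and the negativity lemma forces the other exceptional components of $E$ to be contracted.

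Next I track strata through each step. Under a divisorial contraction of a component $E_i$ of $E$, the strata of $E$ are lost only inside $E_i$. The map $E_i \to g(E_i)$ is a $(K_{E_i}+\Gamma_{E_i})$-negative contraction of a qdlt pair of smaller dimension. Applying the induction hypothesis to it, with the adjoint boundary, pairs off the strata of $E$ contained in $E_i$ up to one distinguished stratum. That stratum is paired with a neighbour through the divisor $E_i$. A stratum $S$ and a divisor $S' \subset S$ with $f(S')=f(S)$ in which the fibers of $S' \to$ its image are rationally connected have birational MRC bases, since $S' \to f(S)$ and $S \to f(S)$ then share the same rationally connected structure. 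Flips are isomorphisms in codimension one and preserve strata except along the flipping locus. Over the flipping locus I would argue in the same way, using the strata containing $\mathrm{Ex}(h)$ and $\mathrm{Ex}(h^+)$. For the final Fano contraction, Lemma \ref{lemma qdlt fano} gives a qdlt pair $(W,\Gamma_W)$ with a bijection of strata. Each stratum $E_J \to D_J$ has rationally connected general fibers, so MRC bases are preserved. The plt pair $(X,D)$ pulls back to $W \to X$, so I can apply the induction hypothesis to $W \to X$, since $\dim W < \dim Y$, and then transport the ordered partition back.

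The ordering condition (3) is arranged by listing pairs in the order they appear during the MMP, with pairs from inner inductive applications listed first. This makes each collapse elementary relative to the strata already removed. The component $E_0$ is the unique survivor, the strict transform of the component over $D$ at the end, and $E_0 \to D$ has rationally connected general fibers. This follows from Theorem \ref{thm hm} together with the inductive output for $W \to X$, or from the birational case.

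I expect the main obstacle to be the flip step. Strata can appear or disappear along $\mathrm{Ex}(h)$ and $\mathrm{Ex}(h^+)$, so I must show that the strata on $Y$ and on $Y^+$ over $\mathrm{Ex}$ admit compatible collapses down to a common set on $V$. Condition (1) makes this delicate, because $f(Z_i)=f(W_i)$ must hold for the image in $X$. My first attempt would be to apply induction to both $E_i \to h(E_i)$ and $E_j^+ \to h^+(E_j^+)$, viewed over the lc center of $(V,\Gamma_V)$. I would then splice the two partitions, using the fact that the distinguished stratum on each side maps with rationally connected fibers onto the same lc center of $V$.
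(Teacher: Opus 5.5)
There is a genuine gap in the step that pairs off the strata contracted by a divisorial contraction or a flip. You propose to apply the induction hypothesis, i.e.\ the theorem itself, to $E_i \to g(E_i)$ with the adjoint boundary $\Gamma_{E_i}$. Its hypotheses fail there. The divisor $f^*D$ is effective with support exactly $E$, and it is numerically trivial on the extremal ray $R$ because $R$ is contracted over $X$. Since $(E_i \cdot R) < 0$, some component $E_0$ of $E$ must satisfy $(E_0 \cdot R) > 0$. Then $E_0 \cap E_i$ is a component of $\Gamma_{E_i}^{=1}$ that meets every fiber of $E_i \to g(E_i)$. Hence $\Gamma_{E_i}^{=1}$ is not the reduced preimage of any divisor on $g(E_i)$, let alone of a $\mathbb{Q}$-Cartier divisor forming a plt pair.

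This horizontal stratum is exactly the ``distinguished stratum'' you want to pair with $E_i$, and it must dominate $g(E_i)$ for condition (1) to hold. The theorem cannot produce it, because its distinguished component always lies over a divisor of the base. The same objection applies to your flip strategy via $E_i \to h(E_i)$, which you also leave unresolved. Separately, your criterion for (2) does not give birational MRC bases: having the same image, with $S'$ rationally connected over its image, is not enough. You need both $S$ and $S'$ to have rationally connected general fibers over a common base.

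The missing idea is Lemma~\ref{lem collapse}. At each step that contracts some lc center, \cite[Lemma~21]{dFKX} provides a component $E_0$ with $(E_0 \cdot R) > 0$. One then pairs each contracted lc center $W \not\subset E_0$ with $Z = W \cap E_0$, using the bijection from \cite[Theorem~19]{dFKX}. Since $E_0$ meets every fiber of the contraction $h$, \cite[Proposition~25]{dFKX} gives $h(Z) = h(W)$, which is (1). Condition (2) follows from the Stein factorization $W \to U \to h(W)$ and Theorem~\ref{thm hm}(2) applied to both $W \to U$ and $Z \to U$; the latter has connected fibers by Shokurov--Koll\'ar connectedness. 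Condition (3) follows by ordering the pairs by $\dim W$. This handles divisorial contractions and flips uniformly. No splicing on the flipped side is needed: by \cite[Lemma~3.38]{km} a flip creates no new lc centers, so only the contracted strata of $Y$ must be paired. Your treatment of the birational endpoint via the negativity lemma matches the paper, as does your treatment of the final Fano contraction via Lemma~\ref{lemma qdlt fano} and induction applied to $W \to X$.
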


To make the first two conditions explicit, consider the Stein factorization of $f|_{W_i}$:
\begin{align*}
    W_i \xrightarrow[]{g_i} U_i \rightarrow f(W_i).
\end{align*}
We prove that $g_i$ has rationally connected generals fibers and that the restriction $g_i|_{Z_i}: Z_i \rightarrow U_i$ is dominant with rationally connected general fibers. Hence, $f(Z_i) = f(W_i)$, and $Z_i$ and $W_i$ have birational MRC bases.

As discussed in the introduction, the ordered partition yields a sequence of elementary collapses of  the dual complex $\mathcal{D}(E)$. Each pair $(Z_i, W_i)$ determines one such collapse, and the corresponding pairs of cells can be removed successively until only the vertex corresponding to $E_0$ remains. Thus, $\mathcal{D}(E)$ is collapsible. This is a combinatorial property stronger than contractibility.

The collapsibility of dual complexes is studied further in \cite{dFKX}, and in fact their arguments inspire the proof of Theorem \ref{thm cartier collapse}. The key additional feature here is that we track rational connectedness throughout the collapse: each pair has birational MRC bases.

In preparation for the proof, let us establish some terminology. Suppose $(Y, \Gamma)$ is a qdlt pair (see Definition \ref{defn qdlt}) and $f: Y \dashrightarrow X$ is a divisorial contraction or flip corresponding to a  $(K_Y+\Gamma)$-negative extremal ray. The \emph{exceptional locus} $\mathrm{Ex}(f)$ is the locus in $Y$ on which $f$ is not an isomorphism. If $S$ is an lc center of $(Y, \Gamma)$, we say $f$ \emph{contracts} $S$ if $S \subset \mathrm{Ex}(f)$. By \cite[Lemma $3.38$]{km}, the lc centers of the qdlt pair $(X, f_*\Gamma)$ are precisely the strict transforms of the non-contracted lc centers of $(Y, \Gamma)$.

The following lemma tracks the behavior of contracted lc centers under divisorial contractions and flips.

\begin{lemma}\label{lem collapse}
Let $(Y, \Gamma)$ be a $\mathbb{Q}$-factorial qdlt pair and suppose $f: Y \dashrightarrow X$ is a divisorial contraction or flip corresponding to a $(K_Y+\Gamma)$-negative extremal ray $R$. Suppose there exists an irreducible component $E_0$ of $E = \Gamma^{=1}$ such that $(E_0 \cdot R) > 0$. 

Then the contracted strata of $E$ admit an ordered partition into pairs $(Z_1, W_1), \ldots, (Z_a, W_a)$ satisfying conditions \textup{(1)}--\textup{(3)} of Theorem \ref{thm cartier collapse}.
\end{lemma}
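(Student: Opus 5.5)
Let $\phi: Y \rightarrow V$ denote the contraction of $R$ (so $\phi = f$ in the divisorial case and $\phi = h$ in the flip case), and let $\mathrm{Ex}(\phi)$ be its exceptional locus. The idea is to exploit the component $E_0$ with $(E_0 \cdot R) > 0$ as a ``collapsing direction'': every contracted stratum $S$ either is not contained in $E_0$, in which case I pair it with $S \cap E_0$, or is contained in $E_0$, in which case it arises as $S' \cap E_0$ for a unique stratum $S'$ not contained in $E_0$. Concretely, I will take the pairs to be $(Z, W) = (W \cap E_0, W)$, where $W$ ranges over contracted strata with $W \not\subset E_0$. I order them by increasing dimension of $W$, breaking ties arbitrarily.

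\textbf{Well-definedness.} The first step is to show that each contracted stratum $W \not\subset E_0$ meets $E_0$, and that $W \cap E_0$ is a single (connected) stratum, which is a divisor in $W$. Since $W \subset \mathrm{Ex}(\phi)$ and $W$ is covered by curves in the class of $R$ contracted by $\phi|_W$, positivity $(E_0 \cdot R) > 0$ forces $E_0|_W$ to be $\phi|_W$-ample. Hence $E_0 \cap W$ is nonempty, meets every positive-dimensional fiber of $\phi|_W$, and dominates $\phi(W)$. This gives $f(Z) = f(W)$, or more precisely $\phi(Z) = \phi(W)$, which is condition (1). Connectedness of $W \cap E_0$ should follow from $\phi|_W$-ampleness together with connectedness of the fibers of the Stein factorization $W \rightarrow U_W \rightarrow \phi(W)$. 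To get it I apply the vanishing/connectedness theorem of \cite[Theorem $8.1$]{fujino log mmp} to the qdlt pair $(W, \Gamma_W)$ obtained by adjunction. Here $-(K_W + \Gamma_W)$ is $\phi$-ample and $Z$ is an lc center of $(W, \Gamma_W)$, so fibers of $Z \rightarrow U_W$ are connected. Conversely, every contracted stratum $S \subset E_0$ other than $E_0$ itself is $S' \cap E_0$ for the stratum $S'$ cut out by removing $E_0$ from the defining components. That $S'$ is contracted follows from $S \subset \mathrm{Ex}(\phi)$ together with the fact that $\mathrm{Ex}(\phi)$ is the locus of $R$-curves, and this is the delicate point of this step. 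Note also that $E_0$ itself is not contracted, since $(E_0 \cdot R) > 0$ rules out $E_0 \subset \mathrm{Ex}$ for a divisorial contraction ($E_0 \cdot R < 0$ for the exceptional divisor) and a flip contracts no divisors. This gives a partition.

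\textbf{Condition (2).} By adjunction, $(W, \Gamma_W)$ is qdlt and $-(K_W + \Gamma_W) \sim_{\mathbb{Q}} -(K_Y + \Gamma)|_W$ is ample over $V$. Theorem \ref{thm hm}$(2)$ (after perturbing to klt, as in Lemma \ref{lem div cont}) shows that the fibers of $g_W: W \rightarrow U_W$ are rationally chain connected. Since these fibers are of klt type, Theorem \ref{thm hm}$(1)$ upgrades this to rational connectedness of the general fiber. The same argument applied to $(Z, \Gamma_Z)$ shows that $Z \rightarrow U_W$ has rationally connected general fibers. Hence $Z$, $W$, and $U_W$ share a birational MRC base.

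\textbf{Condition (3) and the main obstacle.} A stratum contained in $W$ is an intersection of more components. If it is contracted, it is either some $W'$ with $\dim W' < \dim W$ (so it comes earlier in the ordering), or some $Z' = W' \cap E_0$ with $W' \subset W$ and hence $\dim W' \leq \dim W$; the tie case $W' = W$ gives $Z$ itself. The remaining issue is that strata inside $W$ are automatically contracted because $W \subset \mathrm{Ex}$. I expect the main obstacle to be the connectedness of $W \cap E_0$ and the matching $S \leftrightarrow S'$ in the non-snc (qdlt, non-connected intersection) setting. There I would work locally via the toroidal charts and use \cite[Proposition $34$]{dFKX}.
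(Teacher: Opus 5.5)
Your pairs $(W\cap E_0,W)$, the ordering by $\dim W$, the Stein-factorization argument for (2), and the dimension count for (3) all match the paper. The gap is in your ``well-definedness'' step. That step carries all of the real content, and the paper does not prove it by hand. It takes the bijection $W\mapsto W\cap E_0$ between contracted strata outside and inside $E_0$ from the proof of \cite[Theorem~19]{dFKX}. It takes the fact that $W\cap E_0$ has a component dominating $\phi(W)$, which is condition (1), from \cite[Proposition~25]{dFKX}, applied at the generic point of $\phi(W)$.

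\textbf{The fiber-dimension claim.} You assert that $W$ is covered by $R$-curves contracted by $\phi|_W$. This does not follow from $W\subset\mathrm{Ex}(\phi)$, because the $R$-curves through points of $W$ need not lie in $W$. For example, let $\phi\colon Y\to\mathbb{A}^4$ be the blow-up of the origin, with exceptional divisor $F\cong\mathbb{P}^3$. Let $\Gamma=F+E_2+E_3+E_4$, where the $E_i$ are the strict transforms of three general hyperplanes through $0$. For a line $\ell\subset F$ one has $(K_Y+\Gamma)\cdot\ell=-3-1+3=-1$ and $E_2\cdot\ell=1$. Yet the stratum $F\cap E_2\cap E_3\cap E_4$ is a point of $\mathrm{Ex}(\phi)$ on which $\phi$ is finite. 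This stratum lies in $E_0=E_2$, so the lemma survives. But the example shows that positive-dimensionality of the fibers of $\phi|_W$ must be deduced from $W\not\subset E_0$, and you give no such argument. Without it, $\phi|_W$-ampleness of $E_0|_W$ is vacuous. Consequently, nonemptiness of $W\cap E_0$, condition (1), and the connectedness of $W\cap E_0$ are all unsupported.

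\textbf{Connectedness.} Applying Fujino's vanishing to a single lc center $Z$ only shows that $Z\to U_W$ has connected fibers. To see that $W\cap E_0$ is a single stratum, you must apply it to the whole union $W\cap E_0$. You must also use that this union meets every fiber of $W\to U_W$, which again needs the missing fiber-dimension point.

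\textbf{Surjectivity of the pairing.} You leave open that $S'$ is contracted whenever $S\subset E_0$ is contracted. This cannot be settled in toroidal charts via \cite[Proposition~34]{dFKX}, because it is a global statement about $\mathrm{Ex}(\phi)$. When $\phi|_S$ has positive-dimensional fibers, a negativity argument works. Suppose $S'\not\subset\mathrm{Ex}(\phi)$. Then $S$ is exceptional for the birational morphism from $S'$ to the normalization of $\phi(S')$. The other components of $E_0\cap S'$ are disjoint from $S$, so relative ampleness of $E_0|_{S'}$ makes $S$ relatively nef. This contradicts the negativity lemma. Strata on which $\phi$ is finite, such as the point above, need a separate argument.

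As written, the proposal establishes neither the partition nor condition (1). You need to prove these facts, or invoke \cite{dFKX} as the paper does.
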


\begin{proof}
In this situation, the proof of \cite[Theorem $19$]{dFKX} establishes a bijection
\begin{equation}\tag*{}\label{eq:lc-centers-bijection}
\begin{gathered}
\left\{
\begin{array}{c}
\text{contracted lc centers } W \text{ of } (Y,\Gamma) \\
\text{such that } W \not\subset E_0
\end{array}
\right\}
\;\simeq\;
\left\{
\begin{array}{c}
\text{contracted lc centers } Z \text{ of } (Y,\Gamma) \\
\text{such that } Z \subset E_0
\end{array}
\right\} \\
W \mapsto Z = W \cap E_0.
\end{gathered}
\end{equation}

Consider a pair $(Z, W)$ arising from this bijection, and let $h: Y \rightarrow V$ denote the contraction morphism. Applying \cite[Proposition $25$]{dFKX} to the generic point of $h(W)$, we find that, since $E_0$ meets every fiber of $h$ and $W \not \subset E_0$, the intersection $W \cap E_0$ has a unique irreducible component dominating $h(W)$. This component is $Z$. In particular, $h(W) = h(Z)$, so condition $(1)$ is satisfied.

To verify condition $(2)$, consider now the Stein factorization $W \xrightarrow[]{g} U \rightarrow h(W)$ of $h|_W$. Since $-(K_Y+\Gamma)$ is $h$-ample, the restriction $-(K_W + \Gamma_W)$ is $h|_W$-ample and hence also $g$-ample. After a small perturbation of the boundary, Theorem \ref{thm hm}$(2)$ shows that $g$ has rationally chain connected fibers. On the other hand, the restriction $g|_Z: Z \rightarrow U$ has connected fibers by the Shokurov-Kollár connectedness lemma. Moreover, $-(K_Z+ \Gamma_Z)$ is $g|_Z$-ample, so these fibers are rationally chain connected, again by Theorem \ref{thm hm}$(2)$. This implies that $Z$, $U$, and $W$ all have birational MRC bases, so condition $(2)$ is satisfied.

Finally, choose an ordering of the contracted pairs by ascending dimension $\dim W_i$. We show condition $(3)$ is satisfied. Indeed, if $W_j \subsetneq W_i$, then $\dim W_j < \dim W_i$, so $j < i$ by construction. If $Z_j \subset W_i$, then either $j < i$ or else $\dim Z_j = \dim Z_i$. But $Z_j \subset D_0$, so then $Z_j \subset D_0 \cap W_i = Z_i$. Since they have the same dimension, $Z_j = Z_i$, hence $j = i$. This concludes the proof.
\end{proof}

We are now ready to prove Theorem~\ref{thm cartier collapse}. The strategy is to apply Lemma~\ref{lem collapse} at each step of a suitable MMP. The
assumption that \(D\) is a \(\mathbb Q\)-Cartier divisor ensures that its preimage remains divisorial throughout the process. Without this assumption, exceptional components may appear in codimension at least two, where the collapse argument is harder to control.

\begin{proof}[Proof of Theorem \ref{thm cartier collapse}]
We induct on the relative dimension $\dim Y - \dim X$.

First suppose that $f$ is birational. Run an MMP on the qdlt pair $(Y, \Gamma)$ over $X$. Since $D$ is $\mathbb{Q}$-Cartier, this is the same as running an MMP on the klt pair $(Y, \Gamma - \epsilon f^*(D))$ over $X$ for $0 < \epsilon \ll 1$. By Theorem \ref{bchm pseudo-eff}, the MMP terminates and gives a factorization of $f$ as
\[
Y=Y_0 \overset{g_0}{\dashrightarrow} Y_1 \overset{g_1}{\dashrightarrow} \cdots \dashrightarrow
Y_r {\rightarrow} X .
\]

Each pair $(Y_i, \Gamma_i)$ is qdlt, where $\Gamma_i$ is the pushforward of $E$. Moreover, since $D$ is $\mathbb{Q}$-Cartier, its preimage in $Y_i$ is divisorial, and hence is equal to $\Gamma_i^{=1} = E_i$. For $0 \leq i < r$, the map $g_i: Y_i \dashrightarrow Y_{i+1}$ is either a divisorial contraction or a flip. By \cite[Lemma $21$]{dFKX}, either $g_i$ does not contract any lc centers, or the hypotheses of Lemma \ref{lem collapse} are satisfied. Applying Lemma \ref{lem collapse} successively, we pair off the strata contracted at each step of the MMP. We order these pairs first by the step $i$ at which they are contracted, and then according to the ordering provided by Lemma \ref{lem collapse}. This ordering satisfies properties \((1)\)--\((3)\). Indeed, if a stratum \(S\) is contracted, then every stratum contained in \(S\) is contracted at the same or an earlier step, which gives property \((3)\).

We claim that $(Y_r, \Gamma_r)$ has a unique lc center, namely the strict transform of $D$. Indeed, as in the proof of Proposition \ref{prop main A}, consider the auxiliary $\mathbb{Q}$-divisor
\begin{align*}
    A &= \Gamma - \Delta_Y \\ &\sim_{\mathbb{Q}} K_Y + \Gamma - f^*(K_X+D).
\end{align*}
Since $(X, D)$ is plt, the coefficients of $\Delta_Y$ are $< 1$, except for the strict transform of $D$. Hence $A$ is effective, and its support is precisely the union of the $f$-exceptional divisors. On the other hand, the pushforward $g_{r*}A$ is $\mathbb{Q}$-linearly equivalent to $K_{Y_r} + \Gamma_r$ over $X$. Since $Y_r$ is a log terminal model, this is relatively nef over $X$. But then $g_{r*}A$ is effective, exceptional, and relatively nef, so it must be trivial by the negativity lemma \cite[Lemma $3.39$]{km}. In other words, $g_r$ contracts all $f$-exceptional divisors. Taking $E_0$ to be the strict transform of $D$, this completes the proof in the birational case.

Now suppose $\dim Y > \dim X$. Run an MMP on the pair $(Y, \Gamma)$ over $X$. This is the same as running an MMP on $(Y, \Gamma - \epsilon f^*(D))$ for $0 < \epsilon \ll 1$, so it terminates with a Fano contraction by Theorem \ref{bchm not pseudo-eff}. The hypothesis that $K_Y+\Gamma$ not be $f$-pseudo-effective is satisfied, because the general fiber of $f$ is rationally connected of positive dimension. 

We obtain a factorization of $f$ of the form
\[
Y=Y_0 \overset{g}{\dashrightarrow} 
Y_r \overset{h}{\rightarrow} W \rightarrow X,
\]
where $g$ is a composition of flips and divisorial contractions and $h$ is a Fano contraction. As in the birational case, \cite[Lemma $21$]{dFKX} allows us to apply Lemma \ref{lem collapse} at each step of $g$. This pairs the strata contracted during the MMP; we order these pairs first by the step at which they are contracted and then by the ordering provided by Lemma \ref{lem collapse}. 

By Lemma \ref{lemma qdlt fano}, there exists a $\mathbb{Q}$-divisor $\Gamma_W$ such that $(W, \Gamma_W)$ is $\mathbb{Q}$-factorial qdlt, and $h$ induces a bijection between the strata of $(Y_r, \Gamma_r)$ and those of $(W, \Gamma_W)$. Moreover, the map $S \rightarrow h(S)$ between corresponding strata has rationally connected general fibers. We now apply the induction hypothesis to the morphism \(W\to X\). Adjoining the resulting pairs of strata after those obtained from the preceding MMP steps gives the required ordered partition. This completes the induction and proves the theorem.
\end{proof}

We have the following immediate consequence in the setting of Question \ref{intro-question}.

\begin{corollary}[{\(\!=\) Theorem~\ref{intro-thm degeneration}}]\label{cor degeneration}
Let \(f:Y\to C\) be a dominant projective morphism from a smooth connected variety to a smooth curve whose general fibers are rationally connected. Suppose $x \in C$ is a closed point and $E = f^{-1}(x)_{\mathrm{red}}$ is an snc divisor. Then \(E\) has a rationally connected irreducible component \(E_0\).

Moreover, the strata of $E$ other than $E_0$ admit an ordered partition into pairs 
\begin{align*}
    \big( Z_1,W_1 \big),\ldots, \big( Z_a,W_a \big) 
\end{align*}
satisfying conditions \textup{(1)}--\textup{(3)} of Theorem \ref{thm cartier collapse}.
\end{corollary}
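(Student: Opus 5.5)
This corollary should follow by applying Theorem \ref{thm cartier collapse} to $f: Y \rightarrow C$ with $X = C$ and $D = x$, after checking the hypotheses. Since $C$ is a smooth curve, the point $x$ is a Cartier divisor on $C$. The pair $(C, x)$ is plt, indeed snc: a smooth curve with a reduced point has log discrepancy zero only along $x$ itself. On $Y$, take $\Gamma = E = f^{-1}(x)_{\mathrm{red}}$. Because $Y$ is smooth it is $\mathbb{Q}$-factorial. Because $E$ is snc, the pair $(Y, E)$ is snc, hence dlt, hence qdlt: take $U = Y$ with trivial group in the toroidal definition. Finally $\Gamma^{=1} = E = f^{-1}(D)_{\mathrm{red}}$. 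The general fiber of $f$ is rationally connected by hypothesis.

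Theorem \ref{thm cartier collapse} then produces an irreducible component $E_0$ of $E$ such that $E_0 \rightarrow D = \{x\}$ has rationally connected general fibers. The target is a point, so the general fiber of this map is $E_0$ itself, up to connectedness. Two cases need handling:
\begin{itemize}
\item If the Stein factorization is trivial, $E_0$ is rationally connected.
\item More carefully, the statement "rationally connected general fibers" over a point means $E_0$ is rationally connected. $E_0$ is irreducible and smooth, being a component of an snc divisor, so there is no disconnectedness issue.
\end{itemize}
The ordered partition of the remaining strata into pairs $(Z_i, W_i)$ satisfying (1)--(3) is then exactly the second part of Theorem \ref{thm cartier collapse}. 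Here condition (1) reads $f(Z_i) = f(W_i) = \{x\}$, which is automatic, and the content is that $Z_i \subset W_i$ is a divisor.

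The only point requiring any care is interpreting the conclusion "$E_0 \rightarrow D$ has rationally connected general fibers" when $D$ is a point. I would note that in the proof of Theorem \ref{thm cartier collapse}, $E_0$ is obtained as the strict transform of $D$ on a model where $E_0 \rightarrow D$ factors through the relevant Stein factorizations. With $D$ a point, this gives rational connectedness of the whole component, since $E_0$ is connected. Rational connectedness is a birational invariant, so passing between $E_0$ and its transforms during the MMP is harmless.
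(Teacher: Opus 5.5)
Your proposal is correct and is exactly how the paper obtains this corollary: the paper states it as an immediate consequence of Theorem~\ref{thm cartier collapse} with $X = C$, $D = x$, $\Gamma = E$. Your check of the hypotheses ($(C,x)$ plt with $x$ Cartier, and $(Y,E)$ snc, hence $\mathbb{Q}$-factorial qdlt with $\Gamma^{=1} = f^{-1}(x)_{\mathrm{red}}$) supplies what that application needs. Your final paragraph is unnecessary, and slightly inaccurate, since $E_0$ is the strict transform of $D$ only in the birational case of that proof. When $D$ is a point, the theorem's conclusion that $E_0 \to D$ has rationally connected general fibers already says that $E_0$ is rationally connected.
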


It is also useful to apply Theorem \ref{thm cartier collapse} to resolutions of klt singularities. As mentioned in the introduction, an ad hoc version of the following argument plays a key role in the study of the algebraic $K$-group $K_{-n+2}(X)$, where $n = \dim X$. Indeed, \cite[Theorem $12.1$]{bps} shows that if $X$ has isolated singularities of klt type, then $K_{-n+2}(X)_\mathbb{Q} \simeq W_2 H^n(X, \mathbb{Q})$.

\begin{corollary}[{= Theorem \ref{intro-thm collapse}}]\label{corollary collapse}
Let $(X, \Delta)$ be a klt pair and $Z \subset X$ an irreducible subvariety. Then there exists a resolution $f: Y \rightarrow X$ such that $E = f^{-1}(Z)_{\mathrm{red}}$ is an snc divisor and the following holds. There exists an irreducible component $E_0$ of $E$ such that the map $E_0 \rightarrow Z$ is dominant with rationally connected general fibers, and the remaining strata of $E$ that dominate $Z$ admit an ordered partition into pairs 
\begin{align*}
    \big( Z_1,W_1 \big),\ldots, \big( Z_a,W_a \big) 
\end{align*} 
satisfying conditions \textup{(1)}--\textup{(3)} of Theorem \ref{thm cartier collapse}.

If $X$ is $\mathbb{Q}$-factorial, these properties hold for every resolution $f: Y \rightarrow X$ such that $f^{-1}(Z)_{\mathrm{red}}$ is an snc divisor.
\end{corollary}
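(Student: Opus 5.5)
The plan is to reduce Corollary \ref{corollary collapse} to Theorem \ref{thm cartier collapse}, applied to a suitable birational model over $X$ on which $Z$ is replaced by a $\mathbb{Q}$-Cartier divisor with a plt pair around it. Afterwards we pass to an snc resolution and track how the strata change.

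\textbf{Step 1: the non-$\mathbb{Q}$-factorial case.} Apply Lemma \ref{lem plt blow-up} to $(X,\Delta)$ and $Z$. This gives a projective birational morphism $g\colon V \to X$, an open neighborhood $U$ of $\eta_Z$, and a component $F_0$ of $g^{-1}(Z)$ over $U$. Its proof in fact produces, over $U$, a plt pair $(V, g_*^{-1}\Delta + F_0)$ with $F_0$ a $\mathbb{Q}$-Cartier divisor. The morphism $F_0 \to Z$ is dominant. Its general fibers are rationally chain connected by Theorem \ref{thm hm}(1), and rationally connected after passing to a resolution, as in the proof of Proposition \ref{prop main A}.

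Now take a log resolution $f\colon Y \to V$ that is an isomorphism near the generic point of $F_0$, with $E = (g\circ f)^{-1}(Z)_{\mathrm{red}}$ snc. Set $\Gamma = E + (\text{other exceptional divisors with coefficients } <1) + $ strict transform of $\Delta$. The strata of $E$ dominating $Z$ are exactly the strata of $E$ over $U$ that dominate $F_0$. Over $U$ we apply Theorem \ref{thm cartier collapse} to $f\colon Y \to V$ with $D = F_0$, shrinking to a neighborhood of $F_0$ if necessary. The pair $(Y,\Gamma)$ is snc, hence qdlt and $\mathbb{Q}$-factorial. Its reduced boundary is $f^{-1}(F_0)_{\mathrm{red}}$ near $F_0$, after discarding components not over $F_0$.

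The theorem gives a component $E_0$ over $F_0$ with rationally connected general fibers; $E_0$ is the strict transform of $F_0$. It also gives a collapse pairing of all other strata over $F_0$. We then keep only the pairs dominating $Z$. Conditions (1)–(3) survive this restriction because of condition (1): $f(Z_i) = f(W_i)$, so $Z_i$ dominates $Z$ if and only if $W_i$ does. Condition (3) restricted to dominating strata remains valid. Finally, composing with the rationally connected fibration $F_0 \to Z$ shows that $E_0 \to Z$ has rationally connected general fibers, and condition (2) is unaffected.

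\textbf{Step 2: the $\mathbb{Q}$-factorial case.} We want the statement for every resolution $f$ with $f^{-1}(Z)_{\mathrm{red}}$ snc. The idea is to apply the theorem directly over $X$ near $\eta_Z$, which is hard unless $Z$ is a divisor. I would instead first blow up to reach a model where $Z$ becomes a plt divisor: take the plt blow-up $V \to X$ over $\eta_Z$. By $\mathbb{Q}$-factoriality of $X$, its exceptional locus over $U$ is the divisor $F_0$. Then compare an arbitrary snc resolution $Y$ with one factoring through $V$ using weak factorization. We would need to check that the collapse-and-MRC structure of the dominating strata is preserved under smooth blow-ups with snc-compatible centers, which should follow by a direct local analysis.

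\textbf{Main obstacle.} I expect the main obstacle to be Step 2. It requires showing that the conclusion is independent of the resolution, or else finding a route that avoids factorization. One alternative I would try first is to run the MMP of Theorem \ref{thm cartier collapse} directly on $(Y, \Gamma)$ over $X$, localized at $\eta_Z$, in the style of Proposition \ref{prop main A}. Since $X$ is $\mathbb{Q}$-factorial, every exceptional locus is divisorial and so lies in $\operatorname{Supp}\Gamma$. Lemma \ref{lem collapse} would then apply at each step, and the final model is $X$ itself near $\eta_Z$. The remaining difficulty is the last step, where all of $f^{-1}(Z)$ gets contracted onto the non-divisor $Z$. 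Here one needs an $E_0$ with $(E_0\cdot R)>0$. I would try to extract one via the plt blow-up, with $E_0$ the divisor computing it.
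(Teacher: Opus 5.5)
Your Step 1 is essentially the paper's proof of the existence statement. The paper takes the plt blow-up from Lemma \ref{lem plt blow-up}, resolves it, applies Theorem \ref{thm cartier collapse} to the birational morphism onto the plt model, and discards the non-dominating pairs using condition (1). Your side claim that the strata dominating $Z$ are exactly those dominating $F_0$ is false: a stratum can map onto a proper subvariety of $F_0$ that still dominates $Z$. This is harmless, because you keep all pairs dominating $Z$ anyway.

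The gap is in Step 2. Your main route rests entirely on the assertion that the collapse-and-MRC structure of the dominating strata is preserved along a weak factorization ``by a direct local analysis.'' That assertion is the whole content of the claim, and it is not local. Weak factorization also involves blow-downs, under which the distinguished component $E_0$ (for instance the strict transform of the plt divisor) can be contracted. A new component with rationally connected general fibers over $Z$ would then have to be found, and the pairing rebuilt globally.

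Moreover, this route uses $\mathbb{Q}$-factoriality only cosmetically. Step 1 already gives one good resolution for every klt $X$, so the same comparison would prove the statement for every resolution of every klt variety. The paper explicitly leaves that case open, noting that the MMP method can break down for the quadric cone of \cite[Example 17]{dFKX}.

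Your fallback is the route the paper actually takes. It runs the MMP on $(Y, f^{-1}(Z)_{\mathrm{red}}+f^{-1}_*\Delta)$ over $X$, perturbed as in Proposition \ref{prop main A}, applies Lemma \ref{lem collapse} at each divisorial contraction or flip, and orders the pairs by step. Two points need fixing.

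First, the step from ``exceptional loci are divisorial, hence contained in $\operatorname{Supp}\Gamma$'' to ``Lemma \ref{lem collapse} applies'' is not automatic. Containment in the boundary is the hypothesis of Lemma \ref{lem flip}. Lemma \ref{lem collapse} instead needs a boundary component with $(E_0\cdot R)>0$. The paper gets this from \cite[Lemma 23]{dFKX}, and that is exactly where $\mathbb{Q}$-factoriality of $X$ is used.

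Second, the last step is not an obstacle, and a plt blow-up cannot help there: its divisor need not even be a divisor on the arbitrary resolution $Y$ you are given. Flips contract no divisors and a divisorial contraction contracts exactly one. So when the last component dominating $Z$ is contracted, it is the only dominating lc center left. Nothing needs to be paired, and no component with positive intersection is required. The paper simply takes this final lc center as $E_0$.

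That contraction is $(K+\Gamma)$-negative, and, as you note, the MMP ends at $X$ near $\eta_Z$. Theorem \ref{thm hm}(2), applied as in the proof of Lemma \ref{lem collapse}, then shows that $E_0\to Z$ has rationally chain connected fibers. As in Proposition \ref{prop main A}, it follows that $E_0\to Z$ has rationally connected general fibers.
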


\begin{proof}
Apply Lemma \ref{lem plt blow-up}. After shrinking $X$ around the generic point $\eta_Z$ of $Z$, this gives a $\mathbb{Q}$-factorial plt pair $(V, D)$ and a projective birational morphism $g: V \rightarrow X$ such that $D = g^{-1}(Z)$. Choose a resolution $h: Y \rightarrow V$ such that $h^{-1}(D)_{\mathrm{red}}$ is an snc divisor, and set $f = g \circ h$. We apply Theorem \ref{thm cartier collapse} to the morphism $h$. Discarding all pairs of strata that do not dominate $Z$, this proves the first statement.

For the second statement, assume that $X$ is $\mathbb{Q}$-factorial, and let $f: Y \rightarrow X$ be a resolution such that $f^{-1}(Z)_{\mathrm{red}}$ is an snc divisor. Run an MMP on $(Y, f^{-1}(Z)_{\mathrm{red}} + f_*^{-1} \Delta)$ over $X$. This is possible by the same perturbation argument as in the proof of Proposition \ref{prop main A}. We now follow the proof of Theorem \ref{thm cartier collapse}. At each divisorial contraction or flip, the hypotheses of Lemma \ref{lem collapse} are satisfied by \cite[Lemma $23$]{dFKX} (which requires that $X$ be $\mathbb{Q}$-factorial). Discarding all pairs of contracted strata that do not dominate $Z$ and taking $E_0$ to be the final lc center dominating $Z$, this completes the proof.
\end{proof}

When $X$ is not $\mathbb{Q}$-factorial, it is unknown whether the conclusion of Corollary \ref{corollary collapse} holds for every such resolution. The method of proof can break down, as shown by studying the quadric cone in \cite[Example $17$]{dFKX}. The issue is that a morphism $Y_i \rightarrow X$ arising in the MMP may have exceptional components of codimension $\geq 2$, in which case Lemma \ref{lem collapse} need not apply.

\subsection{R-equivalence classes}\label{section R-equ}

In this section, we show that the set $X(\mathbb{C})/R$ of $R$-equivalence classes of complex points defines an RC-functor. The argument is more involved than it was for the other examples in \S \ref{subsection exmps}, relying on the refinement of Theorem \ref{intro-theorem main} discussed in Remark \ref{rmk R-equ}.

\begin{definition}\label{defn R-equ}
Let $X$ be a complex variety. Two closed points $x,y\in X(\mathbb{C})$ are \emph{R-equivalent} in the sense of \cite{manin} if they can be joined by a chain of rational curves in $X$. We denote the resulting set of equivalence classes by $X(\mathbb{C})/R$.
\end{definition}

We now prove the following proposition.

\begin{proposition}\label{prop R-equ}
The functor
\begin{align*}
    \mathrm{Sm} \rightarrow \mathrm{Spc}, \qquad X \mapsto X(\mathbb{C})/R
\end{align*}
is RC-invariant. Let $R(\mathbb{C}): \mathrm{Sch} \rightarrow \mathrm{Spc}$ denotes its extension. Then, for every variety $X$, there is an isomorphism
\begin{align*}
    \pi_0 \big( R(\mathbb{C})(X) \big) \simeq X(\mathbb{C})/R.
\end{align*}
\end{proposition}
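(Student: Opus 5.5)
We split the argument into two parts: RC-invariance of $X \mapsto X(\mathbb{C})/R$, regarded as a discrete space, and the identification of $\pi_0$ of its extension with $X(\mathbb{C})/R$ for arbitrary $X$. The second part will be run by induction on dimension, using Remark \ref{rmk R-equ}.

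\textbf{RC-invariance.} Let $f: Y \to X$ be a dominant projective morphism of smooth connected varieties whose general fiber is rationally connected. The map $Y(\mathbb{C})/R \to X(\mathbb{C})/R$ is well defined, since images of rational curves are points or rational curves.

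\emph{Injectivity.} Suppose $x,y \in X$ are joined by a chain of rational curves $C_1, \ldots, C_m$. Each $C_k$ should lift to a chain of rational curves in $Y$ together with fibers. Take the normalization $\mathbb{P}^1 \to C_k$ and the base change $Y \times_X \mathbb{P}^1 \to \mathbb{P}^1$, whose general fiber is rationally connected. By Graber--Harris--Starr there is a section over a component dominating $\mathbb{P}^1$; for the possibly non-normal base change we apply it to a resolution of the dominating component. It remains to know that every fiber of $f$ is rationally chain connected. This follows from \cite[Corollary IV.3.5]{rational curves} when $f$ is equidimensional, but in general it is exactly Corollary \ref{intro-cor fibers}(1), which is circular here. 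To break the circle I would use Remark \ref{rmk R-equ}.

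\textbf{Induction.} Let $G'$ be the functor $X \mapsto \tau_{\le 0}$ of the extension, i.e.\ $\pi_0$. Assume by induction on $n$ that RC-invariance holds for smooth varieties of dimension $<n$, and that $\pi_0 G(X) = X(\mathbb{C})/R$ for all $X$ of dimension $<n$. Remark \ref{rmk R-equ} applied to $f$ (smooth, hence klt) then gives $\pi_0 G(f^{-1}(x)) \simeq \pi_0 G(x) = \pt$ for each point $x$. Since $\dim f^{-1}(x) < n$, this yields $f^{-1}(x)(\mathbb{C})/R = \pt$, i.e.\ every fiber is rationally chain connected. Now the curve-lifting argument above shows injectivity.

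\emph{Surjectivity.} This is immediate from dominance and the rational chain connectedness of the fibers: every point of $X$ is R-equivalent to a point in the image, because chains through fibers can be extended, or more simply because $f$ is surjective onto $X$ after properness of the projective map over the image together with density arguments. More carefully, I would replace $X$ by the image, which is closed and dense, hence all of $X$.

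\textbf{Identification of $\pi_0$.} We show that $X \mapsto X(\mathbb{C})/R$ (as sets) sends abstract blow-up squares to pushouts of sets. Since $\pi_0$ preserves pushouts, uniqueness as in Proposition \ref{prop pi1} then gives $\pi_0 R(\mathbb{C})(X) \simeq X(\mathbb{C})/R$.

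For a square $E \to Y$, $Z \to X$, surjectivity of $Y \sqcup Z \to X$ is clear. For injectivity, a rational curve $C \subset X$ is either contained in $Z$, or its strict transform in $Y$ is a rational curve. Moreover, two points of $Y$ over the same point of $Z$ both lie in $E$ and map to the same point of $Z$, so they are identified in the pushout. Hence chains in $X$ lift to zigzags in the pushout $Y/R \sqcup_{E/R} Z/R$.

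The main obstacle is the injectivity step, namely rational chain connectedness of all fibers, which forces the intertwined induction with Remark \ref{rmk R-equ}. We must also make sure that this remark only uses RC-invariance in dimension $<n$ for the functor $G$ built from the pushout property, which holds in all dimensions by the blow-up-square computation.
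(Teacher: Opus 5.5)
Your handling of abstract blow-up squares and the identification $\pi_0 R(\mathbb{C})(X) \simeq X(\mathbb{C})/R$ matches the paper. That part needs no induction: it holds in all dimensions once the pushout property is checked. You also correctly see that Remark~\ref{rmk R-equ} is what breaks the circularity.

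The gap is in the injectivity step. You assert that for a rational curve $C_k \subset X$ the base change $Y \times_X \mathbb{P}^1 \to \mathbb{P}^1$ has rationally connected general fiber. The hypothesis, however, only concerns the general fiber of $f$ over $X$. When $\dim X \geq 2$, a rational curve $C_k$ may lie entirely in the closed locus over which the fibers of $f$ are reducible or have non-rationally-connected components. In that case Graber--Harris--Starr does not apply to a dominating component of the base change.

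Knowing that every fiber of $f$ is rationally chain connected does not repair this, and that is all you extract from Remark~\ref{rmk R-equ} applied to points. Graber--Harris--Starr needs rationally connected general fibers, and rational chain connectedness is strictly weaker (think of a cone over an elliptic curve). So you obtain no section or rational multisection over $C_k$, and hence no lift of the chain to $Y$.

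The missing idea is to apply Remark~\ref{rmk R-equ} to the curves themselves, not only to points. Let $Z \subsetneq X$ be any proper subvariety that is a point or a rational curve. The Remark together with the inductive hypothesis gives $\pi_0 R(\mathbb{C})(f^{-1}(Z)) \simeq \pi_0 R(\mathbb{C})(Z) = \pt$. That is, $f^{-1}(Z)$ is rationally chain connected. Injectivity then follows at once by chaining through the preimages of the junction points of $C_1, \ldots, C_m$, with no curve-lifting at all; this is the paper's argument.

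Because the Remark only covers proper subvarieties, you must treat two cases separately:
\begin{enumerate}
\item $X$ is a point. Then $Y$ is rationally connected by hypothesis. Note that your application of the Remark to $Z = \{x\}$ silently assumes $\dim X > 0$.
\item $X$ is itself a rational curve. Then $Y$ is rationally connected by Corollary~1.3 of Graber--Harris--Starr. This is the only place that theorem is needed.
\end{enumerate}
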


This is a fundamental example of an RC-functor because it detects rational chain connectedness. Namely, the extension space $R(\mathbb{C})(X)$ is connected if and only if $X$ is rationally chain connected.

\begin{proof}
We first show that the assignment $X \mapsto X(\mathbb{C})/R$ sends abstract blow-up squares to pushout squares of sets. Given an abstract blow-up square \eqref{eqn abus}, consider the induced map
\begin{align*}
    \big( Y(\mathbb{C})/R \big) \amalg_{E(\mathbb{C})/R} \big( Z(\mathbb{C})/R  \big)\rightarrow X(\mathbb{C})/R
\end{align*}
It is surjective because $f \amalg i: Y \amalg Z \rightarrow X$ is dominant. Moreover, the classes of any two points in $Y \amalg Z$ with the same image in $X$ are identified in the pushout. To prove injectivity, it therefore suffices to show that every $R$-equivalence relation generated by a rational curve $C \subset X$ lifts to the pushout. If $C \subset Z$, the relation is accounted for in $Z(\mathbb{C})/R$. Otherwise, the strict transform $\widetilde{C} \subset Y$ maps birationally onto $C$, so the relation is accounted for in $Y(\mathbb{C})/R$.

In particular, restricting to smooth blow-up squares, the functor $X \mapsto X(\mathbb{C})/R$ satisfies the hypotheses of Theorem \ref{thm extension}. It therefore admits an extension $R(\mathbb{C}): \mathrm{Sch} \rightarrow \mathrm{Spc}$.

Let us show that, for every variety $X$, there is an isomorphism $\pi_0 \big( R(\mathbb{C})(X) \big) \simeq X(\mathbb{C})/R$. As in the proofs of Propositions \ref{prop pi1} and \ref{prop CH_0}, consider the composite
\begin{align*}
    \mathrm{Sch} \xrightarrow[]{R(\mathbb{C})} \mathrm{Spc} \xrightarrow[]{\pi_0} \mathrm{Set}.
\end{align*}
Since $\pi_0$ preserves colimits, the composite sends abstract blow-up squares to pushouts. On smooth varieties, it agrees with the assignment \(X\mapsto X(\mathbb{C})/R\), regarded as a functor to \(\mathrm{Set}\). We have seen that the latter also sends abstract blow-up squares to pushouts. Therefore, by the uniqueness part of Theorem \ref{thm extension}, the two functors are naturally isomorphic. 

Finally, we prove RC-invariance. Let $f: Y \rightarrow X$ be a dominant projective morphism between smooth connected varieties whose general fiber is rationally connected. We argue by induction on $n = \dim Y$. Since $f$ is dominant, the induced map
\begin{align*}
    Y(\mathbb{C})/R \rightarrow X(\mathbb{C})/R
\end{align*}
is surjective. To prove injectivity, it suffices to show that for every point or rational curve $Z \subset X$, the preimage $f^{-1}(Z)$ is rationally chain connected. 

For every proper subvariety $Z \subsetneq X$, Remark \ref{rmk R-equ} and the inductive hypothesis imply that
\[
   R(\mathbb{C}) \big( f^{-1}(Z) \big) \simeq R(\mathbb{C}) \big(Z \big).
\]
By considering the associated sets of connected components, it follows that $f^{-1}(Z)$ is rationally chain connected whenever $Z \subsetneq X$ is a  point or rational curve.

It remains to consider the cases in which \(X\) itself is a point or a rational curve. If $X$ is a point, then $Y$ is rationally connected by assumption. If $X$ is a rational curve, then $Y$ is rationally connected by \cite[Corollary $1.3$]{ghs}.
\end{proof}

\begin{remark}
The same method yields an alternative proof that the Chow group of zero-cycles \(CH_0\) defines an RC-functor, since \(CH_0\) is governed by the behavior of points and curves in the same way as \(R\)-equivalence.
\end{remark}

\subsection{Dual complex, weights, and further applications}\label{subsection applications}

In this section, we collect applications of our main Theorem \ref{intro-theorem main} to various RC-functors. We begin with the dual complex.

\begin{corollary}[{= Theorem \ref{intro-thm dual complex}}]\label{cor dual complex}
Suppose $f: Y \rightarrow X$ is a projective morphism between connected varieties of klt type whose general fiber is rationally connected. Then for any closed subvariety $Z \subset X$, there is a homotopy equivalence
\begin{align*}
    \mathcal{D} \big( f^{-1}(Z) \big) \simeq \mathcal{D} \big( Z \big).
\end{align*} 
\end{corollary}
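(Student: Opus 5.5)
This is a direct specialization of Theorem \ref{theorem main} to the RC-functor $h^0$ of Definition \ref{defn dual complex}. The functor $h^0 \colon \mathrm{Sm} \to \mathrm{Spc}$ sends every non-empty connected smooth variety to a point. It is RC-invariant because any morphism between one-point spaces is an equivalence. By Theorem \ref{thm extension}, as explained in \S\ref{subsection properties}, it has an extension $G = Rh^0 = \mathcal{D}$, which sends abstract blow-up squares to homotopy pushouts.

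The plan is to apply Theorem \ref{theorem main} with $F = h^0$. Its hypotheses are exactly the hypotheses of the corollary, once we note that "rationally connected general fiber" forces $f$ to be dominant. Indeed, if $f$ were not dominant, the general fiber over a general point of $X$ would be empty, and we read the hypothesis as implicitly including dominance, as in Theorem \ref{intro-thm dual complex}. The theorem then says that $f$ is $\mathcal{D}$-compatible. That is, for every closed subvariety $Z \subset X$, the induced map $\mathcal{D}(f^{-1}(Z)) \to \mathcal{D}(Z)$ is an equivalence in $\mathrm{Spc}$, which is a homotopy equivalence, as claimed.

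Since everything reduces formally to Theorem \ref{theorem main}, there is essentially no obstacle. The only points needing care are bookkeeping. First, dominance must be made explicit. Second, the equivalence in the $\infty$-category $\mathrm{Spc}$ must be identified with a homotopy equivalence of spaces, using CW models of the homotopy colimits; this identification is standard. If one wants a concrete model, Corollary \ref{cor X subset A} and Example \ref{exmp snc} identify both sides with usual dual complexes of snc divisors arising from embedded resolutions.
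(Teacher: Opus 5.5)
Your proposal is correct and matches the paper's proof, which is the one-line application of Theorem \ref{theorem main} (i.e.\ Theorem \ref{intro-theorem main}) to the RC-functor $h^0$, whose extension is $\mathcal{D} = Rh^0$. Your remark that dominance is implicit here is reasonable bookkeeping: the corollary omits the word, whereas Theorem \ref{intro-thm dual complex} states it.
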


\begin{proof}
Apply Theorem \ref{intro-theorem main} to the RC-functor $\mathcal{D} = R h^0$ of Definition \ref{defn dual complex}.
\end{proof}

The preceding corollary describes the homotopy invariance of the dual complex under rationally connected fibrations. We next relate its rational cohomology to the weight-zero part of the cohomology of $X$ and apply vanishing results for $W_0 H^i(X, \mathbb{Q})$ proved in the appendix.

\begin{corollary}[{\(\Rightarrow\) Proposition \ref{intro-prop weight zero}}]
Let $X$ be a projective variety. For every $i \geq 0$, there is an isomorphism
\begin{align*}
    H^i(\mathcal{D}(X), \mathbb{Q}) \simeq W_0 H^i(X, \mathbb{Q}).
\end{align*}
Consequently:
\begin{enumerate}
    \item If $X$ has rational singularities, then $\mathcal{D}(X)$ is $\mathbb{Q}$-contractible.
    \item If $X$ is equidimensional and $s = \dim \mathrm{Sing}(X)$, then $H^i( \mathcal{D}(X), \mathbb{Q}) = 0$ for all $0 < i < \mathrm{depth} \mathscr{O}_X - s$. In particular, if $X$ has isolated Cohen-Macaulay singularities, then $\mathcal{D}(X)$ is a $\mathbb{Q}$-homology wedge of spheres.
\end{enumerate}
\end{corollary}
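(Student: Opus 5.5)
The first isomorphism is essentially already in hand. By Proposition \ref{prop H^i}, for projective $X$ we have
\[
H^i(\mathcal{D}(X),\mathbb{Z}) \simeq H^i_0(X,\mathbb{Z}) = E_2^{i,0},
\]
the bottom row of the integral weight spectral sequence \eqref{eqn weight ss}. Since cohomology of the singular cochain complex commutes with tensoring by $\mathbb{Q}$ over a field of characteristic zero (universal coefficients), we get $H^i(\mathcal{D}(X),\mathbb{Q}) \simeq E_2^{i,0}\otimes\mathbb{Q}$. The rational weight spectral sequence \eqref{eqn ratl weight ss} degenerates at $E_2$, so this group is $\gr_0^W H^i(X,\mathbb{Q})$. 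Weights on $H^i$ of a projective variety are $\geq 0$, hence $\gr^W_0 = W_0$. This gives the displayed isomorphism.

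For (1), I use Lemma \ref{lem weight restrictions} from the appendix. For $X$ projective with rational singularities, it says $H^i(X,\mathbb{Q})$ has weights $\geq \min\{2,i\}$, so $W_0H^i(X,\mathbb{Q})=0$ for $i>0$. If I want to avoid citing the lemma, I would argue via Hodge numbers. $W_0H^i$ is of Hodge type $(0,0)$, so it injects into $\gr_F^0 H^i(X,\mathbb{C}) \simeq \mathbb{H}^i(X,\underline{\Omega}^0_X)$ by Proposition \ref{prop structure} with $j=0$; this is the $i$-th cohomology of the bottom row of the hyperresolution applied to $H^0(-,\mathscr O)$. Rational singularities are Du Bois, so $\underline{\Omega}^0_X\simeq\mathscr O_X$, and for a resolution $\mu$ we have $R\mu_*\mathscr O=\mathscr O_X$. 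The question is then whether $\gr^W_0$ of $H^i(X,\mathscr O_X)$ vanishes. Rather than prove this by hand, I would cite Lemma \ref{lem weight restrictions} directly.

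There is one more point in (1). Vanishing of reduced rational cohomology gives $\mathbb{Q}$-acyclicity, and $\mathbb{Q}$-contractibility also needs $H^0=\mathbb{Q}$, i.e.\ $X$ connected. I would read (1) as applying to connected $X$, or equivalently componentwise.

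For (2), I invoke the appendix Lefschetz-type result, Theorem \ref{thm lefschetz}. For equidimensional projective $X$ with $s=\dim\mathrm{Sing}(X)$, it gives $W_0H^i(X,\mathbb{Q})=0$ for $0<i<\depth\underline{\Omega}^0_X - s$. I then note that $\depth\underline{\Omega}^0_X\geq\depth\mathscr O_X$, as stated in the introduction, and combine this with the first isomorphism.

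For the last clause, take isolated Cohen--Macaulay singularities with $n=\dim X$, so $s=0$ and $\depth=n$. Then $\tilde H^i(\mathcal D(X),\mathbb{Q})=0$ for $i<n$. $\mathcal D(X)$ is a finite CW complex built from a hyperresolution with $\dim X_J\le n-|J|+1$; it is also the suspension of the exceptional dual complexes, which have dimension $\le n-1$ (Example \ref{exmp suspension}). Hence its cells have dimension $\le n$, so only $\tilde H^n$ can survive. A rational homology wedge of $n$-spheres is exactly this. The main obstacle is the precise depth bookkeeping in Theorem \ref{thm lefschetz}; the rest is formal.
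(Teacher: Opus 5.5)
Your proposal is correct and follows the paper's proof, which cites Proposition~\ref{prop H^i} for the isomorphism, Lemma~\ref{lem weight restrictions} for (1), and Corollary~\ref{cor O depth} (which is where $\depth\underline{\Omega}^0_X\geq\depth\mathscr{O}_X$ enters) for (2). One imprecision: the weight vanishing you attribute directly to Theorem~\ref{thm lefschetz} is really Corollary~\ref{cor depth lefschetz} with $m=0$, obtained by applying that theorem repeatedly to $s+1$ general hyperplane sections and using purity of the resulting smooth section; your connectedness caveat in (1) and your cell-dimension argument for the wedge-of-spheres clause are sound details the paper leaves implicit.
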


\begin{proof}
By Proposition \ref{prop H^i}, we have $H^i(\mathcal{D}(X), \mathbb{Q}) \simeq W_0 H^i(X, \mathbb{Q})$. Then $(1)$ and $(2)$ follow from Lemma \ref{lem weight restrictions} and Corollary \ref{cor O depth}, respectively.
\end{proof}

\begin{remark}[{\bf Dual complex for rational singularities}]\label{rmk stepanov}
Suppose that \(x\in X\) is an isolated rational singularity, and let \(E\) be the exceptional divisor of a log resolution. Stepanov \cite{stepanov} asked whether the dual complex \(\mathcal{D}(E)\) must be contractible. By Proposition \ref{intro-prop weight zero}\((1)\), together with Example \ref{exmp suspension}, the dual complex \(\mathcal{D}(E)\) is \(\mathbb Q\)-contractible. However, it need not be contractible: \cite[Example 8.1]{payne} constructs a counterexample with \(\mathcal{D}(E) \simeq \mathbb{RP}^2\). More generally, \cite[Theorem $3$]{kollar prescribed} shows that every \(\mathbb Q\)-contractible finite cell complex occurs as the dual complex of an exceptional divisor over an isolated rational singularity.
\end{remark}

We now turn to integral coefficients. Recall that the graded pieces $\gr_q^W H^{p+q}(X, \mathbb{Z})$ are subquotients of the terms $E_2^{p,q} = H^{p+q}_q(X, \mathbb{Z})$ in the integral weight spectral sequence \eqref{eqn weight ss}, and they coincide after tensoring with $\mathbb{Q}$. The bottom two rows $q = 0, 1$ define RC-invariant functors. 

\begin{corollary}[{\(\Rightarrow\) Corollary \ref{intro-cor integral weight}}]\label{cor klt sing}
Suppose $f: Y \rightarrow X$ is a morphism between connected projective varieties of klt type whose general fiber is rationally connected. Then for any closed subvariety $Z \subset X$ and every $i \geq 0$, there are isomorphisms 
\begin{align*}
    H^i_0 \big( Z, \mathbb{Z} \big) \simeq H^i_0 \big( f^{-1}(Z), \mathbb{Z} \big) \hspace{.3cm} \text{and} \hspace{.3cm} H^i_1 \big(Z, \mathbb{Z} \big) \simeq H^i_1 \big(f^{-1}(Z), \mathbb{Z} \big).
\end{align*}
Thus, if $X$ is projective and of klt type, then
\begin{align*}
    H^i(X, \mathbb{Z}) \text{ has weights } \geq \min \{ 2, i\}.
\end{align*}
\end{corollary}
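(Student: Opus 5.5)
The first statement comes from applying Theorem \ref{theorem main} to the two RC-functors $H^0(-,\mathbb{Z})$ and $H^1(-,\mathbb{Z})$ of Proposition \ref{prop H^i}, with values in $\mathcal{D}(\mathbb{Z})^{\mathrm{op}}_{\geq 0}$. The theorem gives equivalences $RH^0(f^{-1}Z)\simeq RH^0(Z)$ and $RH^1(f^{-1}Z)\simeq RH^1(Z)$. Since $X$ and $Y$ are projective, $Z$ and $f^{-1}(Z)$ are projective as well. Taking cohomology and using the identifications $H^i(RH^0(-))\simeq H^i_0(-,\mathbb{Z})$ and $H^i(RH^1(-))\simeq H^{i+1}_1(-,\mathbb{Z})$ from Proposition \ref{prop H^i} then gives both isomorphisms, after reindexing $i+1\mapsto i$ in the second. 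These isomorphisms should be induced by the natural maps of hyperresolutions, so they are compatible with the functoriality of the weight spectral sequence. For the first statement, that compatibility is not needed.

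For the weight statement, let $X$ be projective of klt type; we may assume $X$ is connected, treating each component separately. Apply the first part with $Z=X$ and $f\colon Y\to X$ a resolution of singularities. Its general fiber is a point, hence rationally connected, and $Y$ is smooth, so of klt type. Since $Y$ is smooth projective, its trivial hyperresolution gives $H^i_0(Y,\mathbb{Z})=0$ for $i>0$ and $H^i_1(Y,\mathbb{Z})=0$ for $i\neq 1$. Hence $H^i_0(X,\mathbb{Z})=E_2^{i,0}=0$ for $i>0$, and $H^i_1(X,\mathbb{Z})=E_2^{i-1,1}=0$ for $i\neq 1$.

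Since $\gr^W_q H^{p+q}(X,\mathbb{Z})$ is a subquotient of $E_2^{p,q}=H^{p+q}_q(X,\mathbb{Z})$ (Proposition \ref{prop integral weight}), we get $\gr^W_0H^i(X,\mathbb{Z})=0$ for $i>0$ and $\gr^W_1H^i(X,\mathbb{Z})=0$ for $i\ne1$. The weight filtration on $H^i$ arises from the spectral sequence with nonnegative weights, so $W_{-1}H^i=0$. Therefore:
\begin{itemize}
\item for $i\geq 2$, $W_1H^i(X,\mathbb{Z})=0$, so the weights are $\geq 2$;
\item for $i=1$, $\gr_0^W$ vanishes, so the weights are $\geq 1$;
\item for $i=0$ there is nothing to prove.
\end{itemize}
This gives weights $\geq\min\{2,i\}$.

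The main point to check is the indexing convention of the integral filtration. One must verify that $W_\ell H^i$ is the image of the terms with $q\le \ell$, so that the vanishing of $E_2^{p,q}$ for $q\in\{0,1\}$ really kills $W_1$. The paper's convention that $\gr^W_q H^{p+q}$ is a subquotient of $E_2^{p,q}$, and that $E_1^{p,q}=0$ for $q<0$, supplies exactly this. A secondary point is that Theorem \ref{theorem main} requires connected varieties; this is handled by the reduction to components of $X$.
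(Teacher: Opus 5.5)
Your proposal is correct and follows the paper's proof: apply Theorem~\ref{theorem main} to $RH^0$ and $RH^1$, pass to cohomology via Proposition~\ref{prop H^i}, then take $Z = X$ with $f$ a resolution and use that $\gr^W_q H^{p+q}(X,\mathbb{Z})$ is a subquotient of $E_2^{p,q}$. Your version is slightly more careful than the paper's, which asserts $H^i_1(X,\mathbb{Z}) = 0$ for all $i>0$; as you note, $H^1_1(X,\mathbb{Z}) = E_2^{0,1}$ need not vanish, but this is harmless because for $i=1$ the claimed bound only requires $\gr^W_0 H^1(X,\mathbb{Z})$ to vanish.
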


\begin{proof}
Apply Theorem \ref{intro-theorem main} to the RC-functors $RH^0$ and $RH^1$ of Proposition \ref{prop H^i}. For the latter statement, take $Z = X$ and $f$ a resolution of singularities. We obtain $H^i_0(X, \mathbb{Z}) = H^i_1(X, \mathbb{Z}) = 0$ for all $i > 0$. Since each group $\gr_q^W H^{p+q}(X, \mathbb{Z})$ is a subquotient of $H_q^{p+q}(X, \mathbb{Z})$, the desired vanishing follows.
\end{proof}

By way of comparison, Lemma \ref{lem weight restrictions} says that if \(X\) is projective with rational singularities, then
\[
    H^i(X,\mathbb Q)\text{ has weights }\ge \min\{2,i\}.
\]
Since varieties of klt type have rational singularities, Corollary \ref{cor klt sing} gives an integral refinement of this rational weight vanishing.
We suspect that no such refinement holds for general rational singularities, but we are unable to prove this. Indeed, Remark \ref{rmk stepanov} gives examples for which $E_2^{i,0} = H^i_0(X, \mathbb{Z}) \neq 0$, but it remains unclear which, if any, of these classes survive to the $E_\infty$-page of the weight spectral sequence. 

We now turn to the more geometric RC-functors introduced in \S \ref{subsection exmps}.

\begin{corollary}[{\(\Rightarrow\) Corollary \ref{intro-cor fibers}}]\label{cor preimages}
Let $f: Y \rightarrow X$ be a dominant projective morphism between connected varieties of klt type whose general fiber is rationally connected. Then for any closed subvariety $Z \subset X$, the following hold:
\begin{enumerate}
    \item There is a bijection
        \begin{align*}
            \big( f^{-1}(Z) \big) (\mathbb{C})/R \simeq Z(\mathbb{C}) / R.
        \end{align*}
    \item There is an isomorphism
        \begin{align*}
            RCH_0 \big( f^{-1}(Z) \big) \simeq RCH_0 \big( Z \big).
        \end{align*}
        In particular, 
        \begin{align*}
            CH_0 \big( f^{-1}(Z) \big) \simeq CH_0 \big( Z \big).
        \end{align*}
    \item There is a quasi-isomorphism
        \begin{align*}
            R f_* \underline{\Omega}_{f^{-1}(Z)}^0 \simeq \underline{\Omega}_Z^0.
        \end{align*}
    \item There is an isomorphism
        \begin{align*}
            \pi_1 \big( f^{-1}(Z) \big) \simeq \pi_1 \big( Z \big).
        \end{align*}
\end{enumerate}
\end{corollary}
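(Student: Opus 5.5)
The plan is to apply Theorem \ref{theorem main} once for each of the four RC-functors of \S\ref{subsection exmps}. Each time we then pass from the equivalence of extensions to the classical invariant, using the comparison statement already proved for that functor. Throughout, $f$ restricts to a proper map $f^{-1}(Z)\to Z$, and the equivalence $G(f^{-1}(Z))\simeq G(Z)$ supplied by Theorem \ref{theorem main} is the map induced by $f$. This matters for identifying the maps in each part.

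For (1), take $G=R(\mathbb{C})$ from Proposition \ref{prop R-equ}, which shows that $X\mapsto X(\mathbb{C})/R$ is RC-invariant. Theorem \ref{theorem main} gives $R(\mathbb{C})(f^{-1}(Z))\simeq R(\mathbb{C})(Z)$. Applying $\pi_0$ and the identification $\pi_0(R(\mathbb{C})(X))\simeq X(\mathbb{C})/R$ from Proposition \ref{prop R-equ} yields the bijection. Part (4) is identical with $G=R\tau_{\leq 1}$: take $\pi_1$ and use Proposition \ref{prop pi1}. One small point is basepoints. Since $f^{-1}(Z)$ and $Z$ have the same $\pi_0$ by the equivalence, the isomorphism holds componentwise. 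In particular it holds on the nose whenever $Z$, and hence $f^{-1}(Z)$, is connected.

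For (2), $CH_0$ is only a properly RC-invariant functor (Proposition \ref{prop CH_0}). As stated at the start of \S\ref{subsection pf main thm}, the proof of Theorem \ref{theorem main} applies verbatim in that setting, since every morphism used there (resolutions, MMP contractions, Fano contractions, closed embeddings) is proper. So $RCH_0(f^{-1}(Z))\simeq RCH_0(Z)$. Taking $H_0$ gives $CH_0(f^{-1}(Z))\simeq CH_0(Z)$ by Proposition \ref{prop CH_0}. That identification is stated for projective varieties, so I would either assume $X$ projective or note that its proof (the pushout property of $CH_0$) works for arbitrary varieties.

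For (3), use the relative RC-functor $\mathscr{O}_{/S}$ with $S=X$ (Proposition \ref{prop O}). Regard $Z$ and $f^{-1}(Z)$ as $X$-schemes via the inclusion and via $f$. The relative version of Theorem \ref{theorem main} then gives $R\mathscr{O}_{/X}(f^{-1}(Z))\simeq R\mathscr{O}_{/X}(Z)$, that is, $Rf_*\underline{\Omega}^0_{f^{-1}(Z)}\simeq\underline{\Omega}^0_Z$ with pushforwards to $X$ suppressed.

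The main obstacle is checking that the relative and properly RC-invariant variants really go through the proof of Theorem \ref{theorem main}. Every auxiliary variety must be an $X$-scheme, and every map must be proper and over $X$. This holds because all constructions in Propositions \ref{prop main A} and \ref{prop main B} are relative MMPs over $X$ or over varieties mapping to $X$. The other point to check is that the uniqueness statement of Theorem \ref{thm extension} holds in these variants; this is where the descent-category construction cited in Proposition \ref{prop CH_0} is needed.
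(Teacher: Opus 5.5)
Your proposal is correct and is the paper's proof. The paper applies Theorem~\ref{theorem main} to $R(\mathbb{C})$, $RCH_0$, $R\mathscr{O}_{/X}$ and $R\tau_{\leq 1}$ and reads off the classical invariants from Propositions~\ref{prop R-equ}, \ref{prop CH_0}, \ref{prop O} and \ref{prop pi1}; your extra care (basepoints for $\pi_1$, the projectivity hypothesis in $H_0(RCH_0(-))\simeq CH_0(-)$, and citing Proposition~\ref{prop O} rather than the paper's cited Proposition~\ref{prop structure} for the relative structure-sheaf functor) only makes explicit what the paper's one-line proof leaves implicit.
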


\begin{proof}
We apply Theorem \ref{intro-theorem main} to the RC functors $R(\mathbb{C})$ of Proposition \ref{prop R-equ}, $RCH_0$ of Proposition \ref{prop CH_0}, $R \mathscr{O}_{/X}$ of Proposition \ref{prop structure}, and $R \tau_{\leq 1}$ of Proposition \ref{prop pi1}.
\end{proof}

Taking $Z$ to be a point recovers Corollary \ref{intro-cor fibers}. At the opposite extreme, let $f: Y \rightarrow X$ be a resolution and take $Z = X$. Corollary \ref{cor preimages} implies that if $X$ is of klt type, then there is a quasi-isomorphism 
\begin{align*}
    Rf_* \mathscr{O}_Y \simeq \underline{\Omega}_X^0. 
\end{align*}
In the terminology of \cite{parkpopa}, this says that $X$ satisfies condition $D_0$. The statement itself is not new: varieties of klt type are well-known to have rational singularities, and a variety has rational singularities if and only it is Du Bois and satisfies condition $D_0$. Nevertheless, it is noteworthy that condition \(D_0\) emerges directly from RC-functors.

In forthcoming work \cite{burke CH_0}, we use Hodge-theoretic methods to prove that statement $(3)$ remains valid under the weaker assumptions that $X$ and $Y$ satisfy condition $D_0$ and the general fiber $F$ is connected with $H^i(F, \mathscr{O}_F) = 0$ for all $i > 0$. Likewise, more refined Chow-theoretic arguments prove statement $(2)$, assuming only that the general fiber satisfies $CH_0(F) \simeq \mathbb{Z}$. The following example shows that no analogous result holds for the fundamental group.

\begin{example}[{\bf Simply-connected families}]\label{exmp pi_1}
Let $S$ be a K3 surface equipped with a fixed-point-free involution $\iota$, and let $E$ be an elliptic curve with involution $x \mapsto -x$. The diagonal action of $\mathbb{Z}/2\mathbb{Z}$ on $S \times E$ is fixed-point-free, and it induces a morphism
\[
    f\colon
    Y:= (S\times E)/(\mathbb Z/2\mathbb Z)
    \longrightarrow
    E/(\mathbb Z/2\mathbb Z)\simeq \mathbb P^1.
\]

The general fiber of $f$ is isomorphic to $S$, and hence is simply-connected. Over each of the four branch points, however, the fiber is the Enriques surface $S / \iota$. Hence,
\begin{align*}
    \pi_1( S/\iota) \simeq \mathbb{Z} / 2 \mathbb{Z}.
\end{align*}
Moreover, $\pi_1(Y) \not\simeq \pi_1( \mathbb{P}^1 ) = 1$.
\end{example}

This paper studies RC-functors over $\mathbb{C}$. We conclude by recording an analogous phenomenon for point-counting over finite fields \cite{esnault, fr, witt rational}. We do not know whether it reflects any formal connection with the results above.

\begin{remark}[{\bf Point counts}]
Let $\mathbb{F}_q$ be a finite field. If $Y / \mathbb{F}_q$ is a smooth projective variety which is rationally chain connected over $\overline{k(Y)}$, then \cite{esnault} proves
\begin{align*}
    \# Y(\mathbb{F}_q)  \equiv 1 \pmod q.
\end{align*}
More generally, it suffices to assume that $CH_0(Y_{\overline{k(Y)}}) \simeq \mathbb{Z}$. An extension to varieties with Witt-rational singularities is established in \cite{witt rational}.

There is a relative version that is reminiscent of the situation for RC-functors. Indeed, \cite{fr} proves that if $f: Y \rightarrow X$ is a dominant projective morphism between smooth connected varieties over $\mathbb{F}_q$ whose generic fiber $F$ is rationally chain connected over $\overline{k(Y)}$, then
\[
    \#f^{-1}(x)(\mathbb{F}_q)\equiv 1 \pmod q
    \qquad\text{for every }x\in X(k).
\]

In particular,
\[
    \#f^{-1}(Z)(\mathbb{F}_q)\equiv \# Z(\mathbb{F}_q) \pmod q
    \qquad\text{for every }Z \subset X.
\]
More generally, the same conclusion holds under the weaker assumption $CH_0(F_{\overline{k(Y)}})_{\mathbb{Q}} \simeq \mathbb{Q}$.
\end{remark}

\section{Appendix}

\subsection{Weak Lefschetz for singular varieties}

In this appendix, we prove a Lefschetz hyperplane theorem for singular varieties, Theorem~\ref{thm lefschetz}, formulated in terms of the depth of Du Bois complexes. Applying this theorem to the weight filtration yields Corollary \ref{cor depth lefschetz}, and this is used in the main text to prove Proposition \ref{intro-prop weight zero}. We recall only the Hodge-theoretic background essential to these arguments, referring the interested reader to \cite{parkpopa} for a more detailed treatment.

Theorem~\ref{thm lefschetz} is motivated by the Lefschetz hyperplane theorem of \cite[Theorem B]{parkpopa}, which is formulated in terms of the \emph{local cohomological defect} $\lcdef(X)$. Suppose that $X$ is embedded in a smooth variety $Y$. Writing $\mathcal{H}^q_X(\mathcal{O}_Y)$ for the local cohomology sheaves, the first nonzero such sheaf occurs in degree $q = \codim_Y X$. One defines
\begin{align*}
    \lcdef(X) := \max \{ q : \mathcal{H}^q_X(\mathcal{O}_Y) \neq 0 \} - \codim_Y X.
\end{align*}
This invariant is independent of the chosen embedding. If $X$ has quotient singularities or local complete intersection singularities, then $\lcdef(X) = 0$; see \cite[Example $2.4$]{parkpopa} for further examples and discussion.

\begin{theorem}\cite[Theorem B]{parkpopa}\label{thm pp B}
Let $X$ be an equidimensional projective variety of dimension $n$, $D$ an ample effective Cartier divisor on $X$, and $U = X \setminus D$. Then the restriction map
\[
    H^i(X, \mathbb{Q}) \rightarrow H^i(D, \mathbb{Q})
\]
is an isomorphism for $i \leq n-2 - \lcdef(U)$ and injective for $i = n-1-\lcdef(U)$.
\end{theorem}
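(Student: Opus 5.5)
We follow the standard route for Lefschetz-type statements through Artin vanishing on the complement. Let $j: U \hookrightarrow X$ and $i: D \hookrightarrow X$ be the inclusions. From the triangle $j_!\mathbb{Q}_U \to \mathbb{Q}_X \to i_*\mathbb{Q}_D \xrightarrow{+1}$ we get the long exact sequence
\[
\cdots \to H^i_c(U,\mathbb{Q}) \to H^i(X,\mathbb{Q}) \to H^i(D,\mathbb{Q}) \to H^{i+1}_c(U,\mathbb{Q}) \to \cdots.
\]
It is therefore enough to prove that $H^k_c(U,\mathbb{Q}) = 0$ for all $k \leq n-1-\lcdef(U)$. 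This vanishing makes the restriction map an isomorphism for $i \leq n-2-\lcdef(U)$ and injective for $i = n-1-\lcdef(U)$. By Verdier duality, $H^k_c(U,\mathbb{Q})$ is dual to $H^{-k}(U, \mathbb{D}\mathbb{Q}_U)$, where $\mathbb{D}\mathbb{Q}_U = \omega_U$ is the dualizing complex. The statement thus reduces to
\[
H^{m}(U, \mathbb{D}\mathbb{Q}_U[-2n]) = 0 \qquad \text{for } m \geq n+1+\lcdef(U).
\]
Since $D$ is an ample Cartier divisor, $U$ is affine of dimension $n$.

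The key input is the characterization of $\lcdef$ in terms of perverse amplitude, from the work of Park--Popa building on Ogus and Reichelt--Saito--Walther. For an equidimensional $U$ of dimension $n$, the complex $\mathbb{Q}_U[n]$ has perverse cohomology only in degrees $[-\lcdef(U), 0]$. Dually, $\mathbb{D}(\mathbb{Q}_U[n]) = \mathbb{D}\mathbb{Q}_U[-n]$ has perverse cohomology in degrees $[0, \lcdef(U)]$. We will take this as known; it is exactly the meaning of $\lcdef(U) = \max\{q : \mathcal{H}^q_U(\mathcal{O}) \neq 0\} - \codim U$ via the Riemann--Hilbert correspondence.

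We then apply Artin vanishing for perverse sheaves on the affine variety $U$: if $P$ is perverse, then $H^k(U, P) = 0$ for $k > 0$. A dévissage along the perverse truncation triangles shows that $K = \mathbb{D}\mathbb{Q}_U[-n]$, whose perverse cohomology sits in $[0,\lcdef(U)]$, satisfies $H^k(U,K) = 0$ for $k > \lcdef(U)$. Translating back, we have $H^{m}(U,\mathbb{D}\mathbb{Q}_U[-2n]) = H^{m-n}(U,K)$. This vanishes once $m - n > \lcdef(U)$, that is, for $m \geq n+1+\lcdef(U)$, as required. The analytic and algebraic pictures match because compactly supported cohomology and the constructible complexes are computed analytically.

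The main obstacle is the perverse-amplitude characterization of $\lcdef$. First it must be applied to $U$, which is quasi-projective and possibly non-embeddable globally in a way compatible with $X$. One handles this locally, since perverse amplitude is local. Second, one must pass between $\mathcal{D}$-module local cohomology and the perverse $t$-structure on $\mathbb{Q}_U[n]$. If citing that characterization is not acceptable, the fallback is to use the embedding $U \subset Y$ into a smooth $Y$. There $\mathbb{Q}_U[n]$ corresponds under de Rham to $R\Gamma_U(\mathcal{O}_Y)$ shifted by $\codim_Y U$. Its $\mathcal{D}$-module cohomologies are the $\mathcal{H}^q_U(\mathcal{O}_Y)$, which are regular holonomic. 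These are nonzero exactly for $q \in [\codim_Y U, \codim_Y U + \lcdef(U)]$, which directly gives the claimed perverse range.
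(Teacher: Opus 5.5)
Your proof is correct. The paper does not prove this statement itself. It cites it and notes that it is the case $k = n-\lcdef(U)$, with no restriction on $p$, of Theorem~\ref{thm lefschetz}, via the formula $\lcdef(U) = n-\min_p\{\depth\underline{\Omega}^p_U+p\}$.

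The proof of Theorem~\ref{thm lefschetz} has the same skeleton as yours:
the triangle $j_!\mathbb{Q}^H_U\to\mathbb{Q}^H_X\to\mathbb{Q}^H_D$, reduction to $H^i_c(U,\mathbb{Q})=0$ for $i\le k-1$, duality, and Artin vanishing on the affine $U$ fed into a spectral sequence over the cohomologies $\mathcal{H}^j\mathbb{D}\mathbb{Q}^H_U$.

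The key input differs.
\begin{itemize}
\item The paper works one Hodge-graded piece $\gr^F_{-p}H^i_c(U,\mathbb{Q})$ at a time. It converts the depth of $\underline{\Omega}^p_U$ into vanishing of $\gr^F_p\dr(\mathcal{H}^j\mathbb{D}\mathbb{Q}^H_U)$ for $j\ge -k+1$, using Saito's description of Du Bois complexes, duality of Hodge modules, and \cite[Corollary~6.3]{burke}.
\item You use directly the unfiltered fact that $\mathbb{D}(\mathbb{Q}_U[n])$ has perverse amplitude $[0,\lcdef(U)]$.
\end{itemize}
With all $p$ allowed, the paper's vanishing says exactly that ${}^p\mathcal{H}^j(\mathbb{D}\mathbb{Q}_U)=0$ for $j\ge -n+\lcdef(U)+1$. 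That is your amplitude statement, so its argument collapses to yours. Your route needs only Riemann--Hilbert, Verdier duality and topological Artin vanishing. The paper's route buys the refinement to individual Hodge degrees $p\le m$ under partial depth hypotheses, which perverse amplitude alone cannot detect.

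One correction to your fallback paragraph. Under the de Rham functor, $R\Gamma_U(\mathcal{O}_Y)[\codim_Y U]$ corresponds to the Verdier dual of $\mathbb{Q}_U[n]$, not to $\mathbb{Q}_U[n]$ itself. This is because local cohomology is $i_*i^!$: one has $\dr(R\Gamma_U\mathcal{O}_Y)\simeq i_*i^!\mathbb{C}_Y[\dim Y]\simeq i_*\mathbb{D}(\mathbb{C}_U[n])[-\codim_Y U]$, up to Tate twist.

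Read literally, your sentence would place $\mathbb{Q}_U[n]$ in perverse degrees $[0,\lcdef(U)]$, which is false when $\lcdef(U)>0$. The correct dictionary gives $K=\mathbb{D}\mathbb{Q}_U[-n]\in {}^pD^{[0,\lcdef(U)]}$ directly. That is precisely what your d\'evissage uses, and only the upper bound matters.

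Your worry about embeddability is also moot: $U$ is affine, hence a closed subvariety of some $\mathbb{A}^N$.
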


The local cohomological defect admits a description in terms of finer holomorphic invariants encoded by Du Bois complexes. Let 
\[
\underline{\Omega}_X^p \in
D^b_{\mathrm{coh}}(\mathscr{O}_X)
\]
denote the \emph{$p$th Du Bois complex} of $X$. It serves as a Hodge-theoretic replacement for the sheaf of holomorphic $p$-forms on a smooth variety. Moreover, it can be computed in terms of a hyperresolution. If $n= \dim X$, then by \cite[Theorem E]{local cohomology},
\begin{align*}
    \lcdef(X) = n - \min_{0 \leq p \leq n} \{ \depth \underline{\Omega}_X^p + p \}.
\end{align*}

This formula suggests refining the Lefschetz theorem by keeping track of the individual depths \(\depth \underline{\Omega}_X^p\), rather than only
their minimum. The resulting refinement gives sharper control over the restriction map in specified Hodge degrees.

\begin{theorem}\label{thm lefschetz}
Let $X$ be an equidimensional projective variety of dimension $n$, $D$ be an ample effective Cartier divisor on $X$, and $U = X \setminus D$. If $\depth \underline{\Omega}_U^p+p \geq k$ for all $p \leq m$, then the restriction map 
\begin{align*}
    H^{p,q}(X) \rightarrow H^{p,q}(D)
\end{align*}
is an isomorphism for $p+q \leq k-2$ and $\min \{ p,q \} \leq m$. Moreover, it is injective for $p + q \leq k-1$ and $\min \{ p, q-1 \} \leq m$.
\end{theorem}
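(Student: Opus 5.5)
We follow the strategy of \cite[Theorem B]{parkpopa}, keeping track of each Hodge degree $p$ separately instead of passing to $\lcdef(U)$. Let $j\colon U \hookrightarrow X$ be the inclusion. The long exact sequence of the pair gives
\[
\cdots \to H^i_c(U,\mathbb{C}) \to H^i(X,\mathbb{C}) \to H^i(D,\mathbb{C}) \to H^{i+1}_c(U,\mathbb{C}) \to \cdots,
\]
a sequence of mixed Hodge structures. Since morphisms of mixed Hodge structures are strict for $F$ and $W$, the functor $\gr_F^p$ of the complexified sequence stays exact. It therefore suffices to show that $\gr_F^p H^i_c(U,\mathbb{C})=0$ in the relevant range. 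Here we read $H^{p,q}$ as $\gr_F^p H^{p+q}$ (or the Hodge numbers of the mixed structure, handled the same way). Then:
\begin{itemize}
\item isomorphism in degree $i=p+q$ needs vanishing for $i$ and $i+1$;
\item injectivity needs vanishing for $i$ only.
\end{itemize}
Complex conjugation on the underlying real structure exchanges the roles of $p$ and $q$. This is why the hypotheses are phrased through $\min\{p,q\}\le m$.

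\textbf{Step 1: the compactly supported complex.} By the theory of Du Bois complexes of pairs, $\gr_F^p$ of compactly supported cohomology of $U$ is computed by the Du Bois complex of the pair,
\[
\gr_F^p H^i_c(U,\mathbb{C}) \simeq \mathbb{H}^{i-p}\bigl(X, \underline{\Omega}^p_{X,D}\bigr).
\]
Here $\underline{\Omega}^p_{X,D}$ is the cone of $\underline{\Omega}^p_X \to \underline{\Omega}^p_D$ shifted by $-1$. Its restriction to $U$ is $\underline{\Omega}^p_U$.

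\textbf{Step 2: vanishing.} We apply Grothendieck--Serre duality on $X$ and use that $D$ is ample. The complex $\underline{\Omega}^p_{X,D}$ agrees with $j_!$-type extensions twisted by $\mathscr{O}_X(-\ell D)$ for $\ell\gg0$; this is the standard trick of \cite{parkpopa}, replacing $\underline{\Omega}^p_{X,D}$ by $\underline{\Omega}^p_{X}\otimes\mathscr{O}(-\ell D)$ up to terms controlled by $D$. Duality then gives
\[
\mathbb{H}^{i-p}\bigl(X,\underline{\Omega}^p_{X,D}\bigr)^\vee \simeq \mathbb{H}^{p-i}\bigl(X, \mathbf{D}(\underline{\Omega}^p_{X,D})\bigr).
\]
On the affine open $U$ this becomes the cohomology of the local Ext sheaves $\mathcal{E}xt^{-s}(\underline{\Omega}^p_U,\omega_U^\bullet)$. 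The depth condition $\depth\underline{\Omega}^p_U \ge k-p$ says exactly that these vanish for $-s$ outside $[-n,\, -(k-p)]$. Since $U$ is affine, higher coherent cohomology vanishes. We then read off that $\gr_F^p H^i_c(U)=0$ for $i\le k-1$ with $p\le m$. By conjugation the same holds with $q=i-p\le m$, noting that $\gr^q_F$ of the conjugate structure is controlled by $\underline{\Omega}^q$.

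\textbf{Step 3: the boundary case.} The two-sided indexing requires care in the injectivity statement, where the condition reads $\min\{p,q-1\}\le m$. There the connecting term is $H^{i}_c(U)$ in Hodge type $(p,q)$ with $q$ replaced by $q-1$ after shifting degree. We check the shift bookkeeping in the exact sequence directly.

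The main obstacle is Step 2. One must make rigorous the identification of $\gr_F^p H^\bullet_c(U)$ with hypercohomology of a complex whose dual is governed by $\depth\underline{\Omega}^p_U$, and push the vanishing from the affine $U$ to $X$ via ampleness of $D$. We would first try to mimic directly the proof of \cite[Theorem B]{parkpopa}, replacing their single invariant $\lcdef$ by the $p$-wise depth in each graded piece.
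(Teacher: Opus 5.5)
Your reduction matches the paper's: the sequence $H^i_c(U)\to H^i(X)\to H^i(D)\to H^{i+1}_c(U)$, strictness, and reducing to $\gr_F^pH^i_c(U)=0$ for $i\le k-1$, $p\le m$. Your Step~1 identification with $\mathbb{H}^{i-p}(X,\underline{\Omega}^p_{X,D})$ is also correct. The gap is Step~2, and it is more than a missing detail: the mechanism you propose is false. That mechanism deduces the vanishing of $\mathbb{H}^{p-i}(X,\mathbb{D}\underline{\Omega}^p_{X,D})$ from the vanishing of the local Ext sheaves of $\underline{\Omega}^p_U$ and the affineness of $U$, separately for each $p$. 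For a counterexample, take $X$ an abelian surface and $D$ a smooth ample curve. Then $\underline{\Omega}^2_U=\omega_U$ has depth $2$, so $\depth\underline{\Omega}^2_U+2=4$, and $\mathbb{H}^t(U,\mathbb{D}\underline{\Omega}^2_U)=H^{t+2}(U,\mathscr{O}_U)=0$ for all $t>-2$. Your reasoning with $p=2$, $k=4$ would therefore give $\gr_F^2H^3_c(U)=0$. But $\underline{\Omega}^2_D=0$, so $\underline{\Omega}^2_{X,D}=\omega_X$ and $\gr_F^2H^3_c(U)\simeq H^1(X,\omega_X)\neq 0$; indeed $H^3_c(U)\simeq H^3(X)$.

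The twisting trick cannot repair this. The colimit $\varinjlim_\ell\mathbb{H}^t\bigl(X,\mathbb{D}(\underline{\Omega}^p_{X,D})(\ell D)\bigr)$ is the coherent cohomology of $U$, but the map out of the $\ell=0$ term need not be injective. Here it is $H^1(X,\mathscr{O}_X)\to H^1(X,\mathscr{O}_X(\ell D))=0$, and $D$ is already ample. Even for $X$ smooth and $D$ snc, what you need is logarithmic Akizuki--Nakano vanishing in the form $H^s(X,\Omega^r_X(\log D))=0$ for $r+s>n$. That requires genuinely Hodge-theoretic input: degeneration of the log Hodge--de Rham spectral sequence plus Artin vanishing for $U$. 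Coherent vanishing on $U$ sees neither, and your hypothesis says nothing about $\underline{\Omega}^p_{X,D}$ near $D$.

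The example also shows what a correct argument must do. First, the hypothesis must be used for all $p'\le p$ at once: here $\depth\mathscr{O}_U=2$ forces $k\le 2$, so the theorem only claims vanishing for $i\le 1$. Second, $\gr_F H^\bullet_c(U)$ must be computed intrinsically on $U$, compatibly with Artin vanishing. The paper does both with mixed Hodge modules, in four steps:
\begin{enumerate}
\item Saito's identification $\underline{\Omega}^p_U\simeq\gr^F_{-p}\dr(\mathbb{Q}^H_U[n])[p-n]$, together with duality, turns the hypothesis into $\mathcal{H}^j\gr^F_p\dr(\mathbb{D}\mathbb{Q}^H_U)=0$ for $j\ge -k+1$, $p\le m$.
\item Because this holds for all $p\le m$ simultaneously, it passes to the cohomology modules: $\gr^F_p\dr(\mathcal{H}^j\mathbb{D}\mathbb{Q}^H_U)=0$ in the same range \cite[Corollary~6.3]{burke}.
\item This low-Hodge-degree vanishing survives $a_{U*}$ \cite[Lemma~3.4]{park}.
\item Right t-exactness of $a_{U*}$ for affine $U$ (Artin vanishing) truncates the spectral sequence $\mathcal{H}^a a_{U*}\mathcal{H}^b\mathbb{D}\mathbb{Q}^H_U\Rightarrow\mathbb{D}H^{-a-b}_c(U)$. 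This yields $\gr^F_{-p}H^i_c(U)=0$ for $i\le k-1$, $p\le m$.
\end{enumerate}
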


The theorem recovers the Lefschetz Theorem \ref{thm pp B} as a special case. Indeed, if we take $k = n - \lcdef(U)$, the formula for $\lcdef(U)$ above shows that
\[
    \depth \underline{\Omega}_U^p+p\geq k
    \qquad\text{for all } p \geq 0.
\]

Theorem~\ref{thm lefschetz} also implies the Lefschetz Theorem
\cite[Theorem~9.2]{parkpopa}, which is formulated in terms of condition
$D_m$ or duality up to level $m$.

\begin{corollary}\cite[Theorem $9.2$]{parkpopa}
Let $X$ be an equidimensional projective variety of dimension $n$, and let $D$ be an ample effective Cartier divisor on $X$. Suppose that $U = X \setminus D$ satisfies condition $D_m$. Then the restriction map 
\begin{align*}
    H^{p,q}(X) \rightarrow H^{p,q}(D)
\end{align*}
is an isomorphism for $p+q \leq n-2$ and $\min \{ p,q \} \leq m$. Moreover, it is injective for $p + q \leq n-1$ and $\min \{ p, q-1 \} \leq m$.
\end{corollary}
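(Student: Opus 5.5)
The plan is to deduce the corollary from Theorem \ref{thm lefschetz} by checking that condition $D_m$ on $U$ gives the depth hypothesis with $k = n$. Recall from \cite{parkpopa} that $U$ satisfies condition $D_m$ when, for all $p \leq m$, the natural morphism
\[
\psi_p\colon \underline{\Omega}_U^p \longrightarrow \mathbb{D}\big(\underline{\Omega}_U^{n-p}\big)[-n]
\]
is a quasi-isomorphism, where $\mathbb{D}(-) = R\mathcal{H}om(-,\omega_U^\bullet)$ is Grothendieck duality. The corollary then follows at once, provided we show
\[
\depth \underline{\Omega}_U^p + p \geq n \qquad \text{for all } p \leq m.
\]
Taking $k = n$ in Theorem \ref{thm lefschetz} gives exactly the ranges $p+q \leq n-2$ and $p+q \leq n-1$ in the statement.

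To get the depth bound, I would use the standard amplitude facts for Du Bois complexes. Since $\underline{\Omega}_U^{n-p}$ is computed from a hyperresolution whose pieces have dimension $\leq n - |J| + 1$, the complex $\underline{\Omega}_U^{n-p}$ has cohomology in degrees $[0, p]$. Moreover, $\dim \operatorname{Supp} \mathcal{H}^j(\underline{\Omega}_U^{n-p}) \leq n - j$, which comes from the same dimension bound on the hyperresolution or from \cite{parkpopa}. Local duality then gives the bound at each point $x$ of dimension $d$: the local cohomology $H^i_x$ of $\mathbb{D}(\underline{\Omega}_U^{n-p})[-n]$ is dual to the local cohomology of $\underline{\Omega}_U^{n-p}$ in degree $n-d-i$ at $x$. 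Feeding in the support bounds, this should vanish for $i < n - p - d$ after adjusting for the point's dimension. That is precisely the statement $\depth \underline{\Omega}_U^p \geq n - p$ in the convention of \cite{local cohomology}.

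The main obstacle is this bookkeeping with depth conventions for complexes and the support estimates for the higher cohomology sheaves. I expect it to be cleanest to quote the characterization in \cite{parkpopa} directly: $D_m$ is equivalent to a depth-type inequality $\depth \underline{\Omega}_U^p \geq n-p$ for $p \leq m$, via their duality lemma. If that is not available, I would run the local duality computation above one cohomology sheaf at a time, inducting over the truncation triangles of $\underline{\Omega}_U^{n-p}$.
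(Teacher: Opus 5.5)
Your proposal follows the paper's route: the paper cites \cite[Lemma~4.2]{burke} for $\depth \underline{\Omega}_U^p + p \geq n$ when $p \leq m$ under $D_m$, and then applies Theorem~\ref{thm lefschetz} with $k = n$. With the paper's definition of depth this bound is immediate, and the support estimates and pointwise local duality are unnecessary: $D_m$ gives $\mathbb{D}\underline{\Omega}_U^p \simeq \underline{\Omega}_U^{n-p}[n]$, and since $\underline{\Omega}_U^{n-p}$ has amplitude $[0,p]$, this complex has cohomology only in degrees $\leq p-n$.
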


\begin{proof}
Condition \(D_m\) implies \(\depth \underline{\Omega}_U^p + p \geq n\) for all \(p\leq m\) by \cite[Lemma~4.2]{burke}, so the corollary follows immediately from Theorem \ref{thm lefschetz}.
\end{proof}

We now turn to the proof of Theorem~\ref{thm lefschetz}.

\begin{proof}[Proof of Theorem \ref{thm lefschetz}]
Let $j: U \hookrightarrow X$ denote the inclusion. There is a distinguished triangle
\begin{align*}
    j_! \mathbb{Q}_U^H \rightarrow \mathbb{Q}_X^H \rightarrow \mathbb{Q}_D^H \xrightarrow[]{+1}
\end{align*}
in the derived category $D^b(\mathrm{MHM}(X))$ of mixed Hodge modules. 

Taking hypercohomology gives a long exact sequence
\begin{align*}
    \ldots \rightarrow H^i_c(U, \mathbb{Q}) \rightarrow H^i(X, \mathbb{Q}) \rightarrow H^i(D, \mathbb{Q}) \rightarrow H^{i+1}_c(U, \mathbb{Q}) \rightarrow \ldots 
\end{align*}
It therefore suffices to prove $\gr_{-p}^F H_c^i(U, \mathbb{Q}) = 0$ whenever $i \leq k-1$ and $\min \{ p, q-1\} \leq m$. We establish this vanishing in the range $p \leq m$. The vanishing for $q \leq m+1$ then follows by duality, exactly as in the proof of \cite[Theorem $9.2$]{parkpopa}.

Saito \cite[Theorem $0.2$]{saito 00} identifies the Du Bois complex with the graded de Rham pieces of the trivial Hodge module, up to shift:
\[
    \underline{\Omega}_U^p \simeq \gr_{-p}^F \dr \big( \mathbb{Q}_U^H[n] \big)[p-n].
\]
Recall also that the depth of a complex is defined by $\depth \underline{\Omega}_U^p := - \max \{ j : \mathcal{H}^j \mathbb{D} \underline{\Omega}_U^p \neq 0 \}$, where $\mathbb{D}(-) = \mathbf{R} \mathcal{H}om(-, \omega_U^\bullet)$. Using standard properties of duality for Hodge modules, we compute
\begin{align*}
   \mathbb{D}  \underline{\Omega}_U^p \simeq& \mathbb{D} \bigg(\gr_{-p}^F \dr \big( \mathbb{Q}_U^H[n] \big)[p-n] \bigg) \\
   \simeq& \gr_p^F \dr \big( \mathbb{D} \mathbb{Q}_U^H \big) [-p].
\end{align*}
Therefore, the assumption $\depth \underline{\Omega}_U^p + p \geq k$ for all $p \leq m$ is equivalent to vanishing
\begin{align*}
    \mathcal{H}^j \gr_p^F \dr (\mathbb{D} \mathbb{Q}_U^H) = 0 \hspace{.5cm} \text{ for } j \geq -k+1 \text{ and } p \leq m.
\end{align*}
For a more detailed discussion of the depth of Du Bois complexes, see \cite[\S $4$]{burke}.

By \cite[Corollary~$6.3$]{burke}, the preceding vanishing is equivalent to
\begin{align*}
    \gr_p^F \dr (\mathcal{H}^j \mathbb{D} \mathbb{Q}_U^H) = 0 \hspace{.5cm} \text{ for } j \geq -k+1 \text{ and } p \leq m.
\end{align*}
By \cite[Lemma $3.4$]{park}, this vanishing is preserved under pushforward by the structure morphism $a_U: U \rightarrow \operatorname{Spec}\mathbb{C}$. Hence
\begin{align}\label{eqn depth vanishing}
    \gr_p^F \dr ( a_{U*} \mathcal{H}^j \mathbb{D} \mathbb{Q}_U^H) = 0 \hspace{.5cm} \text{ for } j \geq -k+1 \text{ and } p \leq m.
\end{align}
Finally, on a point the graded de Rham functor $\gr_p^F \dr(-)$ agrees with the usual graded functor $\gr_p^F(-)$. This is the key vanishing which we exploit below.

Recall that cohomology with compact support is computed by $H_c^i(U, \mathbb{Q}) = \mathcal{H}^i a_{U!} \mathbb{Q}_U^H$. By duality, this gives $\mathbb{D} H_c^i(U, \mathbb{Q}) \simeq \mathcal{H}^{-i} a_{U*} \mathbb{D} \mathbb{Q}_U^H$. Consider the spectral sequence
\begin{align*}
    E_2^{a,b} = \mathcal{H}^a a_{U*} \mathcal{H}^b \mathbb{D} \mathbb{Q}_U^H \Rightarrow \mathcal{H}^{a+b} a_{U*} \mathbb{D} \mathbb{Q}_U^H \simeq \mathbb{D} H_c^{-a-b}(U, \mathbb{Q}).
\end{align*}

Since $U$ is affine, $a_{U*}$ is a right exact functor by the generalized version of the Artin vanishing theorem \cite[Theorem $4.1.1$]{bbd}. Hence $E_2^{a,b} = 0$ for $a > 0$. On the other hand, \eqref{eqn depth vanishing} gives $\gr_p^F E_2^{a,b} = \mathcal{H}^a \gr_p^F a_{U*} \mathcal{H}^b \mathbb{D} \mathbb{Q}_U^H = 0$ whenever $b \geq -k+1$ and $p \leq m$.

Together, these imply $\gr_p^F E_2^{a,b} = 0$ for all $a+b \geq -k+1$ and $p \leq m$. Therefore, the spectral sequence shows that $\gr_{-p}^F H_c^i(U, \mathbb{Q}) = 0$ for $i \leq k-1$ and $p \leq m$, as claimed.
\end{proof}

Applying Theorem~\ref{thm lefschetz} successively to general hyperplane
sections yields the following bound on the weights of the cohomology of
$X$.

\begin{corollary}\label{cor depth lefschetz}
Let $X$ be an equidimensional projective variety and set $s = \dim \mathrm{Sing}(X)$. If $\depth \underline{\Omega}_X^p + p \geq k$ for all $p \leq m$, then 
\[
    H^i(X, \mathbb{Q}) \text{ has weights } \geq \min \{ 2m+2, i\} \text{ for all } i < k-s.
\]
\end{corollary}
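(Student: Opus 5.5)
The plan is to induct on $s = \dim \mathrm{Sing}(X)$, cutting by general ample hyperplane sections and applying Theorem \ref{thm lefschetz} at each step. Fix $i < k - s$; we want $\gr_\ell^W H^i(X,\mathbb{Q}) = 0$ for $\ell < \min\{2m+2, i\}$. By Hodge symmetry of the mixed Hodge structure, $\gr_\ell^W H^i$ vanishes as soon as $\gr_F^p \gr_\ell^W H^i = 0$ for all $p$ with $\min\{p, \ell - p\} \leq m$. If $\ell < 2m+2$, every Hodge type $(p, \ell-p)$ has $\min\{p,\ell-p\} \leq m$. So it suffices to kill the Hodge numbers $h^{p,q}$ of weight $\ell = p+q < i$ with $\min\{p,q\}\leq m$; here $h^{p,q}$ refers to $\gr_F^p \gr^W_{p+q} H^i$, the convention under which Theorem \ref{thm lefschetz} is phrased.

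\textbf{Base case $s \le 0$.} Choose a general very ample divisor $D \subset X$ missing the (finitely many) singular points if $s=0$. Then $D$ is smooth and projective, so $H^j(D)$ is pure of weight $j$. Let $U = X \setminus D$. The hypothesis on depth localizes from $X$ to the open set $U$: depth of $\underline{\Omega}^p$ can only increase on restriction to an open subset, being defined stalkwise through local duality. Hence Theorem \ref{thm lefschetz} applies, and $H^{p,q}(X) \to H^{p,q}(D)$ is injective for $p+q \leq k-1$ and $\min\{p,q-1\}\le m$, and an isomorphism for $p+q\le k-2$ with $\min\{p,q\}\le m$.

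I will use the injectivity in the form where the restriction $H^i(X) \to H^i(D)$ is injective on the relevant graded Hodge pieces for $i \le k-1$. Since $H^i(D)$ is pure of weight $i$ and morphisms of mixed Hodge structures are strict, the pieces of $H^i(X)$ of weight $<i$ with $\min\{p,q\}\le m$ must vanish. For $s = -1$ (smooth $X$) the statement is trivial.

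\textbf{Inductive step, $s \geq 1$.} Take a general hyperplane section $D$. It is again equidimensional, with $\dim \mathrm{Sing}(D) = s-1$, and by genericity (a Bertini-type statement for Du Bois complexes, as in \cite{parkpopa}) it satisfies $\depth \underline{\Omega}_D^p + p \geq k-1$ for $p\le m$. By induction, $H^i(D)$ has weights $\geq \min\{2m+2,i\}$ for $i < (k-1)-(s-1) = k-s$. For $i<k-s\le k-1$, Theorem \ref{thm lefschetz} on $U = X\setminus D$ gives injectivity of $H^i(X)\to H^i(D)$ on the Hodge pieces with $\min\{p,q\}\le m$ (adjusting via the $q-1$ clause when needed). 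Strictness then transfers the weight bound from $D$ to $X$.

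\textbf{Main obstacle.} The delicate step is the bookkeeping in translating the $(p,q)$-wise statement of Theorem \ref{thm lefschetz}, with its two slightly different ranges, into the weight statement. This requires showing that every low-weight Hodge piece of $H^i(X)$ lands in the injectivity range, and in particular handling the edge case $i = k-s-1$ with $q$ near $m$ by combining Hodge symmetry with the $\min\{p,q-1\}$ clause. The Bertini property for the depth of Du Bois complexes of a general hyperplane section is the other input that needs justification.
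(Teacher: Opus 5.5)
Your proposal is correct and is essentially the paper's argument. Depth only improves on open subsets and drops by at most one on a general hyperplane section; Theorem~\ref{thm lefschetz} then gives injectivity of $\gr_F^p H^i(X)\to\gr_F^p H^i(D)$ for $p\le m$ and $i<k-s$; and you conclude from purity of the smooth section, strictness, and Hodge symmetry on $\gr^W_\ell H^i(X)$ for $\ell\le 2m+1$. Your induction on $s$ is just the paper's composite of $s+1$ injections $H^{p,q}(X)\hookrightarrow H^{p,q}(Z)$ taken one step at a time.

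The bookkeeping you flag as the main obstacle is not actually one. In Theorem~\ref{thm lefschetz}, $H^{p,q}(X)$ means $\gr_F^p H^{p+q}(X)$, so $p+q$ is the cohomological degree rather than a weight. The clause $p\le m$ alone gives the injectivity you need, Hodge symmetry on each pure $\gr^W_\ell$ kills the remaining Hodge types, and the $\min\{p,q-1\}$ clause is never used.
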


\begin{proof}
Note first that if $U \subset X$ is an open subset, then $\depth \underline{\Omega}_U^p \geq \depth \underline{\Omega}_X^p$. Moreover, if $D \subset X$ is a general hyperplane section, then $\depth \underline{\Omega}_X^p + p \geq k$ for $p \leq m$ implies that 
\[
    \depth \underline{\Omega}_D^p+p\geq k-1
    \qquad\text{for all } p \leq m
\]
by \cite[Lemma $8.4$]{burke}. 

Let $Z$ be the intersection of $s+1$ general hyperplane sections of $X$. Applying Theorem~\ref{thm lefschetz} repeatedly, we obtain an injection
\begin{align*}
    H^{p,q}(X) \hookrightarrow H^{p,q}(Z) \hspace{.5cm} \text{ for } p+q<k-s \text{ and } p \leq m.
\end{align*}
Since $Z$ is smooth, its cohomology is pure. By strictness of the Hodge filtration, it follows that
\begin{align*}
    \gr_{-p}^F W_{i-1} H^i(X, \mathbb{Q}) = 0 \hspace{.5cm} \text{ for } i<k-s \text{ and } p \leq m.
\end{align*}
Now suppose that $2m+2 \leq i < k-s$. Since $W_{2m+1}H^i(X,\mathbb{Q})\subseteq W_{i-1}H^i(X,\mathbb{Q})$, we obtain
\begin{align*}
    \gr_{-p}^F W_{2m+1}H^i(X,\mathbb{Q})=0
    \qquad\text{for all } p\leq m.
\end{align*}
By duality, the vanishing of these Hodge-graded pieces forces $W_{2m+1}H^i(X,\mathbb{Q})=0$.
\end{proof}

We isolate the following consequence of Corollary \ref{cor depth lefschetz} for reference.

\begin{corollary}\label{cor O depth}
Let $X$ be an equidimensional projective variety and set $s = \dim \mathrm{Sing}(X)$. If $\depth \mathcal{O}_X \geq k$, then 
\[
    H^i(X, \mathbb{Q}) \text{ has weights } \geq \min \{ 2, i\} \text{ for all } i < k-s.
\]
\end{corollary}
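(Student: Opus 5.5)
This follows directly from Corollary \ref{cor depth lefschetz} with $m = 0$. The plan is to convert the depth hypothesis on $\mathcal{O}_X$ into the depth hypothesis on $\underline{\Omega}_X^0$ that the corollary requires. Once we know $\depth \underline{\Omega}_X^0 \geq k$, Corollary \ref{cor depth lefschetz} with $m = 0$ gives at once that $H^i(X,\mathbb{Q})$ has weights $\geq \min\{2, i\}$ for all $i < k-s$.

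The key step is therefore the inequality
\[
\depth \underline{\Omega}_X^0 \geq \depth \mathscr{O}_X.
\]
For this I would use that the natural map $\mathscr{O}_X \to \underline{\Omega}_X^0$ is split in a suitable sense, at least on cohomology. This is a Kov\'acs-type splitting statement.

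A more robust route goes through local cohomology. The map $H^j_x(X,\mathscr{O}_X) \to H^j_x(X,\underline{\Omega}_X^0)$ is surjective for every point $x$ and every $j$; this is the local form of the Du Bois surjectivity theorem of Kov\'acs--Schwede. If $\depth \mathscr{O}_{X,x} \geq k$, the source vanishes for $j < k$, adjusted for the dimension of $\overline{\{x\}}$ as in the standard definition of depth of a complex. Hence the target vanishes in the same range, and so $\depth \underline{\Omega}_X^0 \geq k$. This agrees with the definition $\depth \underline{\Omega}^0_X = -\max\{j : \mathcal{H}^j\mathbb{D}\underline{\Omega}_X^0\neq 0\}$ used in the proof of Theorem \ref{thm lefschetz}. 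Dually, the statement is that $\mathbb{D}\underline{\Omega}_X^0 \to \mathbb{D}\mathscr{O}_X = \omega_X^\bullet$ is injective on cohomology sheaves, which is the form proved by Kov\'acs--Schwede. In the notation of \cite[\S 4]{burke} it is presumably already recorded there.

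The main obstacle is pinning down this surjectivity or injectivity statement precisely in the form needed. Once it is in hand, the rest is immediate. With $m = 0$ the weight bound $\min\{2m+2, i\}$ becomes $\min\{2, i\}$, and the range $i < k-s$ is inherited unchanged, so the result follows.
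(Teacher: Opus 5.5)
Your proposal is correct and is the same as the paper's proof. The paper applies Corollary~\ref{cor depth lefschetz} with $m=0$, using the Kov\'acs--Schwede inequality $\depth \underline{\Omega}_X^0 \geq \depth \mathscr{O}_X$, which it simply cites; this is equivalent to the injectivity $\mathcal{H}^j\mathbb{D}\underline{\Omega}_X^0 \hookrightarrow \mathcal{H}^j\omega_X^\bullet$ you describe. Drop your first suggested "splitting" route: a left inverse of $\mathscr{O}_X \to \underline{\Omega}_X^0$ exists only for Du Bois singularities, so it is not available in general, whereas the local cohomology surjectivity you settle on is the right statement.
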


\begin{proof}
This follows from Corollary~\ref{cor depth lefschetz}, together with the
inequality \( \depth \underline{\Omega}_X^0 \geq \depth \mathcal O_X \) of \cite[Theorem~3.3]{kovacs schwede}.
\end{proof}

For comparison, there is a stronger weight bound available
for varieties with rational singularities. This bound follows from Hodge
symmetry \cite[Theorem~A]{parkpopa} and was observed in
\cite[Corollary~1.7]{bps}.

\begin{lemma}\cite[Corollary~1.7]{bps}\label{lem weight restrictions}
Let $X$ be a projective variety with rational singularities. Then, for every
$i\geq 0$, the cohomology group \(H^i(X,\mathbb Q)\) has weights
\(\geq \min\{2,i\}\).
\end{lemma}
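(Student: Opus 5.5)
The plan is to reduce to Hodge symmetry for rational singularities together with the purity of low-weight pieces. For a projective variety, $H^i(X,\mathbb{Q})$ carries a mixed Hodge structure with weights in $[0,i]$. We must show $\gr_w^W H^i(X,\mathbb{Q}) = 0$ for $w < \min\{2,i\}$. For $i = 0$ there is nothing to prove. For $i = 1$ we need only $W_0 H^1 = 0$. For $i \geq 2$ we need $W_0 H^i = 0$ and $\gr_1^W H^i = 0$. So the statement splits into two vanishing claims: $\gr_0^W H^i = 0$ for $i \geq 1$, and $\gr_1^W H^i = 0$ for $i \geq 2$.

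For the weight-zero part, the first step uses that $\gr_0^W H^i$ is a Hodge structure of weight zero, hence of type $(0,0)$. It is therefore detected by $\gr_F^0$, that is, by $\gr^0_F \gr^W_0 H^i(X,\mathbb{C})$. By Proposition \ref{prop structure} with $j = 0$, this is $H^i(RH^0(X,\mathscr{O}))$. Rationality of $X$ gives $Rf_*\mathscr{O}_Y \simeq \mathscr{O}_X \simeq \underline{\Omega}_X^0$, so $H^i(X,\underline{\Omega}^0_X) \simeq H^i(Y,\mathscr{O}_Y)$ for a resolution $f: Y \to X$. This identifies $\gr_F^0 H^i(X,\mathbb{C})$ with the pure weight-$i$ space $H^i(Y,\mathscr{O}_Y)$.

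More precisely, the map $H^i(X) \to H^i(Y)$ induces an isomorphism on $\gr_F^0$, by the Du Bois description $\gr_F^0 H^i(X,\mathbb{C}) = \mathbb{H}^i(X,\underline{\Omega}^0_X)$. Since $H^i(Y)$ is pure of weight $i$, it follows that $\gr_F^0\gr_w^W H^i(X) = 0$ for every $w < i$. In particular, for $i \geq 1$ the $(0,0)$-part of $\gr_0^W$ vanishes, so $\gr_0^W H^i = 0$.

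For weight one with $i \geq 2$, the same argument kills $\gr_F^0 \gr_1^W H^i$, which is the $(0,1)$-part. By Hodge symmetry of the pure weight-one structure $\gr_1^W H^i$, its $(1,0)$-part has the same dimension and hence also vanishes. Since these are the only types in weight one, $\gr_1^W H^i = 0$.

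The main obstacle is the precise justification that $H^i(X) \to H^i(Y)$ induces an isomorphism on $\gr_F^0$. This should follow from functoriality of the Du Bois complex, which gives $\gr^0_F H^i(X) = \mathbb{H}^i(\underline{\Omega}^0_X)$, together with $\underline{\Omega}^0_X \to Rf_*\mathscr{O}_Y$ being a quasi-isomorphism for rational singularities. Strictness of morphisms of mixed Hodge structures with respect to $F$ and $W$ then transfers the vanishing to the graded pieces. Alternatively, one can cite the Hodge symmetry statement \cite[Theorem~A]{parkpopa} directly, as the paper indicates.
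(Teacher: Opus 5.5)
Your argument is correct, and it is a self-contained version of what the paper does by citation. The paper deduces the lemma from the Hodge symmetry theorem of Park--Popa. The symmetry that drives it is $\dim \gr_F^0 H^i(X,\mathbb{C}) = \dim \gr_F^i H^i(X,\mathbb{C})$. Since $\gr_F^i H^i$ only sees the $(i,0)$-part of $\gr^W_i H^i$, this equality is equivalent, by strictness and Hodge symmetry of the pure graded pieces, to $\gr_F^0 W_{i-1} H^i(X,\mathbb{C}) = 0$. That vanishing is exactly what you derive from three inputs: the quasi-isomorphism $\underline{\Omega}^0_X \to Rf_*\mathscr{O}_Y$, the resulting injectivity of $\gr_F^0 f^*$, and the purity of $H^i(Y,\mathbb{Q})$.

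Your route uses only that rational singularities are Du Bois with $\underline{\Omega}^0_X \simeq Rf_*\mathscr{O}_Y$, together with standard mixed Hodge theory. The paper's citation is shorter, but it invokes a more general symmetry theorem of which only this $p=0$ case is needed.

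Two small remarks. First, the detour through Proposition~\ref{prop structure} is never used and can be dropped. Second, your claim that a weight-zero (resp.\ weight-one) graded piece is of type $(0,0)$ (resp.\ $(1,0)+(0,1)$) relies on Deligne's result that the Hodge numbers $h^{p,q}$ of $\gr^W_w H^i$ of a complex variety vanish unless $p,q \geq 0$. You should state this explicitly.
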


We need Corollary \ref{cor O depth} and Lemma \ref{lem weight restrictions} in the body of the paper to prove Proposition \ref{intro-prop weight zero}.

\end{document}